\newcommand{\bb}{\mathbf{B}}
\newcommand{\be}{\mathbf{E}}
\newcommand{\bp}{\mathbf{P}}
\newcommand{\bx}{{\bf x}}
\newcommand{\der}{\delta}
\newcommand{\di}{\diamond}
\newcommand{\dom}{\mbox{Dom}}
\newcommand{\ka}{\kappa}
\newcommand{\id}{\mbox{Id}}
\newcommand{\ist}{\int_{s}^{t}}
\newcommand{\norm}[1]{\lVert #1\rVert}
\newcommand{\ott}{[0,T]}
\newcommand{\xd}{\mathbf{x}^{\mathbf{2}}}
\newcommand{\1}{{\bf 1}}
\newcommand{\2}{{\bf 2}}
\def\hatotimes{{\hat {\otimes}}}
\def\sumq{\sum_{q=0}^{n-1}}
\newcommand{\bk}{\mathbf{k}}
\newcommand{\bl}{\mathbf{l}}
\newcommand{\bm}{\mathbf{m}}
\newcommand{\bn}{\mathbf{n}}
\newcommand{\bq}{\mathbf{q}}
\newcommand{\bnn}{\mathbf{N}}
\def\EE{\mathbf{E}}
\newcommand{\R}{\mathbb R}
\newcommand{\N}{\mathbb N}
\newcommand{\cb}{\mathcal B}
\newcommand{\cac}{\mathcal C}
\newcommand{\ce}{\mathcal E}
\newcommand{\cf}{\mathcal F}
\newcommand{\ch}{\mathcal H}
\newcommand{\cj}{\mathcal J}
\newcommand{\cl}{\mathcal L}
\newcommand{\cn}{\mathcal N}
\newcommand{\cq}{\mathcal Q}
\newcommand{\cs}{\mathcal S}
\newcommand{\ct}{\mathcal T}
\newcommand{\cz}{\mathcal Z}
\newcommand{\al}{\alpha}
\newcommand{\ep}{\varepsilon}
\newcommand{\ga}{\gamma}
\newcommand{\la}{\lambda}
\newcommand{\laa}{\Lambda}
\newcommand{\oom}{\Omega}
\newcommand{\si}{\sigma}
\newcommand{\vp}{\varphi}
\newcommand{\ze}{\zeta}
\newcommand{\lp}{\left(}
\newcommand{\rp}{\right)}
\newcommand{\lc}{\left[}
\newcommand{\rc}{\right]}
\newcommand{\lcl}{\left\{}
\newcommand{\rcl}{\right\}}
\newcommand{\lln}{\left|}
\newcommand{\rrn}{\right|}
\newcommand{\lla}{\left\langle}
\newcommand{\rra}{\right\rangle}
\newcommand{\fin}
{ \vspace{-0.6cm}
\begin{flushright}
\mbox{$\Box$}
\end{flushright}
\noindent }
\newtheorem{theorem}{Theorem}[section]
\newtheorem{corollary}[theorem]{Corollary}
\newtheorem{definition}[theorem]{Definition}
\newtheorem{hypothesis}[theorem]{Hypothesis}
\newtheorem{lemma}[theorem]{Lemma}
\newtheorem{proposition}[theorem]{Proposition}
\theoremstyle{remark}
\newtheorem{remark}[theorem]{Remark}
\theoremstyle{remark}
\newtheorem{example}[theorem]{Example}
\newcommand{\bean}{\begin{eqnarray*}}
\newcommand{\eean}{\end{eqnarray*}}
\newcommand{\ben}{\begin{enumerate}}
\newcommand{\een}{\end{enumerate}}
\newcommand{\beq}{\begin{equation}}
\newcommand{\eeq}{\end{equation}}
\newcommand{\iott}{\int_0^T}
\newcommand{\hnj}{H_n\big(x(\varphi)\big)}
\newcommand{\hnju}{H_{n-1}\big(x(\varphi)\big)}
\newcommand{\crj}{{}^{[j]}}
\newcommand{\crl}{{}^{[l]}}
\newcommand{\cald}{\mathcal D_T}
\newcommand{\gc}{{\rm(GC)\,}}
\newcommand{\ddi}{\delta^\diamond}
\newcommand{\intd}{\int_{\R^d}}
\newcommand{\gf}[1]{G_\vp^{j_#1}}
\newcommand{\party}[1]{\frac{\partial^{#1}}{\partial y_{j_1}\cdots\partial y_{j_{#1}}}}
\begin{document}

\title[Stochastic calculus for Gaussian processes]{On Stratonovich and Skorohod stochastic calculus for Gaussian processes}

\date{\today}

\author{Yaozhong Hu \and Maria Jolis \and Samy Tindel}
\date{\today}
\begin{abstract}
In this article, we derive a Stratonovich and Skorohod type change of variables formula for a multidimensional Gaussian process
 with low H\"older regularity $\gamma$ (typically $\ga\le 1/4$). To this aim,
  we combine tools from rough paths theory and stochastic analysis.
\end{abstract}

\address{Yaozhong Hu, Department of Mathematics, University of Kansas, Lawrence, Kansas, 66045 USA.}
\email{hu@math.ku.edu}

\address{Maria Jolis, Departament de Matem\`atiques, Facultat de Ci\`encies, Edifici C, Universitat Aut\`onoma de Barcelona, 08193 Bellaterra, Spain.}
\email{mjolis@mat.uab.cat}

\address{Samy Tindel, Institut \'{E}lie Cartan Nancy, Universit\'e de Nancy 1, B.P. 239,
54506 Vand{\oe}uvre-l\`{e}s-Nancy Cedex, France.}
\email{tindel@iecn.u-nancy.fr}

\thanks{S. Tindel is partially supported by the (French) ANR grant ECRU. M.
Jolis is partially supported by grant MTM2009-08869 Ministerio de
Ciencia e Innovaci\'{o}n and FEDER}

\subjclass[2000]{Primary 60H35; Secondary 60H07, 60H10, 65C30}
\date{\today}
\keywords{Gaussian processes, rough paths, Malliavin calculus, It\^{o}'s formula}

\maketitle

\section{Introduction}
\label{sec:intro}

Starting from the seminal paper \cite{DU}, the stochastic calculus for Gaussian processes has been thoroughly studied during the last decade, fractional Brownian motion being the main example of application of the general results. The literature on the topic includes the case of Volterra processes corresponding to a fBm with Hurst parameter $H> 1/4$ (see \cite{AMN,GRV}), with some extensions to the whole range  $H\in(0,1)$ as in \cite{CH,De,GNRV}. It should be noticed that all those contributions concern the case of real valued  processes,  this feature being an important aspect of the computations.

\smallskip

In a parallel and somewhat different way, the rough path analysis opens the possibility of a pathwise type stochastic calculus for general (including Gaussian) stochastic processes. Let us recall that this theory, initiated by T. Lyons in \cite{Ly1} (see also \cite{FV-bk,LQ-bk,Gu} for  introductions to the topic),
  states that if a $\ga$-H\"{o}lder process $x$ allows to define sufficient number of  iterated integrals   then:
\begin{enumerate}
\item
One gets a Stratonovich type change of variable for $f(x)$ when $f$ is smooth enough.
\item
Differential equations driven by $x$ can be reasonably defined and solved.
\end{enumerate}
In particular, the rough path method is still the only way to solve differential equations driven by Gaussian processes
with H\"older regularity exponent less  than $1/2$,  except for some very particular (e.g. Brownian, linear or one-dimensional) situations.

\smallskip

More specifically, the rough path theory relies on the following set of assumptions:
\begin{hypothesis}\label{hyp:rough-intro}
Let $\ga\in(0,1)$ and $x:\ott\to\R^d$ be a $\ga$-H\"{o}lder process. Consider also the  $n\textsuperscript{th}$ order simplex $\cs_{n,T}=\{ (u_1,\dots, u_n): 0\le u_1 < \cdots < u_n \le T\}$ on $[0,T]$. The process $x$ is supposed to generate a rough path,
 which can be understood as a stack $\{\bx^{\bn};\, n\le \lfloor 1/\ga\rfloor\}$ of functions of two variables satisfying the following three properties:

\smallskip

\noindent
\textit{(1)} \textit{Regularity:}
Each component of $\mathbf{x}^{\mathbf{n}}$ is $n\ga$-H\"{o}lder continuous
(in the sense of the H\"older norm introduced in (\ref{eq:def-norm-C2}))    for all $n\le \lfloor 1/\ga\rfloor$, and $\mathbf{x}^{\mathbf{1}}_{st}=x_t-x_s$.

\smallskip

\noindent
\textit{(2)}
{\it Multiplicativity}:
Letting $(\der \mathbf{x}^{\mathbf{n}})_{sut}:=\mathbf{x}^{\mathbf{n}}_{st}-\mathbf{x}^{\mathbf{n}}_{su}-\mathbf{x}^{\mathbf{n}}_{ut}$ for $(s,u,t)\in\cs_{3,T}$, one requires
\begin{equation}\label{eq:multiplicativity}
(\der \mathbf{x}^{\mathbf{n}})_{sut}(i_1,\ldots,i_n)=\sum_{n_1=1}^{n-1}
\mathbf{x}_{su}^{\mathbf{n_1}}(i_1,\ldots,i_{n_1}) \mathbf{x}_{ut}^{\mathbf{n-n_1}}(i_{n_1+1},\ldots,i_n).
\end{equation}

\smallskip

\noindent
\textit{(3)}  \textit{Geometricity:}
For any $n,m$ such that $n+m\le \lfloor 1/\ga\rfloor$and $(s,t)\in\cs_{2,T}$, we have:
\beq\label{eq:geom-rough-path}
\bx^{\bn}_{st}(i_1,\ldots,i_n) \, \bx^{\bm}_{st}(j_1,\ldots,j_m)
=\sum_{\bar k\in\mbox{{\tiny Sh}}(\bar\imath,\bar\jmath)}
\bx^{\bn+\bm}_{st}(k_1,\ldots,k_{n+m}),
\eeq
where, for two tuples $\bar\imath,\bar\jmath$, $\Sigma_{(\bar\imath,\bar\jmath)}$ stands for the set of permutations of the indices contained in $(\bar\imath,\bar\jmath)$, and $\mbox{Sh}(\bar\imath,\bar\jmath)$ is a subset of $\Sigma_{(\bar\imath,\bar\jmath)}$ defined by:
$$
\mbox{Sh}(\bar\imath,\bar\jmath)=
\lcl  \si\in \Sigma_{(\bar\imath,\bar\jmath)}; \,
\si \mbox{ does not change the orderings of } \bar\imath \mbox{ and } \bar\jmath \rcl.
$$
\end{hypothesis}
With this set of abstract assumptions in hand, one can define integrals like $\int f(x)\, dx$ in a natural way (as recalled later in the article), and more generally set up the basis of a differential calculus with respect to $x$. Notice that according to T. Lyons terminology~\cite{LQ-bk}, the family $\{\bx^{\bn};\, n\le \lfloor 1/\ga\rfloor\}$ is said to be a weakly geometric rough path above $x$.

\smallskip

Without any surprise, some substantial efforts have been made in the last past years in order to construct rough paths above a wide class of Gaussian processes, among which emerges the case of fractional Brownian motion. Let us recall that a fractional Brownian motion $B$ with Hurst parameter $H\in(0,1)$, defined on a complete probability space $(\oom,\cf,\bp)$, is a $d$-dimensional centered Gaussian process. Its law is thus characterized by its covariance function, which is given by
\begin{equation}\label{eq:cov-fBm}
\be \lc B_t(i)B_s(i) \rc= \frac 12 \lp t^{2H} + s^{2H} - |t-s|^{2H}  \rp \, \1_{(i=j)},
\qquad s,t\in\R_+.
\end{equation}
The variance of the   increments of $B$ is  then given by
$$
\be\lc  \lp  B_t(i)-B_s(i) \rp^2\rc = (t-s)^{2H}, \qquad (s,t)\in\cs_{2,T}, \quad i=1,\ldots, d,
$$
and this implies that  almost surely the trajectories of the fBm are $\gamma$-H\"{o}lder
 continuous for any $\gamma<H$. Furthermore, for $H=1/2$, fBm coincides with the usual Brownian motion,
  converting the family $\{B=B^H;\, H\in(0,1)\}$ into the most natural generalization of this classical process.
  This is why $B$ can be considered as one of the canonical examples of application of the abstract rough path theory.

\smallskip

Until very recently, the rough path constructions for fBm were based on pathwise
type approximations of $B$, as in \cite{CQ,NTU,Un}. Namely, these references all use an
approximation of $B$ by a regularization $B^{\ep}$, consider the associated (Riemann)
iterated integrals $\bb^{\bn,\ep}$ and show their convergence, yielding the existence of a geometric rough path above $B$.
 These approximations all fail for $H\le 1/4$. Indeed, the oscillations of $B$ are then too heavy to define even $\bb^{\2}$
  following this kind of argument, as illustrated by \cite{DNN}. Nevertheless, the article \cite{LV} asserts that a rough path exists above any $\ga$-H\"{o}lder function, and the recent progresses  \cite{NT,Un} show that different concrete rough paths above fBm (and more general processes) can be exhibited, even if those rough paths do not correspond to a regularization of the process at stake.

\smallskip

Summarizing what has been said up to now, there are (at least) two
ways to handle stochastic calculus for Gaussian processes: (i)
Stochastic analysis tools, mainly leading to a Skorohod type
integral (ii) Rough paths analysis, based on the pathwise
convergence of some Riemann sums  and giving rise to a Stratonovich
type integral. Though some efforts  have  been made in \cite{Co} in
order to relate the two approaches (essentially for a fBm with Hurst
parameter $H>1/4$), the current article proposes to delve deeper
into this direction. Namely, we plan to  tackle three different
problems:

\smallskip

\noindent
\textbf{(1)} We show that, starting from a given rough path of order $N$ above a $d$-dimensional process $x$, one can derive a Stratonovich change of variables of the form
\begin{equation}\label{eq:strato-intro}
f(x_t)-f(x_s)= \sum_{i=1}^{d} \int_s^t \partial_{i}f(x_u) \, dx_u(i)
:=\cj_{st}\lp  \nabla f(x_u) \, dx_u \rp,
\end{equation}
for any $f\in C^{N+1}(\R^d;\R)$,
 and where $\partial_{i}f$ stands for $\partial
f/\partial x_i$. This formula is not new, and is in fact an
immediate consequence of the powerful stability theorems which can
be derived from the abstract rough paths theory (see e.g
\cite{FV-bk}). However, we have included these considerations here
for several reasons: \emph{(i)} This paper not being dedicated to
rough paths specialists, we find it useful to include a self
contained, short and simple enough introduction to equation
(\ref{eq:strato-intro}) \emph{(ii)} Our proof is slightly different
from the original one, in the sense that we only rely on the
algebraic and analytic assumptions  of Hypothesis
\ref{hyp:rough-intro} rather than on a limiting procedure
\emph{(iii)} Proving~(\ref{eq:strato-intro}) is also a way for us to
introduce all the objects and structures needed later on for the
Skorohod type calculus. In particular, we derive the following
representation for the integral $\cj_{st}(  \nabla f(x_u) \, dx_u
)$: consider
 a family of partitions $\Pi_{st} = \{s=t_0,\dots,t_n=t\}$ of $[s,t]$, whose mesh tends to 0.
 Then, denoting by $N=\lfloor\frac1{\ga}\rfloor$,
\begin{equation}\label{eq:riemann-sums-intro}
\cj_{st}\lp  \nabla f(x_u) \, dx_u \rp
=\lim_{|\Pi_{st}|\to 0}\sum_{q=0}^{n-1} \sum_{k=0}^{N-1}
\frac{1}{k!}\partial_{i_k\ldots i_1 i}^{k+1}f(x_{t_q}) \,
\bx^{\1}_{t_{q}t_{q+1}}(i_k)\,\cdots\,\bx^{\1}_{t_{q}t_{q+1}}(i_1)\,\bx^{\1}_{t_{q}t_{q+1}}(i).
\end{equation}
These modified Riemann sums will also be essential in the analysis of Skorohod type integrals.

\smallskip

\noindent
\textbf{(2)}
We then specialize our considerations to a Gaussian setting, and use Malliavin calculus tools (in particular some elaborations of~\cite{CH,De}). Namely, supposing that $x$ is a Gaussian process, plus mild additional assumptions on its covariance function, we are able to prove the following assertions: \\
\emph{(i)} Consider a  $C^2(\R^d;\R)$ function $f$ with exponential growth, and $0\le s <t<\infty$. Then the function $u\mapsto \1_{[s,t)}(u)\nabla f(x_u)$ lies into the domain of an extension of the divergence operator (in the Malliavin calculus sense) called $\der^{\di}$. \\
\emph{(ii)} The following Skorohod type formula holds true:
\begin{equation}\label{eq:sko-intro}
f(x_t)-f(x_s)= \ddi\lp \1_{[s,t)}\,\nabla f(x)\rp + \frac12\int_s^t \Delta f(x_u)\,R'_u \,du,
\end{equation}
where $\Delta$ stands for the Laplace operator, $u\mapsto R_u:=\be[|x_u(1)|^2]$ is assumed to be a differentiable function, and $R'$ stands for its derivative.\\
It should be emphasized here that formula (\ref{eq:sko-intro}) is obtained by means of stochastic analysis methods only, independently
of the H\"{o}lder regularity of $x$. Otherwise stated, as in many instances of Gaussian analysis, pathwise regularity can be replaced by a regularity
 on the underlying Wiener space. When both, regularity of the paths
 and on the underlying Wiener space, are satisfied we obtain the relation between the
 Stratonovich type integral and the extended divergence operator.

Let us mention at this point the recent work \cite{KR} that
considers  similar problems as ours. In that article, the authors
define also an extended divergence type operator for 
Gaussian processes (in the one-dimensional case only) with very irregular  covariance and study its
relation with a Stratonovich type integral. For the definition of
the extended divergence, some conditions on the distributional
derivatives of the covariance function $R$ are imposed,  one of them
being that $\partial_{st}^{2} R_{st}$ satisfies
that $\bar\mu(ds,dt):=\partial_{st}^{2} R_{st}\,(t-s)$ (that is well defined) is the difference of two
Radon measures. Our conditions on $R$ are of different nature, we
suppose more regularity but only for the first partial derivative of
$R$ and the variance function. On the other hand, the definition of the Stratonovich type integral in~\cite{KR} is obtained through a regularization approach instead of rough paths theory. As a consequence, some additional regularity conditions on the Gaussian process have to be imposed, while we just rely on the existence of a rough path above $x$.

\smallskip

\noindent
\textbf{(3)}
Finally, one can relate the two stochastic integrals introduced so far by means of modified Wick-Riemann sums. Indeed, we shall show that the integral $\ddi\lp \1_{[s,t)}\,\nabla f(x)\rp$ introduced at relation (\ref{eq:sko-intro}) can also be expressed as
\begin{equation}\label{eq:wick-riemann-intro}
\ddi\lp \1_{[s,t)}\,\nabla f(x)\rp=
\lim_{|\Pi_{st}|\to 0}\sum_{q=0}^{n-1} \sum_{k=0}^{N-1}
\frac{1}{k!}\partial_{i_k\ldots i_1 i}^{k+1}f(x_{t_q}) \di
\bx^{\1}_{t_{q}t_{q+1}}(i_k)\di \cdots\,\bx^{\1}_{t_{q}t_{q+1}}(i_1)\di \bx^{\1}_{t_{q}t_{q+1}}(i),
\end{equation}
where the (almost sure) limit is still taken along a family of
partitions $\Pi_{st} = \{s=t_0,\dots,t_n=t\}$ of $[s,t]$ whose mesh
tends to 0, and where $\di$ stands for the usual Wick product of
Gaussian analysis. This result can be seen as the main contribution
of our paper, and is obtained by a combination of rough paths and
stochastic analysis methods. Specifically, we have mentioned that
the modified Riemann sums in (\ref{eq:riemann-sums-intro}) can be
proved to be convergent by means of rough paths analysis. Our main
additional technical task will thus consist in computing the
correction terms between those Riemann sums and the Wick-Riemann
sums which appear in (\ref{eq:wick-riemann-intro}). This is the aim
of the general Proposition \ref{basic2} on Wick products, which has
an interest in its own right, and is the key ingredient of our
proof. It is worth mentioning at this point that Wick products are
usually introduced within the landmark of white noise analysis. We
rather rely here on the introduction given in \cite{HY}, using the
framework of Gaussian spaces. Let us also mention that Riemann-Wick  sums have been used in \cite{DHP} to study Skorohod
stochastic calculus with respect to (one-dimensional) fBm for $H$ greater than $1/2$, the case of $1/4<H\le 1/2$ being treated in \cite{NTa}. We go beyond these case in Theorem \ref{itosko-d}, and will go back to the link between our formulas and the one produced in \cite{NTa} at Section \ref{sec:comparison}.

\smallskip

In conclusion, this article is devoted to show that
 Stratonovich and Skorohod stochastic calculus are possible
  for a wide range of Gaussian processes. A link between the integrals corresponding
  to those stochastic calculus is made through the introduction of Riemann-Wick modified sums. On the other hand,
   the reader might have noticed that the integrands considered in our stochastic integrals are restricted to processes of the form $\nabla f(x)$. The symmetries of this kind of integrand simplify the analysis of the Stratonovich-Skorohod corrections, reducing all the calculations to corrections involving $\bx^{\1}$ only. An extension to more general integrands would obviously require a lot more in terms of Wick type computations, especially for the terms involving $\bx^{\bk}$ for $k\ge 2$, and is deferred to a subsequent publication.

\smallskip

Here is how our paper is  organized:  Section \ref{sec:one-dim}   recalls  some basic elements of rough paths theory which will be useful in the sequel. Then, as a warmup for the non initiated reader, we derive a Stratonovich change of variable formula in the case of a rough path of order 2 at Section \ref{sec:strato-order-2}. The case of a rough path of arbitrary order is then treated at Section \ref{sec:strato-order-N}. We obtain a Skorohod change of variable with Malliavin calculus tools only at Section \ref{sec:sko-mall}. Finally, the representation of this Skorohod integral by Wick-Riemann sums is performed at Section \ref{sec:rep-sko}.

\section{Some elements of algebraic integration}
\label{sec:one-dim}
As  already mentioned  in the introduction, our stochastic calculus will appeal to the algebraic integration theory, which is a variant of the rough paths theory introduced in~\cite{Gu}, and for which we also refer to \cite{GT} for a detailed introduction.

\subsection{Increments}\label{incr}

The extended pathwise integration we will deal with is based on the
notion of `increments', together with an elementary operator $\der$
acting on them. The algebraic structure they generate is described
in \cite{Gu,GT}, but here we present  directly the definitions of
interest for us, for sake of conciseness. First of all,  for an
arbitrary real number $T>0$, a vector space $V$ and an integer $k\ge
1$ we denote by $\cac_k(V)$ the set of functions $g : [0,T]^{k} \to
V$ such that $g_{t_1 \cdots t_{k}} = 0$ whenever $t_i = t_{i+1}$ for
some $i\le k-1$. Such a function will be called a
\emph{$(k-1)$-increment}, and we   set $\cac_*(V)=\cup_{k\ge
1}\cac_k(V)$. We can now define the announced elementary operator
$\der$ on $\cac_k(V)$:
\begin{equation}
 \label{eq:coboundary}
\delta : \cac_k(V) \to \cac_{k+1}(V), \qquad
(\delta g)_{t_1 \cdots t_{k+1}} = \sum_{i=1}^{k+1} (-1)^{k-i}
g_{t_1  \cdots \hat t_i \cdots t_{k+1}} ,
\end{equation}
where $\hat t_i$ means that this particular argument is omitted.
A fundamental property of $\der$, which is easily verified,
is that
$\delta \delta = 0$, where $\delta \delta$ is considered as an operator
from $\cac_k(V)$ to $\cac_{k+2}(V)$.
We    denote $\cz\cac_k(V) = \cac_k(V) \cap \text{Ker}\delta$
and $\cb \cac_k(V) =
\cac_k(V) \cap \text{Im}\delta$.

\vspace{0.3cm}

Some simple examples of actions of $\der$,
which will be the ones we will really use throughout the paper,
are obtained by letting
$g\in\cac_1$ and $h\in\cac_2$. Then, for any $s,u,t\in\ott$, we have
\begin{equation}
\label{eq:simple_application}
 (\der g)_{st} = g_t - g_s,
\quad\mbox{ and }\quad
(\der h)_{sut} = h_{st}-h_{su}-h_{ut}.
\end{equation}
Furthermore, it is easily checked that
$\cz \cac_{k+1}(V) = \cb \cac_{k}(V)$ for any $k\ge 1$.
In particular, the following basic property holds:
\begin{lemma}\label{exd}
Let $k\ge 1$ and $h\in \cz\cac_{k+1}(V)$. Then there exists a (non unique)
$f\in\cac_{k}(V)$ such that $h=\der f$.
\end{lemma}

\noindent{\it Proof}. This elementary proof is included in
\cite{Gu}, and will be omitted here. However, let us  mention that
$f_{t_1\ldots t_{k}}=(-1)^{k+1}h_{0t_1\ldots t_{k}}$ is a possible
choice. \fin

Observe that Lemma \ref{exd} implies that all the elements
$h \in\cac_2(V)$ such that $\der h= 0$ can be written as $h = \der f$
for some (non unique) $f \in \cac_1(V)$. Thus we get a heuristic
interpretation of $\der |_{\cac_2(V)}$:  it measures how much a
given 1-increment  is far from being an  exact increment of a
function, i.e., a finite difference.

\vspace{0.3cm}

Notice that our future discussions will mainly rely on
$k$-increments with $k \le 2$, for which we will make  some
analytical assumptions. Namely,
we measure the size of these increments by H\"older norms
defined in the following way: for $f \in \cac_2(V)$ let
\begin{equation}\label{eq:def-norm-C2}
\norm{f}_{\mu} =
\sup_{s,t\in\ott}\frac{|f_{st}|}{|t-s|^\mu},
\quad\mbox{and}\quad
\cac_2^\mu(V)=\lcl f \in \cac_2(V);\, \norm{f}_{\mu}<\infty  \rcl.
\end{equation}
Obviously, the usual H\"older spaces $\cac_1^\mu(V)$ will be determined
       in the following way: for a continuous function $g\in\cac_1(V)$, we simply set
\begin{equation}\label{def:hnorm-c1}
\|g\|_{\mu}=\|\der g\|_{\mu},
\end{equation}
and we will say that $g\in\cac_1^\mu(V)$ iff $\|g\|_{\mu}$ is finite.
Notice that $\|\cdot\|_{\mu}$ is only a semi-norm on $\cac_1(V)$,
but we will generally work on spaces of the type
\begin{equation}\label{def:hold-init}
\cac_{1,a}^\mu(V)=
\lcl g:\ott\to V;\, g_0=a,\, \|g\|_{\mu}<\infty \rcl,
\end{equation}
for a given $a\in V$, on which $\|g\|_{\mu}$ defines a distance in
the usual way.
For $h \in \cac_3(V)$ set in the same way
\begin{eqnarray}
 \label{eq:normOCC2}
 \norm{h}_{\gamma,\rho} &=& \sup_{s,u,t\in\ott}
\frac{|h_{sut}|}{|u-s|^\gamma |t-u|^\rho}\\
\|h\|_\mu &= &
\inf\left \{\sum_i \|h_i\|_{\rho_i,\mu-\rho_i} ;\, h =
\sum_i h_i,\, 0 < \rho_i < \mu \right\} ,\nonumber
\end{eqnarray}
where the last infimum is taken over all sequences $\{h_i \in \cac_3(V) \}$
such that $h
= \sum_i h_i$ and for all choices of the numbers $\rho_i \in (0,z)$.
Then  $\|\cdot\|_\mu$ is easily seen to be a norm on $\cac_3(V)$, and we set
$$
\cac_3^\mu(V):=\lcl h\in\cac_3(V);\, \|h\|_\mu<\infty \rcl.
$$
Eventually,
let $\cac_3^{1+}(V) = \cup_{\mu > 1} \cac_3^\mu(V)$,
and notice  that the same kind of norms can be considered on the
spaces $\cz \cac_3(V)$, leading to the definition of some spaces
$\cz \cac_3^\mu(V)$ and $\cz \cac_3^{1+}(V)$.

\vspace{0.3cm}

With these notations in mind
the following proposition is a basic result, which  belongs to  the core of
our approach to pathwise integration. Its proof may be found
in a simple form in \cite{GT}.
\begin{proposition}[The $\Lambda$-map]
\label{prop:Lambda}
There exists a unique linear map $\Lambda: \cz \cac^{1+}_3(V)
\to \cac_2^{1+}(V)$ such that
$$
\delta \Lambda  = \id_{\cz \cac_3^{1+}(V)}
\quad \mbox{ and } \quad \quad
\Lambda  \delta= \id_{\cac_2^{1+}(V)}.
$$
In other words, for any $h\in\cac^{1+}_3(V)$ such that $\der h=0$
there exists a unique $g=\laa(h)\in\cac_2^{1+}(V)$ such that $\der g=h$.
Furthermore, for any $\mu > 1$,
the map $\laa$ is continuous from $\cz \cac^{\mu}_3(V)$
to $\cac_2^{\mu}(V)$ and we have
\begin{equation}\label{ineqla}
\|\Lambda h\|_{\mu} \le \frac{1}{2^\mu-2} \|h\|_{\mu} ,\qquad h \in
\cz \cac^{\mu}_3(V).
\end{equation}
\end{proposition}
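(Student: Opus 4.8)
The plan is to recast the statement as the assertion that $\der$ restricts to a bijection $\der:\cac_2^{1+}(V)\to\cz\cac_3^{1+}(V)$ with a bounded inverse, which we will call $\Lambda$. I would prove \emph{(a)} that $\der$ is injective on $\cac_2^{1+}(V)$ and \emph{(b)} that every $h\in\cz\cac_3^{1+}(V)$ admits a preimage $g\in\cac_2^{1+}(V)$ satisfying the stated bound; setting $\Lambda h:=g$ then gives $\der\Lambda=\id$ and the estimate by construction. The remaining identity $\Lambda\der=\id$ comes for free: for $g\in\cac_2^{1+}(V)$ one has $\der g\in\cz\cac_3^{1+}(V)$ (a routine consequence of the definitions), so $\der(\Lambda\der g)=\der g$, and injectivity forces $\Lambda\der g=g$. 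Linearity of $\Lambda$ is likewise immediate from injectivity.

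For \emph{(a)} I would argue by rigidity. If $g\in\cac_2^{\mu}(V)$ with $\mu>1$ satisfies $\der g=0$, then $g\in\cz\cac_2(V)=\cb\cac_1(V)$, so by Lemma \ref{exd} there is $f\in\cac_1(V)$ with $g=\der f$. By the definition of the $\cac_1$-seminorm, $\|f\|_{\mu}=\|\der f\|_{\mu}=\|g\|_{\mu}<\infty$ with $\mu>1$, hence $|f_t-f_s|\le\|g\|_{\mu}|t-s|^{\mu}$; a function with increments of order $|t-s|^{\mu}$, $\mu>1$, has vanishing derivative and is therefore constant, so $g=\der f=0$. Applied to the difference of two preimages of a given $h$, this yields uniqueness and defines $\Lambda$ unambiguously.

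For \emph{(b)} I would construct the preimage by dyadic midpoint reconstruction. Setting $g^{0}\equiv 0$ and writing $m=(s+t)/2$, define recursively
\[ g^{n}_{st}=g^{n-1}_{sm}+g^{n-1}_{mt}+h_{smt}. \]
The increments $D^{n}:=g^{n}-g^{n-1}$ then satisfy $D^{n}_{st}=D^{n-1}_{sm}+D^{n-1}_{mt}$ with $D^{1}_{st}=h_{smt}$, so $D^{n}_{st}$ is the sum of $h$ evaluated at the midpoints of the $2^{n-1}$ dyadic subintervals of $[s,t]$. Using $h\in\cz\cac_3^{\mu}$ (so $|h_{awb}|\le\|h\|_{\rho,\mu-\rho}|w-a|^{\rho}|b-w|^{\mu-\rho}$, both factors equal to half the subinterval length at a midpoint) gives
\[ |D^{n}_{st}|\le \tfrac12\,\|h\|_{\rho,\mu-\rho}\,(t-s)^{\mu}\,2^{n(1-\mu)}. \]
Since $\mu>1$ this is summable, so $g:=\sum_{n\ge1}D^{n}$ converges absolutely, lies in $\cac_2^{\mu}(V)$, and
\[ \|g\|_{\mu}\le \tfrac12\,\|h\|_{\rho,\mu-\rho}\sum_{n\ge1}2^{n(1-\mu)}=\frac{2^{-\mu}}{1-2^{1-\mu}}\,\|h\|_{\rho,\mu-\rho}=\frac{1}{2^{\mu}-2}\,\|h\|_{\rho,\mu-\rho}. \]
The constant is independent of $\rho$, hence survives summation over a decomposition $h=\sum_i h_i$ by linearity of the construction; taking the infimum delivers $\|g\|_{\mu}\le(2^{\mu}-2)^{-1}\|h\|_{\mu}$, which is exactly the claimed inequality and gives continuity of $\Lambda$. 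Passing to the limit in the recursion yields $g_{st}=g_{sm}+g_{mt}+h_{smt}$, i.e. $\der g=h$ at every dyadic midpoint, and hence at every pair of dyadic points.

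The main obstacle is to upgrade the coboundary identity $\der g=h$ from dyadic intermediate points to an \emph{arbitrary} $u\in(s,t)$, since the dyadic scheme never places a knot at a generic $u$. I would resolve this by running the same merging procedure along general partitions of $[s,t]$ and showing that the mesh-limit is independent of the partitions chosen: inserting one knot changes the reconstruction by a single term $h_{awb}$ bounded by $\|h\|_{\rho,\mu-\rho}|b-a|^{\mu}$, and a Young-type summation — again exploiting $\mu>1$ together with $\der h=0$ — shows these modifications are summable, so all reconstructions share one common limit. Since any partition can be refined to contain a prescribed $u$, and merging can be organized to treat $[s,u]$ and $[u,t]$ first, the common limit obeys $g_{st}=g_{su}+g_{ut}+h_{sut}$ for every $u$. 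By the uniqueness in \emph{(a)} this partition-limit coincides with the dyadic construction, so the single increment $g$ simultaneously satisfies $\der g=h$ on all of $\cs_{3,T}$ and the sharp estimate, which completes the argument.
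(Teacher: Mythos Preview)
The paper does not actually prove this proposition; it only remarks that the proof ``may be found in a simple form in [GT].'' Your approach is precisely the classical construction of the sewing map that appears in Gubinelli's work and in [GT]: uniqueness via the rigidity of $\mu$-H\"older increments with $\mu>1$, and existence plus the sharp constant via dyadic merging. Your argument (a) and the convergence/estimate part of (b) are correct as written, including the passage from a single $\|\cdot\|_{\rho,\mu-\rho}$ to the infimum norm by linearity of the construction.

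One small correction: the clause ``and hence at every pair of dyadic points'' after establishing $(\der g)_{smt}=h_{smt}$ at midpoints is not justified on its own --- the midpoint identity on subintervals does not directly propagate to $(\der g)_{sut}=h_{sut}$ for a general dyadic $u\in(s,t)$, because the dyadics of $[s,t]$, $[s,u]$ and $[u,t]$ do not mesh. You correctly flag this as the real obstacle in your final paragraph, and your proposed fix --- define the reconstruction along arbitrary partitions (merge order is immaterial exactly because $\der h=0$) and prove partition-independence of the limit by a point-removal argument exploiting $\mu>1$ --- is the standard sewing-lemma mechanism and is correct. An equivalent, slightly cleaner packaging: use Lemma~\ref{exd} to pick any $B\in\cac_2(V)$ with $\der B=h$, check by induction that your dyadic approximation reads $g^n_{st}=B_{st}-\sum_k B_{t^n_k t^n_{k+1}}$, and then partition-independence of $\lim\sum_k B_{t_k t_{k+1}}$ gives an additive limit $M$ with $\der M=0$, so $g=B-M$ satisfies $\der g=h$ on all of $\cs_{3,T}$ at once while inheriting your bound.
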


\smallskip

Let us mention at this point a first link between the structures
we have introduced so far and the problem of integration of irregular
functions.

\begin{corollary}
\label{cor:integration}
For any 1-increment $g\in\cac_2 (V)$ such that $\der g\in\cac_3^{1+}$,
set
$
\delta f = (\id-\Lambda \delta) g
$.
Then
$$
(\delta f)_{st} = \lim_{|\Pi_{st}| \to 0} \sum_{i=0}^{n-1}
g_{t_i\,t_{i+1}},
$$
where the limit is over any partition $\Pi_{st} = \{t_0=s,\dots,
t_n=t\}$ of $[s,t]$, whose mesh tends to zero. Thus, the 1-increment
$\delta f$ is the indefinite integral of the 1-increment $g$.
\end{corollary}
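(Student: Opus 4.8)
The plan is to proceed in two stages: first to justify that the notation $\der f$ is legitimate, i.e. that $(\id-\Lambda\der)g$ is genuinely an exact increment of some $f\in\cac_1(V)$, and then to establish the Riemann-sum representation by exploiting the extra regularity of the corrector $\Lambda\der g$.

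For the first stage I would observe that $g\in\cac_2(V)$ forces $\der g\in\cac_3(V)$, while $\der\der=0$ gives $\der g\in\cz\cac_3(V)$; combined with the hypothesis $\der g\in\cac_3^{1+}$, this places $\der g$ in $\cz\cac_3^{1+}(V)$, the domain of $\Lambda$. Setting $m:=\Lambda\der g\in\cac_2^{1+}(V)$ via Proposition \ref{prop:Lambda}, I then compute $\der\big((\id-\Lambda\der)g\big)=\der g-\der\Lambda\der g$. Since $\der g\in\cz\cac_3^{1+}(V)$, the relation $\der\Lambda=\id_{\cz\cac_3^{1+}(V)}$ of Proposition \ref{prop:Lambda} yields $\der\Lambda\der g=\der g$, whence $\der\big((\id-\Lambda\der)g\big)=0$, that is $(\id-\Lambda\der)g\in\cz\cac_2(V)$. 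Lemma \ref{exd} (applied with $k=1$) then produces an $f\in\cac_1(V)$ with $\der f=(\id-\Lambda\der)g=g-m$, which is precisely the claimed identity.

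For the second stage, fix a partition $\Pi_{st}=\{s=t_0,\dots,t_n=t\}$ of $[s,t]$ and split the Riemann sum according to $\der f=g-m$:
\begin{equation*}
\sum_{i=0}^{n-1} g_{t_i t_{i+1}} = \sum_{i=0}^{n-1}(\der f)_{t_i t_{i+1}} + \sum_{i=0}^{n-1} m_{t_i t_{i+1}}.
\end{equation*}
Because $f\in\cac_1(V)$, the first sum telescopes to $(\der f)_{st}=f_t-f_s$ independently of the partition, so the whole matter reduces to showing that the corrector sum vanishes in the limit. Here I would use that $m\in\cac_2^\mu(V)$ for some $\mu>1$, so that $|m_{t_i t_{i+1}}|\le\norm{m}_\mu|t_{i+1}-t_i|^\mu$ and hence
\begin{equation*}
\left|\sum_{i=0}^{n-1} m_{t_i t_{i+1}}\right|\le \norm{m}_\mu\sum_{i=0}^{n-1}|t_{i+1}-t_i|^\mu \le \norm{m}_\mu\,|\Pi_{st}|^{\mu-1}\sum_{i=0}^{n-1}|t_{i+1}-t_i| = \norm{m}_\mu\,(t-s)\,|\Pi_{st}|^{\mu-1}.
\end{equation*}
Since $\mu>1$, the right-hand side tends to $0$ as $|\Pi_{st}|\to0$, which establishes $(\der f)_{st}=\lim_{|\Pi_{st}|\to0}\sum_{i=0}^{n-1} g_{t_i t_{i+1}}$.

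The argument is essentially routine once Proposition \ref{prop:Lambda} is in hand; the only genuinely substantive point—and the one I regard as the heart of the proof—is the final H\"older estimate, where the gain $\mu>1$ (coming exactly from the assumption $\der g\in\cac_3^{1+}$ rather than mere continuity) is what forces the correction $\sum_i m_{t_i t_{i+1}}$ to disappear. This is the standard sewing/Young-type mechanism that underlies the whole algebraic integration approach, and it is precisely why the hypothesis is stated with the $1+$ regularity.
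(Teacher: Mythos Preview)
Your proof is correct and follows essentially the same route as the paper's: decompose $g=\der f+\Lambda\der g$, telescope the $\der f$ part, and use the $\cac_2^{1+}$ regularity of $\Lambda\der g$ to kill the remainder. You supply more detail than the paper (explicitly checking that $\der g$ lies in the domain of $\Lambda$, that $(\id-\Lambda\der)g$ is an exact increment, and spelling out the H\"older estimate), but the argument is the same.
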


\begin{proof}
Just consider the equation $g = \delta f + \Lambda \delta g$ and write
\begin{equation*}
 \begin{split}
S_{\Pi_{st}}& = \sum_{i=0}^{n-1} g_{t_i\, t_{i+1}}
= \sum_{i=0}^{n-1} (\delta f)_{t_i\, t_{i+1}} + \sum_{i=0}^{n-1}
(\Lambda \delta g)_{t_i\, t_{i+1}}
\\ & =
(\delta f)_{st} + \sum_{i=0}^{n-1} (\Lambda \delta g)_{t_i\,
t_{i+1}}.
     \end{split}
\end{equation*}
Then observe that, due to the fact that $\Lambda \delta g \in
\cac_2^{1+}(V)$, the last sum converges to zero.

\end{proof}

\subsection{Computations in $\cac_*$}\label{cpss}

Let us specialize now to the case $V=\R$, and just write $\cac_{k}^{\ga}$ for $\cac_{k}^{\ga}(\R)$. Then $(\cac_*,\delta)$ can be endowed with the following product:
for  $g\in\cac_n$ and $h\in\cac_m$ let  $gh$
be the element of $\cac_{n+m-1}$ defined by
\begin{equation}\label{cvpdt}
(gh)_{t_1,\dots,t_{m+n+1}}=
g_{t_1,\dots,t_{n}} h_{t_{n},\dots,t_{m+n-1}},
\quad
t_1,\dots,t_{m+n-1}\in\ott.
\end{equation}
In this context, we have the following useful properties.

\begin{proposition}\label{difrul}
The following differentiation rules hold true:
\begin{enumerate}
\item
Let $g\in\cac_1$ and $h\in\cac_1$. Then
$gh\in\cac_1$ and
\begin{equation}\label{difrulu}
\der (gh) = \der g\,  h + g\, \der h.
\end{equation}
\item
Let $g\in\cac_1$ and $h\in\cac_2$. Then
$gh\in\cac_2$ and
\begin{equation}
\der (gh) = \der g\, h - g \,\der h.
\end{equation}
\item
Let $g\in\cac_2$ and $h\in\cac_1$. Then
$gh\in\cac_2$ and
\begin{equation}
\der (gh) = \der g\, h  + g \,\der h.
\end{equation}
\end{enumerate}
\end{proposition}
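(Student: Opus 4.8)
\emph{Overall approach.} Each of the three identities is a purely algebraic statement about $1$- and $2$-increments over $\R$, and the plan is to prove all of them by the same elementary device: expand the left-hand side $\der(gh)$ through the definition of the coboundary (\ref{eq:coboundary}), or rather its explicit forms (\ref{eq:simple_application}); expand each term on the right-hand side through the definition of the product (\ref{cvpdt}); and check that the two resulting finite-difference expressions coincide. Since $V=\R$ here, everything reduces to ordinary algebra on real numbers evaluated at finitely many points of $\ott$, so no analytic input whatsoever is needed. I would also note at the outset that $gh$ lands in the announced space ($\cac_1$ in case (1), $\cac_2$ in cases (2) and (3)): if two consecutive arguments of $gh$ coincide, they both belong either to the block carried by $g$ or to the block carried by $h$ in (\ref{cvpdt}), so that factor vanishes.

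\emph{Base case (1).} For $g,h\in\cac_1$ the product is the pointwise one, $(gh)_t=g_t h_t$, so (\ref{eq:simple_application}) gives
\[
\big(\der(gh)\big)_{st}=g_t h_t-g_s h_s .
\]
On the other hand $\der g,\der h\in\cac_2$, and (\ref{cvpdt}) yields $(\der g\, h)_{st}=(\der g)_{st}\,h_t=(g_t-g_s)h_t$ together with $(g\,\der h)_{st}=g_s\,(\der h)_{st}=g_s(h_t-h_s)$. Adding these and telescoping reproduces exactly $g_t h_t-g_s h_s$, which establishes (\ref{difrulu}).

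\emph{Cases (2) and (3).} These are handled in the same spirit, the difference being that now $\der(gh)$ is a $2$-increment, so I would evaluate it on a generic triple $(s,u,t)\in\cs_{3,T}$ via the second formula in (\ref{eq:simple_application}), writing out $(gh)_{st}$, $(gh)_{su}$ and $(gh)_{ut}$ with (\ref{cvpdt}). Separately I would expand the products $\der g\,h$ and $g\,\der h$, which now land in $\cac_3$ and must therefore be unfolded with the correct shared time variable dictated by (\ref{cvpdt}). In each case all the mixed terms cancel and the survivors match the claimed combination. I would carry out (2) (with $g\in\cac_1$, $h\in\cac_2$) and (3) (with $g\in\cac_2$, $h\in\cac_1$) as two genuinely separate computations, precisely because the shared argument in (\ref{cvpdt}) sits at the first slot of $h$ in one case and at the last slot of $g$ in the other.

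\emph{Main point and expected obstacle.} There is no conceptual difficulty here: the proposition is exactly the assertion that $\der$ behaves as a (graded, twisted) derivation for the product (\ref{cvpdt}), and the proof is a finite computation. The only place that demands genuine care — and the source of the differing sign patterns across the three identities — is the interplay between the alternating signs $(-1)^{k-i}$ in (\ref{eq:coboundary}) and the asymmetric convention in (\ref{cvpdt}) fixing which argument is shared by the two factors. Keeping these two ingredients consistent through every term, especially in the mixed degree-$1$/degree-$2$ case, is the whole substance of the verification.
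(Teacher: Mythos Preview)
Your proposal is correct and follows essentially the same approach as the paper: the paper only writes out case (1) explicitly, computing $\big(\der(gh)\big)_{st}=g_th_t-g_sh_s=g_s(h_t-h_s)+(g_t-g_s)h_t$ and identifying the two summands with $g\,\der h$ and $\der g\,h$ via (\ref{cvpdt}), then declares the other two relations ``just as simple.'' Your outline for cases (2) and (3), including the remark about tracking the shared argument in (\ref{cvpdt}) against the alternating signs in $\der$, is exactly what is needed to complete those omitted verifications.
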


\begin{proof}
We will just prove (\ref{difrulu}), the other relations being just as  simple.
If $g,h\in\cac_1 $, then
$$
\lc \der (gh) \rc_{st}
= g_th_t-g_sh_s
=g_s\lp h_t-h_s \rp +\lp  g_t-g_s\rp h_t\\
=g_s \lp \der h \rp_{st}+ \lp \der g \rp_{st} h_t,
$$
which proves our claim.

\end{proof}

\smallskip

The iterated integrals of smooth functions on $\ott$ are 
particular cases of elements of $\cac_2$, which will be of interest

for us. Let us recall  some basic  rules for these objects: consider
$f\in\cac_1^\infty$ and $g\in\cac_1^\infty$, where $\cac_1^\infty $
denotes the set of smooth functions on $\ott$. Then the integral
$\int f \, dg$, which will be denoted  indistinctly by $\int f\, dg$
or $\cj(f\, dg)$, can be considered as an element of
$\cac_2^\infty$. Namely, for $(s,t)\in\cs_{2,T}$ we set
$$
\cj_{st}(f\,  dg)
=
\left(\int  f dg \right)_{st} = \int_s^t   f_u dg_u.
$$
The multiple integrals can also be defined in the following way:
given a smooth element $h \in \cac_2^\infty$ and $(s,t)\in\cs_{2,T}$, we set
$$
\cj_{st}(h\, dg )\equiv
\left(\int h dg  \right)_{st} = \int_s^t  h_{su} dg_u .
$$
In particular,  for
$f^1\in\cac_1^\infty$, $f^2\in\cac_1^\infty$
and $f^3\in\cac_1^\infty$ the double integral
$\cj_{st}( f^3\, df^2 df^1)$ is defined  as
$$
\cj_{st}( f^3\, df^2df^1)
=\lp \int f^3\, df^2 df^1  \rp_{st}
= \int_s^t \cj_{su}\lp f^3\, df^2  \rp \, df_u^1.
$$
Now suppose that the $n$th order iterated integral of
$f^{n+1}df^n\cdots df^2$, which is  denoted by
$\cj(f^{n+1}df^n\cdots df^2)$, has been defined for
$f^j\in\cac_1^\infty$.
Then, if $f^1\in\cac_1^\infty$, we set
\begin{equation}\label{multintg}
\cj_{st}(f^{n+1}df^n \cdots df^2 df^1)
=
\int_s^t  \cj_{su}\lp f^{n+1}df^n\cdots df^2\rp  \, df_u^{1},
\end{equation}
which recursively defines the iterated integrals of smooth
functions. Observe that an   $n$th order integral $\cj(df^n\cdots df^2
df^1)$ can be defined along the same lines, starting with
$$\cj(df)=\delta f,$$
$$\cj_{st}(df^2\,df^1)=\int_s^t \cj_{su}(df^2)\,df^1_u=\int_{s}^t \big( \delta
f^2\big)_{su}\,df^1_u,$$ and so on.

\medskip

The following relations between multiple integrals and the operator $\der$ will also be useful. The reader is sent to \cite{GT} for its elementary proof.
\begin{proposition}\label{dissec}
Let $f\in\cac_1^\infty$ and $g\in\cac_1^\infty$.
Then it holds that
$$
\der g = \cj( dg), \qquad \der\lp \cj(f dg)\rp = 0, \qquad \der\lp
\cj (df dg)\rp = (\der f) (\der g) = \cj(df) \cj(dg),
$$
and
$$
\der \lp \cj( df^n \cdots df^1)\rp  =
\sum_{i=1}^{n-1}
\cj\lp df^n \cdots df^{i+1}\rp \cj\lp df^{i}\cdots df^1\rp.
$$
\end{proposition}

\section{Stratonovich calculus of order 2}
\label{sec:strato-order-2} This section is devoted to establish an
It\^{o}-Stratonovich change of variable formula for a process
$x\in\cac_1^{\ga}(\R^d)$, with $1/3<\ga\le 1/2$, provided this
process generates a (weakly geometric) L\'{e}vy area. It is intended as a warm up for the general change of variable of the next section, especially for those readers who might not be acquainted to rough paths techniques.

\subsection{Weakly controlled processes}\label{sec:wc-ps}

Recall that we have in mind to give a change of variable formula for
$f(x)$ when $x$ is a function in $\cac_1^{\ga}(\R^d)$ with $\ga>1/3$
and $f$ is a sufficiently smooth   function. In this case, the rough path
above $x$ is reduced to a second order iterated integral, and the
multiplicative property (\ref{eq:multiplicativity}) of the path can
be read as:
\begin{hypothesis}\label{hyp:x}
The path $x$ is $\R^d$-valued $\ga$-H\"older with $\ga>1/3$ and
admits a L\'evy area, that is a process
$\xd\in\cac_2^{2\ga}(\R^{d,d})$ satisfying
$$\,
\der\xd=\bx^{\1}\otimes \bx^{\1},
\quad\mbox{i.\!\! e.}\quad
\lc (\der\xd)_{sut} \rc(i,j)
=
[\bx^{\1}(i)]_{su} [\bx^{\1}(j)]_{ut},
$$
for $s,u,t\in\cs_{3,T}$ and $i,j\in\{1,\ldots,d  \}$.
\end{hypothesis}
\noindent

\smallskip

Let us now be  more specific about the global strategy we will adopt in order
to obtain our Stratonovich type formula.
First of all,  we shall define integrals with respect to $x$ for a class of integrands
called weakly controlled processes,
 that we proceed to define. Notice that in the following definition we   use for the
 first time the convention of summation
 over repeated indices, which will prevail until the end of Section \ref{sec:strato-order-N}.
\begin{definition}\label{def39}
Let $z$ be a process in $\cac_1^\ga(\R^n)$ with $1/3<\ga\le 1/2$
(that is, $N:=\lfloor 1/\ga\rfloor=2$). We say that $z$ is a weakly controlled path based
on $x$ and starting from $a$ if $z_0=a$, which is a given initial condition in $\R^n$,
and $\der z\in\cac_2^\gamma(\R^n)$ can be decomposed into
\begin{equation}\label{weak:dcp}
\der z(i)=\zeta(i,i_1) \bx^{\1}(i_1)+ r(i),
\quad\mbox{i.\!\! e.}\quad
(\der z(i))_{st}=\zeta_s(i,i_1) \bx^{\1}_{st}(i_1) + r_{st}(i),
\end{equation}
for all $(s,t)\in\cs_{2,T}$. In the previous formula, we assume
$\zeta\in\cac_1^\ga(\R^{n,d})$, and $r$ is a regular part
such that $r\in\cac_2^{2\ga}(\R^n)$. The space of weakly controlled
paths starting from $a$ will be denoted by $\cq_{\ga,a}(\R^n)$, and a process
$z\in\cq_{\ga,a}(\R^n)$ can be considered in fact as a couple
$(z,\zeta)$. The natural semi-norm on $\cq_{\ka,a}(\R^k)$ is given
by
$$
\cn[z;\cq_{\ga,a}(\R^n)]=
\cn[z;\cac_1^{\ga}(\R^n)]
+ \cn[\zeta;\cac_1^{\infty}(\R^{n,d})]
+ \cn[\zeta;\cac_1^{\ga}(\R^{n,d})]
+\cn[r;\cac_2^{2\ga}(\R^n)],
$$
with $\cn[g;\cac_1^{\ka}]$ defined by
(\ref{def:hnorm-c1}) and
$\cn[\zeta;\cac_1^{\infty}(V)]=\sup_{0\le s\le T}|\zeta_s|_V$.
\end{definition}

With this definition at  hand, we will try to obtain our change of variables formula in the following way:
\begin{enumerate}
\item
Study the decomposition of $f(x)$ as weakly controlled process, when $f$ is a smooth function.
\item
Define rigorously the integral $\int z_u dx_u=\cj(z dx)$
for a weakly controlled path $z$ and compute its decomposition
(\ref{weak:dcp}).
\item
Compare the decompositions of $f(x)$ and $\int \nabla f(x)\, dx$,
and show that they coincide, up to a term with H\"{o}lder regularity
greater than 1.
\end{enumerate}
In this section, we will concentrate on the first point of the program.

\smallskip

Let us see then how to decompose $f(x)$ as a controlled process when $f$ is a smooth enough function, for which we first introduce a convention which will hold true until the end of the paper: for any smooth function $f:\R^d\to\R$, $k\ge 1$, $(i_1,\ldots,i_k)\in\{1,\ldots, d\}^k$ and $\xi\in\R^d$, we set
\begin{equation}\label{eq:convention-partial-deriv}
\partial_{i_1\ldots i_k}f(\xi)^{k}=\frac{\partial^k f}{\partial x_{i_1}\cdots \partial x_{i_k}}(\xi).
\end{equation}
With this notation in hand, our decomposition result is the following:
\begin{proposition}\label{cp:weak-phi}
Let $f:\R^d\to\R$ be a $C_b^2$ function such that
$f(x_0)=a$, and set $z=f(x)$. Then $z\in\cq_{\ga,a}$,  and it can be decomposed into
$
\der z=\zeta \der x +r,
$
with
$$
\zeta(i)= \partial_{i}f(x)
\quad\mbox{ and }\quad
r= \der f(x)- \partial_{i}f(x) \, \bx^{\1}(i).
$$
Furthermore,
\begin{equation}\label{bnd:phi}
\cn[z;\cq_{\gamma,a}]\le c_{f,T}\lp 1+\cn^2[x;\cac_1^{\ga}(\R^d)]
\rp.
\end{equation}
\end{proposition}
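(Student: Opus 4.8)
The plan is to check directly the four requirements defining membership of $z=f(x)$ in $\cq_{\ga,a}$: that $z\in\cac_1^\ga(\R)$, that $\zeta=\nabla f(x)$ belongs to $\cac_1^\ga(\R^{1,d})$ and is bounded, and that the proposed regular part $r$ belongs to $\cac_2^{2\ga}(\R)$. The initial condition $z_0=f(x_0)=a$ holds by hypothesis, and the decomposition $\der z=\zeta(i)\,\bx^{\1}(i)+r$ is nothing but the \emph{definition} of $r$ once we recall that $\bx^{\1}_{st}(i)=(x_t-x_s)(i)$. The single observation driving the whole argument is that, with this identification,
$$
r_{st}=f(x_t)-f(x_s)-\sum_{i=1}^d \partial_i f(x_s)\,(x_t-x_s)(i)
$$
is exactly the first order Taylor remainder of $f$ evaluated along the path $x$.

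First I would dispose of the two $\ga$-H\"older estimates, which only use that $f\in C_b^2$. Since $\nabla f$ is bounded, $f$ is Lipschitz and $|f(x_t)-f(x_s)|\le \|\nabla f\|_\infty\,|x_t-x_s|\le \|\nabla f\|_\infty\,\cn[x;\cac_1^\ga(\R^d)]\,|t-s|^\ga$, which gives $z\in\cac_1^\ga(\R)$. Likewise $\zeta=\nabla f(x)$ is bounded by $\|\nabla f\|_\infty$, so $\cn[\zeta;\cac_1^\infty(\R^{1,d})]\le \|\nabla f\|_\infty$; and because $\nabla^2 f$ is bounded each $\partial_i f$ is Lipschitz, whence $|\partial_i f(x_t)-\partial_i f(x_s)|\le \|\nabla^2 f\|_\infty\,\cn[x;\cac_1^\ga(\R^d)]\,|t-s|^\ga$ and $\zeta\in\cac_1^\ga(\R^{1,d})$.

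The step with actual content is the remainder bound. Applying Taylor's formula with integral remainder to the displayed expression for $r_{st}$ yields
$$
r_{st}=\sum_{i,j=1}^d\left(\int_0^1(1-\te)\,\partial^2_{ij}f\big(x_s+\te(x_t-x_s)\big)\,d\te\right)(x_t-x_s)(i)\,(x_t-x_s)(j),
$$
so that $|r_{st}|\le \tfrac12\,c_f\,|x_t-x_s|^2$ with $c_f$ controlling the second derivatives of $f$. Since $x$ is $\ga$-H\"older this gives $|r_{st}|\le \tfrac12\,c_f\,\cn^2[x;\cac_1^\ga(\R^d)]\,|t-s|^{2\ga}$, i.e. $r\in\cac_2^{2\ga}(\R)$ with $\cn[r;\cac_2^{2\ga}(\R)]\le \tfrac12 c_f\,\cn^2[x;\cac_1^\ga(\R^d)]$. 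This is where the $C^2$ regularity of $f$ is essential: the quadratic vanishing of the Taylor remainder is precisely what upgrades the a priori $\ga$-regularity of the increments of $f(x)$ to the $2\ga$-regularity demanded of the regular part.

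Finally I would assemble the four estimates into $\cn[z;\cq_{\ga,a}]$. The three contributions coming from $z$ and $\zeta$ are each bounded by a constant depending on $f$ times $1$ or $\cn[x;\cac_1^\ga(\R^d)]$, while the remainder contributes the quadratic factor $\cn^2[x;\cac_1^\ga(\R^d)]$. Absorbing the linear term via $\cn[x;\cac_1^\ga(\R^d)]\le \tfrac12\big(1+\cn^2[x;\cac_1^\ga(\R^d)]\big)$ produces the announced bound $\cn[z;\cq_{\ga,a}]\le c_{f,T}\big(1+\cn^2[x;\cac_1^\ga(\R^d)]\big)$, with a constant depending only on $\|\nabla f\|_\infty$ and $\|\nabla^2 f\|_\infty$. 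There is no genuine obstacle in this warm-up proposition; the only points needing care are respecting the summation convention over repeated indices and keeping the $(n,d)$ dimensional bookkeeping straight.
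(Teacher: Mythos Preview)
Your proof is correct and follows exactly the approach of the paper, which simply writes the decomposition $(\der z)_{st}=\partial_i f(x_s)\,\bx^{\1}_{st}(i)+r_{st}$ and then declares the analytic estimates ``readily obtained'' and ``left to the reader.'' You have carried out precisely those omitted details---the Lipschitz bounds for $z$ and $\zeta$ and the Taylor remainder bound for $r$---so your argument is a fleshed-out version of the paper's sketch.
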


\begin{proof}
The algebraic part of the assertion is straightforward. Just write
\begin{equation}\label{eq:dcp-f-x-1}
(\der z)_{st}=f(x_t)-f(x_s)=
\partial_{i}f(x_s) \, \bx^{\1}_{st}(i) + r_{st},
\end{equation}
which is the desired decomposition.

\vspace{0.3cm}

In order to give an estimate for $\cn[z;\cq_{\ga,a}(\R^n)]$, one has
of course to establish bounds for $\cn[z;\cac_1^{\ga}(\R^n)]$,
$\cn[\zeta;$ $\cac_1^{\ga}(\R^d)]$,
$\cn[\zeta;\cac_1^{\infty}(\R^d)]$ and $\cn[r;\cac_2^{2\ga}]$. These
estimates are readily obtained from decomposition
(\ref{eq:dcp-f-x-1}), and details are left to the reader.

\end{proof}
\begin{remark}\label{finitud}
The algebraic part of the above proposition remains true if we only
suppose that $f\in \cac^2(\R^d)$. Indeed, since $f$ together with its first and
second order partial derivatives and  $x$  are continuous functions on a
compact set, we have that $\ze(i)=\partial_i f(x)\in \cac_1^\ga$ and
$r=\delta f(x)-\partial_if(x)\bx^{\1}(i)\in \cac_2^\ga$.
Nevertheless, the inequality norm (\ref{bnd:phi}) fails and
$\cn[z;\cq_{\gamma,a}]$ cannot be bounded  by terms only depending
on  the H\"{o}lder norm of $x$.
\end{remark}

\subsection{Integration of weakly controlled paths}
Let us now turn  to the integration of weakly controlled paths,
which is summarized in the following theorem.
\begin{theorem}\label{intg:mdx}
For a given $1/3<\ga\le 1/2$, let $x$ be a process satisfying
Hypothesis \ref{hyp:x}. Furthermore,  let $m\in\cq_{\ga,b}(\R^{d})$
with decomposition $m_0=b\in\R^d$ and
\begin{equation}\label{dcp:m}
\der m(i)=\mu(i,i_1) \, \bx^{\1}(i_1)+ r(i),
\quad\mbox{ where }\quad
\mu\in\cac_1^\ga(\R^{d, d}), \, r\in\cac_2^{2\ga}(\R^{d}).
\end{equation}
Define $z$ by $z_0=a\in\R$ and
\begin{equation}\label{dcp:mdx}
\der z=
m(i) \, \bx^{\1}(i) + \mu(i,i_1) \, \bx^{\2}(i_1,i)
-\laa\lp  r(i) \, \bx^{\1}(i) + \der\mu(i,i_1) \, \bx^{\2}(i_1,i) \rp.
\end{equation}
Finally, set
$$
\cj_{st}(m\, dx)=\ist \left\langle m_u, \, dx_u\right\rangle_{\R^d} \triangleq (\der z)_{st}.
$$
Then:

\smallskip

\noindent \emph{(1)} $z$ is well-defined as an element of
$\cq_{\ga,a}(\R)$, and coincides with the Riemann-Stieltjes integral
of $z$ with respect to $x$ whenever these two functions are smooth.

\smallskip

\noindent \emph{(2)} The semi-norm of $z$ in $\cq_{\ga,a}(\R)$ can
be estimated as
\begin{equation}\label{bnd:norm-imdx}
\cn[z;\cq_{\ga,a}(\R)]\le c_{x} \lp 1 +
\cn[m;\cq_{\ga,b}(\R^{d})]\rp,
\end{equation}
for a positive constant $c_{x}$ which can be bounded as follows:
$$
c_x\le c
\lp
\cn[\bx^{\1};\, \cac_2^{\ga}(\R^d)]+ \cn[\bx^{\2};\, \cac_2^{2\ga}(\R^{d^2})]
\rp,
\quad\mbox{ for a universal constant }c.
$$

\smallskip

\noindent
\emph{(3)}
It holds
\begin{equation}\label{rsums:imdx}
\cj_{st}(m\, dx) =\lim_{|\Pi_{st}|\to 0}\sum_{q=0}^{n-1} \lc
m_{t_{q}}(i) \, \bx^{\1}_{t_{q}, t_{q+1}}(i) + \mu_{t_{q}}(i,i_1) \,
\xd_{t_{q}, t_{q+1}}(i_1,i) \rc,
\end{equation} for any $0\le s<t\le T$,
where the limit is taken over all partitions
$\Pi_{st} = \{s=t_0,\dots,t_n=t\}$
of $[s,t]$, as the mesh of the partition goes to zero.
\end{theorem}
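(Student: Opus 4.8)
The plan is to recognize the right-hand side of (\ref{dcp:mdx}) as $(\id-\laa\der)$ applied to a single well-chosen $1$-increment, and then to read off all three assertions from the properties of $\laa$ (Proposition \ref{prop:Lambda}) and from Corollary \ref{cor:integration}. Accordingly, I would first introduce the candidate increment
$$
J := m(i)\, \bx^{\1}(i) + \mu(i,i_1)\, \bx^{\2}(i_1,i),
$$
which lies in $\cac_2(\R)$ because $m,\mu\in\cac_1$ and $\bx^{\1},\bx^{\2}\in\cac_2$, the products being taken in the sense of (\ref{cvpdt}). The heart of the matter is the algebraic computation of $\der J$. Applying the differentiation rule $(2)$ of Proposition \ref{difrul} to each of the two products, using $\der\bx^{\1}=0$ and the L\'evy-area relation $\der\xd=\bx^{\1}\otimes\bx^{\1}$ of Hypothesis \ref{hyp:x}, and finally substituting the decomposition (\ref{dcp:m}) for $\der m$, I expect the two contributions carrying the factor $\mu_s(i,i_1)\bx^{\1}_{su}(i_1)\bx^{\1}_{ut}(i)$ to cancel exactly, leaving
$$
\der J = r(i)\,\bx^{\1}(i) + \der\mu(i,i_1)\, \bx^{\2}(i_1,i) =: h.
$$
This cancellation is the main (and essentially the only) delicate point; everything else is bookkeeping.

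Once $\der J=h$ is established, I would verify that $h\in\cz\cac_3^{1+}(\R)$, which is precisely what makes (\ref{dcp:mdx}) meaningful. Here $\der h=\der\der J=0$ is automatic, while the two summands are controlled by $\|r\,\bx^{\1}\|_{3\ga}\le\|r\|_{2\ga}\|\bx^{\1}\|_{\ga}$ and $\|\der\mu\,\bx^{\2}\|_{3\ga}\le\|\der\mu\|_{\ga}\|\bx^{\2}\|_{2\ga}$, both finite with exponent $3\ga>1$ since $\ga>1/3$. Hence $\laa h$ is well defined and $\der z=J-\laa h=(\id-\laa\der)J$ is a genuine increment (indeed $\der\der z=\der J-\der\laa h=h-h=0$), so $z\in\cac_1(\R)$ is determined by $z_0=a$. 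Reading off the decomposition of $\der z$ then exhibits $z$ as weakly controlled with $\zeta=m\in\cac_1^\ga(\R^d)$ and regular part $r^z=\mu(i,i_1)\bx^{\2}(i_1,i)-\laa h$, the latter lying in $\cac_2^{2\ga}$ because $\mu\,\bx^{\2}\in\cac_2^{2\ga}$ and $\laa h\in\cac_2^{3\ga}\subset\cac_2^{2\ga}$. This yields $z\in\cq_{\ga,a}(\R)$, i.e. the first half of assertion $(1)$.

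For assertion $(3)$ I would apply Corollary \ref{cor:integration} directly to $g=J$: since $\der J=h\in\cac_3^{1+}$, the corollary gives
$$
(\der z)_{st} = \big[(\id-\laa\der)J\big]_{st} = \lim_{|\Pi_{st}|\to 0}\sum_{q=0}^{n-1} J_{t_q t_{q+1}} = \lim_{|\Pi_{st}|\to 0}\sum_{q=0}^{n-1}\Big[ m_{t_q}(i)\,\bx^{\1}_{t_q t_{q+1}}(i) + \mu_{t_q}(i,i_1)\,\xd_{t_q t_{q+1}}(i_1,i)\Big],
$$
which is exactly (\ref{rsums:imdx}). The coincidence with the Riemann--Stieltjes integral $\int m\,dx$ for smooth $x,m$, completing assertion $(1)$, then follows from this representation: in the smooth case $\xd_{t_q t_{q+1}}$ is the genuine iterated integral and is $O(|\Pi_{st}|)$ uniformly, so the second sum vanishes in the limit while the first converges to $\int_s^t\langle m_u,dx_u\rangle_{\R^d}$.

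Finally, assertion $(2)$ is a matter of assembling H\"older estimates for the four semi-norms in $\cn[z;\cq_{\ga,a}(\R)]$. The terms involving $\zeta=m$ are controlled directly by $\cn[m;\cq_{\ga,b}(\R^d)]$; for the regular part I would bound $\|\mu\,\bx^{\2}\|_{2\ga}\le\cn[\mu;\cac_1^\infty]\,\cn[\bx^{\2};\cac_2^{2\ga}]$ and, via the contraction estimate (\ref{ineqla}), $\|\laa h\|_{2\ga}\le c\,T^{\ga}\|h\|_{3\ga}\le c\,T^{\ga}\big(\|r\|_{2\ga}\|\bx^{\1}\|_{\ga}+\|\der\mu\|_{\ga}\|\bx^{\2}\|_{2\ga}\big)$; the $\cac_1^\ga$-norm of $z$ is handled through $\der z=m(i)\bx^{\1}(i)+r^z$. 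Collecting these bounds and factoring out $\cn[\bx^{\1};\cac_2^{\ga}]+\cn[\bx^{\2};\cac_2^{2\ga}]$ produces (\ref{bnd:norm-imdx}) with the stated form of $c_x$. Throughout, the only genuine obstacle is the cancellation in the computation of $\der J$; the remaining arguments are the standard $\laa$-calculus together with routine products of H\"older increments.
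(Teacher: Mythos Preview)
Your proposal is correct and follows essentially the same route as the paper: introduce the germ $J=m(i)\,\bx^{\1}(i)+\mu(i,i_1)\,\bx^{\2}(i_1,i)$, compute $\der J$ using Proposition~\ref{difrul}(2) together with Hypothesis~\ref{hyp:x} and the decomposition~(\ref{dcp:m}) to obtain the cancellation and hence $\der J=r(i)\,\bx^{\1}(i)+\der\mu(i,i_1)\,\bx^{\2}(i_1,i)\in\cz\cac_3^{3\ga}$, rewrite~(\ref{dcp:mdx}) as $(\id-\laa\der)J$, and then read off the controlled structure, the bound~(\ref{bnd:norm-imdx}), and the Riemann-sum representation via Corollary~\ref{cor:integration}. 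Your ordering---establishing $\der J=h$ first so that $\der h=0$ is immediate from $\der\der=0$---is arguably cleaner than the paper's, and your sign in $\der J$ is the correct one (the paper's Step~2 carries a harmless sign typo).
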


\vspace{0.3cm}

Before going into the technical details of the proof,  let us see
how to recover (\ref{dcp:mdx}) in the smooth case, in order to
justify our  definition of the integral. (Notice however that
(\ref{rsums:imdx}) corresponds to the usual definition in the rough paths theory
\cite{LQ-bk}, which gives another kind of justification.)
\\ Let us assume for the moment
that $x$ is a smooth function and that $m\in\cac_1^\infty(\R^{d})$
admits the decomposition (\ref{dcp:m}) with
$\mu\in\cac_1^\infty(\R^{d,d})$ and $r\in\cac_2^{\infty}(\R^{d})$.
Then $\ist\langle m_u, \, dx_u\rangle$ is well-defined, and we have
$$
\ist \left\langle m_u, \, dx_u\right\rangle_{\R^d} =
m_s(i) \, [x_t(i)-x_s(i)] + \ist[m_u(i)-m_s(i)] \, dx_u(i)
$$  for $s\le t$,
respectively
$$
\cj( m \,dx) = m(i) \,\bx^{\1}(i) + \cj\lp\der m(i) \, dx(i)\rp.
$$
Let us now plug the  decomposition (\ref{dcp:m}) into this expression,
which yields
\begin{eqnarray}\label{eq:imdx}
\cj_{st}(m \,dx)&=& m_{s}(i) \, \bx_{st}^{\1}(i)
+\ist \mu_{s}(i,i_1) \, \bx_{su}^{\1}(i_1) \, dx_u(i)
+ \cj(r \,dx)\nonumber\\
&=& m_{s}(i) \, \bx_{st}^{\1}(i) +\mu_{s}(i,i_1) \, \bx^{\2}_{st}(i_1,i) + \cj(r \,dx).
\end{eqnarray}
Notice that the terms
$m\,\der x$ and $\mu\, \xd$  in (\ref{eq:imdx})  are well-defined as soon as $x$ and $\xd$
are defined themselves. In order to push forward our analysis to the
rough case, it remains to handle the term $\cj(r \,dx)$.
Thanks to (\ref{eq:imdx}) we can write
$$
\cj(r \,dx)= \cj(m \,dx)- m(i) \,\bx^{\1}(i) -\mu(i,i_1) \, \bx^{\2}(i_1,i),
$$
and let us analyze this relation by applying $\der$ to both sides.
Using the second part of Proposition \ref{difrul} and the   Proposition \ref{dissec}
yields
\begin{eqnarray}\label{eq:di-rdx}
\der\lc\cj(r \,dx)\rc&=&
- \der\lc m(i) \,\bx^{\1}(i) \rc -\der\lc \mu(i,i_1) \, \bx^{\2}(i_1,i) \rc\nonumber\\
&=& -\der m(i) \, \bx^{\1}(i) -\der\mu(i,i_1) \, \bx^{\2}(i_1,i)
+\mu(i,i_1)\, \bx^{\1}(i_1) \, \bx^{\1}(i) \nonumber\\
&=&-\lc \mu(i,i_1) \,\bx^{\1}(i_1) + r(i) \rc \bx^{\1}(i)
-\der\mu(i,i_1) \, \bx^{\2}(i_1,i)
+\mu(i,i_1)\, \bx^{\1}(i_1) \, \bx^{\1}(i) \nonumber\\
&=& -\der\mu(i,i_1) \, \bx^{\2}(i_1,i) - r(i)\, \bx^{\1}(i).
\end{eqnarray}
Assuming now that the increments $\der\mu(i,i_1) \, \bx^{\2}(i_1,i)$ and $r(i)\, \bx^{\1}(i)$ are all  elements of $\cac_2^\mu$ with $\mu>1$, $\der\mu(i,i_1) \, \bx^{\2}(i_1,i) + r(i)\, \bx^{\1}(i)$ becomes
an element of $\dom(\laa)$, and inserting~(\ref{eq:di-rdx}) into~(\ref{eq:imdx}) we obtain
$$
\der z=\cj(m\, dx)\equiv
m(i) \, \bx^{\1}(i) + \mu(i,i_1) \, \bx^{\2}(i_1,i)
-\laa\lp  r(i) \, \bx^{\1}(i) + \der\mu(i,i_1) \, \bx^{\2}(i_1,i) \rp,
$$
which is the expression (\ref{dcp:mdx}) of our Theorem
\ref{intg:mdx}. Thus (\ref{dcp:mdx}) is a natural expression for
$\cj(m\,  dx)$.

\smallskip

\begin{proof}[Proof of Theorem \ref{intg:mdx}]
We will divide  this proof into  two steps.

\vspace{0.3cm}

\noindent {\it Step 1:} Recalling the assumption $3\ga>1$, let us
analyze the three terms in the right hand side of (\ref{dcp:mdx})
and show that they define an element of $\cq_{\ga,a}$ such that
$\der z =\zeta(i)\, \bx^{\1}(i) +\hat r$ with
$$
\zeta(i)=m(i) \quad\mbox{ and }\quad \hat r= \mu(i,i_1) \,
\bx^{\2}(i_1,i) -\laa\lp  r(i) \, \bx^{\1}(i) + \der\mu(i,i_1) \,
\bx^{\2}(i_1,i) \rp.
$$
Indeed, on   one hand $m\in\cac_1^\ga(\R^d)$ and thus $\ze=m$ is
of the desired form for an element of $\cq_{\ga,a}$. On the other
hand, if $m\in\cq_{\ga,b}$, $\mu$ is assumed to be bounded and since
$\xd\in\cac_2^{2\ga}(\R^{d,d})$ we get that $\mu(i,i_1) \,
\bx^{\2}(i_1,i)\in\cac_2^{2\ga}$. Along the same lines we can prove
that $r(i) \, \bx^{\1}(i)\in \cac_3^{3\ga}$ and $\der\mu(i,i_1) \,
\bx^{\2}(i_1,i)\in \cac_3^{3\ga}$. Since $3\ga>1$, we obtain  that
$r(i) \, \bx^{\1}(i) + \der\mu(i,i_1) \,
\bx^{\2}(i_1,i)\in\dom(\laa)$ and
$$
\laa\lp r\,\der x+\der \mu\,\xd \rp\in \cac_2^{3\ga}.
$$
Thus we have proved that
$$
\hat r= \mu(i,i_1) \, \bx^{\2}(i_1,i) -\laa\lp  r(i) \, \bx^{\1}(i)
+ \der\mu(i,i_1) \, \bx^{\2}(i_1,i) \rp \in \cac_2^{2\ga}
$$
and hence that $z\in\cq_{\ga,a}(\R)$. The estimate
(\ref{bnd:norm-imdx}) is now obtained using
the same kind of considerations and are left to the reader for the
sake of conciseness.

\smallskip

\noindent
{\it Step 2:}
The same kind of computations as those leading to (\ref{eq:di-rdx})
also show that
$$
\der\lp m(i) \, \bx^{\1}(i) + \mu(i,i_1) \, \bx^{\2}(i_1,i)  \rp
=
-\lc r(i) \, \bx^{\1}(i) + \der\mu(i,i_1) \, \bx^{\2}(i_1,i) \rc.
$$
Hence equation (\ref{dcp:mdx}) can also be read as
$$
\cj(m\, dx)=\lc \id-\laa\der \rc \lp m(i) \, \bx^{\1}(i) + \mu(i,i_1) \, \bx^{\2}(i_1,i) \rp,
$$
and a direct application of Corollary \ref{cor:integration} yields
(\ref{rsums:imdx}), which ends our proof.

\end{proof}

\subsection{It\^{o}-Stratonovich formula}
\label{sec:stoch-rough}

We are now ready to obtain a change of variable formula for $f(x)$,
according to the strategy given in Section \ref{sec:wc-ps}. For
this, we need to assume, on top of the multiplicative Hypothesis
\ref{hyp:x}, the following geometric rule which  is
(\ref{eq:geom-rough-path}) in the case
$N=\lfloor\frac1{\ga}\rfloor=2$:
\begin{hypothesis}\label{hyp:sym-xd}
Let $\xd$ be the area process defined in Hypothesis \ref{hyp:x}. Then we assume that, for all $(s,t)\in\cs_{2,T}$, we have
\begin{equation*}
\bx^{\1}_{st}(i) \, \bx^{\1}_{st}(j)=\bx^{\2}_{st}(i,j)+ \bx^{\2}_{st}(j,i).
\end{equation*}
\end{hypothesis}

\smallskip

With these assumptions in mind, our change of variable formula  reads
as follows:
\begin{proposition}\label{prop:ito-strat-form-2}
Assume that $x$ satisfies Hypothesis \ref{hyp:x} and
\ref{hyp:sym-xd}. Let $f$ be  a $C^3(\R^{d};\R)$ function. Then
\begin{equation}\label{itoform}
\lc \der(f(x)) \rc_{st}= \cj_{st}\lp \nabla f(x) \, dx \rp
=\ist \lla  \nabla f(x_u), \, dx_u \rra_{\R^d},
\end{equation}
where the integral above has to be understood in the sense of
Theorem \ref{intg:mdx}.
\end{proposition}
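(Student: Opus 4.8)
The plan is to follow the three-step strategy announced in Section~\ref{sec:wc-ps}: decompose $f(x)$ as a weakly controlled path, compute the decomposition of $\cj(\nabla f(x)\,dx)$ via Theorem~\ref{intg:mdx}, and then show the two decompositions agree up to a remainder of H\"older regularity strictly greater than~$1$. Since both $\der(f(x))$ and $\cj_{st}(\nabla f(x)\,dx)$ are $1$-increments that will be shown to satisfy $\der(\cdot)=0$ together with a remainder in $\cac_2^{\mu}$ for some $\mu>1$, the uniqueness part of Proposition~\ref{prop:Lambda} (via the identity $g=\der f+\laa\der g$ underlying Corollary~\ref{cor:integration}) forces them to coincide. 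Concretely, I would set $m=\nabla f(x)$, observe by Proposition~\ref{cp:weak-phi} that $m\in\cq_{\ga,b}(\R^d)$ with $\mu(i,i_1)=\partial_{i i_1}^{2}f(x)$ and regular part $r(i)=\der(\partial_i f(x))-\partial_{i i_1}^2 f(x)\,\bx^{\1}(i_1)$, and then apply Theorem~\ref{intg:mdx} to get the explicit decomposition \eqref{dcp:mdx} of $\cj(\nabla f(x)\,dx)$.

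Next I would compute $\der\big[\der(f(x))\big]$ and $\der\big[\cj(\nabla f(x)\,dx)\big]$ and compare. For the left-hand side, $\der\der(f(x))=0$ automatically since $\der\der=0$; the content is therefore in matching the first-order terms and controlling the remainders. Writing $(\der f(x))_{st}=\partial_i f(x_s)\,\bx^{\1}_{st}(i)+\rho_{st}$ from \eqref{eq:dcp-f-x-1}, a second-order Taylor expansion gives $\rho_{st}=\tfrac12\partial_{ij}^2 f(x_s)\,\bx^{\1}_{st}(i)\bx^{\1}_{st}(j)+o(|t-s|^{2\ga})$. Here is where Hypothesis~\ref{hyp:sym-xd} enters decisively: the symmetric combination $\bx^{\1}_{st}(i)\bx^{\1}_{st}(j)=\bx^{\2}_{st}(i,j)+\bx^{\2}_{st}(j,i)$ lets me rewrite the quadratic Taylor term as $\partial_{ij}^2 f(x_s)\,\bx^{\2}_{st}(i,j)$, exactly matching the $\mu(i,i_1)\,\bx^{\2}(i_1,i)$ term produced by Theorem~\ref{intg:mdx}. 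Thus both increments have the identical first-order expansion $\partial_i f(x)\,\bx^{\1}(i)+\partial_{ij}^2 f(x)\,\bx^{\2}(j,i)$, and their difference $w:=\der(f(x))-\cj(\nabla f(x)\,dx)$ lies in $\cac_2$ with a remainder in $\cac_2^{\mu}$, $\mu=3\ga>1$.

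It then remains to argue that $w=0$. Since $w$ is itself an exact increment ($\der(f(x))=\der(f(x))$ is exact, and $\cj(\nabla f(x)\,dx)=\der z$ is exact by construction), we have $\der w=0$; combined with $w\in\cac_2^{\mu}$ for $\mu>1$, Proposition~\ref{prop:Lambda} gives $w=\laa\der w=\laa(0)=0$. Equivalently, one invokes Corollary~\ref{cor:integration}: both sides equal the limit of the same Riemann-type sums $\sum_q\big[\partial_i f(x_{t_q})\bx^{\1}_{t_q t_{q+1}}(i)+\partial_{ij}^2 f(x_{t_q})\xd_{t_q t_{q+1}}(j,i)\big]$, the telescoping sum $\sum_q (\der f(x))_{t_q t_{q+1}}=(\der f(x))_{st}=f(x_t)-f(x_s)$ on one side and \eqref{rsums:imdx} on the other, so the two limits coincide.

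The main obstacle, and the place requiring genuine care rather than bookkeeping, is the second-order Taylor step: one must verify that the remainder beyond the quadratic term is genuinely $o(|t-s|)$ in the $\cac_2^{\mu}$ sense with $\mu>1$. This needs $f\in C^3$ (so that the third derivative controls the Taylor remainder), the $\ga$-H\"older regularity of $x$ with $3\ga>1$, and the $2\ga$-regularity of $\bx^{\2}$; the interplay of these exponents is precisely what makes the $\laa$ argument applicable and is the delicate analytic heart of the comparison. The algebraic identifications via Hypotheses~\ref{hyp:x} and~\ref{hyp:sym-xd} are then routine once this regularity is in place.
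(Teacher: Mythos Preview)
Your proposal is correct and essentially coincides with the paper's proof: the paper carries out precisely your ``equivalently'' argument via Corollary~\ref{cor:integration}, telescoping $f(x_t)-f(x_s)$ over a partition, applying a second-order Taylor expansion, using Hypothesis~\ref{hyp:sym-xd} together with Schwarz's rule to convert $\tfrac12\partial^2_{ij}f(x_{t_q})\bx^{\1}(i)\bx^{\1}(j)$ into $\partial^2_{ij}f(x_{t_q})\bx^{\2}(i,j)$, and then matching with the Riemann sums \eqref{rsums:imdx}. Your primary route through $w\in\cac_2^{3\ga}$, $\der w=0$, hence $w=\laa\der w=0$, is a clean repackaging of the same idea and is equally valid.
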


\begin{proof}

Consider a partition $\Pi_{st}=\{s=t_0<\cdots<t_n=t\}$ of $[s,t]$.
We have that
\begin{align}\label{eq-taylor}
&f(x_t)-f(x_s)=
\sum_{q=0}^{n-1}f(x_{t_{q+1}})-f(x_{t_q}) \notag\\
&=\sumq\partial_{i}f(x_{t_q})\,\bx^{\1}_{t_qt_{q+1}}(i)+\frac12\sumq\sum_{i_1,i_2=1}^d\partial^2_{i_1 i_2}
f(x_{t_q})\,\bx^{\1}_{t_qt_{q+1}}(i_1)\,\bx^{\1}_{t_qt_{q+1}}(i_2)\notag\\
&\hspace{3cm}+\frac{1}{3!}\sumq\sum_{i_1,i_2,i_3=1}^d\partial^3_{i_1 i_2 i_3}
f(x_{\xi^q_{i_1i_2i_3}})\,\bx^{\1}_{t_qt_{q+1}}(i_1)\,\bx^{\1}_{t_qt_{q+1}}(i_2)\,\bx^{\1}_{t_qt_{q+1}}(i_3),
\end{align}
for a certain element $\xi^q_{i_1i_2i_3}\in[t_q,t_{q+1}]$. Invoking now
Hypothesis \ref{hyp:sym-xd} and Schwarz rule, one can express the
sum $\frac12\sum_{i_1,i_2=1}^d\partial^2_{i_1 i_2}
f(x_{t_q})\,\bx^{\1}_{t_qt_{q+1}}(i_1)\,\bx^{\1}_{t_qt_{q+1}}(i_2)$ as
$$\frac12\sum_{i_1,i_2=1}^d\partial^2_{i_1 i_2}
f(x_{t_q})\,[\bx^{\2}_{t_qt_{q+1}}(i_1,i_2)+\bx^{\2}_{t_qt_{q+1}}(i_2,i_1)]=\sum_{i_1,i_2=1}^d\partial^2_{i_1 i_2}
f(x_{t_q})\,\bx^{\2}_{t_qt_{q+1}}(i_1,i_2).
$$

So, going back to our convention on repeated indices, one can thus
recast expression~(\ref{eq-taylor}) into
\begin{equation}\label{eq-taylor-2}
f(x_t)-f(x_s)=\sumq\partial_{i}f(x_{t_q})\,\bx^{\1}_{t_qt_{q+1}}(i)
+\sumq\partial^2_{i_1 i_2} f(x_{t_q})\,\bx^{\2}_{t_qt_{q+1}}(i_1,i_2)
+R_{st},
\end{equation}
where $R_{st}$ can be written as $R_{st}=\frac{1}{3!}\sumq \rho_{t_qt_{q+1}}$ with
\begin{equation*}
\rho_{t_qt_{q+1}}=
\partial^3_{i_1 i_2 i_3}
f(x_{\xi^q_{i_1i_2i_3}})\,\bx^{\1}_{t_qt_{q+1}}(i_1)\,\bx^{\1}_{t_qt_{q+1}}(i_2)\,\bx^{\1}_{t_qt_{q+1}}(i_3).
\end{equation*}
Furthermore, it is readily checked that for any $0\le q\le n-1$ we
have $\rho_{t_qt_{q+1}}\le C |t_{q+1}-t_{q}|^{3\ga}$, with $C$ a
constant depending on $f$ and $x$, owing to the fact that $f$ is a
$C^3$ function and $x$ is continuous on $[0,T]$. Thus, since
$3\ga>1$, it is easily seen that $\lim_{|\Pi_{st}|\to 0} \sumq
\rho_{t_qt_{q+1}}=0$. Plugging this relation into
(\ref{eq-taylor-2}), we have proved that
\begin{equation}\label{eq-taylor-3}
f(x_t)-f(x_s)= \lim_{|\Pi_{st}|\to 0}
\sumq \partial_{i}f(x_{t_q})\,\bx^{\1}_{t_qt_{q+1}}(i)
+\sumq\partial^2_{i_1 i_2} f(x_{t_q})\,\bx^{\2}_{t_qt_{q+1}}(i_1,i_2).
\end{equation}

\smallskip

On the other hand, Proposition \ref{cp:weak-phi} asserts that the decomposition of $\nabla f(x)$ as a weakly controlled path is given by
$$\lp\delta\nabla f(x)(i)\rp_{st}=\lp\delta\partial_i
\,f(x)\rp_{st}=\partial_{i,i_1}^2f(x_s)\bx_{st}^{\1}(i_1)\,+\,r_{st}(i),
$$
where $r$ lies into $\cac_2^{2\ga}$
Hence, using formula (\ref{rsums:imdx}), we have that
\begin{equation}\label{eq:intg-riemann-sums}
\cj_{st}(\nabla f(x)dx)=\lim_{|\Pi_{st}|\to 0}\lc \sumq\partial_{i}f(x_{t_q})\,\bx^{\1}_{t_qt_{q+1}}(i)+\sumq\partial^2_{i_1 i_2}
f(x_{t_q})\,\bx^{\2}_{t_qt_{q+1}}(i_1,i_2)\rc.
\end{equation}
Comparing this equality with (\ref{eq-taylor-3}), one
obtain easily the desired It\^{o}-Stratonovich formula.

\end{proof}

\begin{remark}\label{itoconseq}
 The following formula:
\begin{multline}\label{eq:intg-riemann-sums-products}
\cj_{st}(\nabla f(x)dx)\\
=\lim_{|\Pi_{st}|\to 0}
\lc\sumq\partial_{i}f(x_{t_q})\,\bx^{\1}_{t_qt_{q+1}}(i)+\frac12\sumq\sum_{i_1,i_2=1}^d\partial^2_{i_1
i_2}
f(x_{t_q})\,\bx^{\1}_{t_qt_{q+1}}(i_1)\,\bx^{\1}_{t_qt_{q+1}}(i_2)\rc
\end{multline}
is also an interesting byproduct of the proof of
Proposition~\ref{prop:ito-strat-form-2}.
\end{remark}

\section{General Stratonovich calculus}
\label{sec:strato-order-N}

We will now handle the case of a weakly geometric rough path based on $x\in\cac_1^{\ga}(\R^d)$ as defined in the introduction, and we set $N=\lfloor 1/\ga \rfloor$. We shall define an integration theory and show an It\^{o}-Stratonovich formula for this kind of process. This being done along the same lines as in Section \ref{sec:strato-order-2}, we may skip some details of computations here. In any case, recall that we suppose the existence of a family $\{\bx^{\bn};\, n\le N\}$ of increments in $\cac_2$ satisfying the regularity, multiplicative and geometric properties given at Section \ref{sec:intro}.

\subsection{Weakly controlled processes}
\label{sec:wc-ps-N}

With respect to the case of order 2, the notion of controlled process is obviously obtained here by introducing more iterated integrals of the process $x$. A new kind of cascade relation is also required, which is reminiscent of the Heisenberg type structure of \cite{LQ-bk}.
\begin{definition}\label{def:weak-ps-N}
Let $z$ be a process in $\cac_1^\ga(\R^n)$ with $1/(N+1)<\ga\le
1/N$. We say that $z$ is a weakly controlled path based on $x$ and starting at $a\in\R^n$, if
$z_0=a$ and $\der
z\in\cac_2^\ga(\R^n)$ can be decomposed into
\begin{equation}\label{eq:weak-dcp-N}
\der z(i)=\sum_{k=1}^{N-1} \zeta^{k}(i,i_1,\ldots,i_k) \, \bx^{\bk}(i_k,\ldots,i_1)
+ r^{0}(i),
\end{equation}
for all $(s,t)\in\cs_{2,T}$. In the previous formula, we assume
$\zeta^{k}\in\cac_1^\ga(\R^{n}\times\R^{d^k})$, and $r^0$ is a regular part
such that $r\in\cac_2^{N\ga}(\R^n)$. We also suppose that for any $1\le k\le N-2$, the increment $\zeta^k$ can be further decomposed into
\begin{equation}\label{eq:weak-dcp-N-further}
\der \zeta^{k}(i,i_1,\ldots,i_k)=
\sum_{l=1}^{N-1-k} \zeta^{k+l}(i,i_1,\ldots,i_{k+l}) \, \bx^{\bl}(i_{k+l},\ldots,i_{k+1})
+ r^{k}(i,i_1,\ldots,i_k),
\end{equation}
where the remainder $r$ belongs to $\cac_2^{(N-k)\ga}(\R^{n}\times\R^{d^k})$.

\smallskip

As in Section \ref{sec:strato-order-2}, the space of weakly controlled
paths will be denoted by $\cq_{\ga,a}(\R^n)$, and a process
$z\in\cq_{\ga,a}(\R^n)$ can be considered in fact as a tuple
$(z,\zeta^{1},\ldots,\zeta^{N-1})$. The natural semi-norm on $\cq_{\ka,a}(\R^n)$ is given
by
\begin{eqnarray*}
\cn[z;\cq_{\ga,a}(\R^n)]
&=&
\cn[z;\cac_1^{\ga}(\R^n)]
+ \sum_{k=1}^{N-1}
\cn[\zeta^{k};\cac_1^{\infty}(\R^{n}\times\R^{d^k})]\\
&&\quad + \cn[\zeta^{k};\cac_1^{\ga}(\R^{n}\times\R^{d^k})]
+\sum_{k=0}^{N-1}\cn[r^k;\cac_2^{(N-k)\ga}(\R^n)],
\end{eqnarray*}
with $\cn[g;\cac_1^{\ka}]$ defined by
(\ref{def:hnorm-c1}) and
$\cn[\zeta;\cac_1^{\infty}(V)]=\sup_{0\le s\le T}|\zeta_s|_V$.
\end{definition}

\smallskip

The decomposition of $f(x)$ as a controlled process for a smooth enough function $f$ can now be read as follows:
\begin{proposition}\label{prop:cp-weak-phi-N}
Let $x$ be a path satisfying Hypothesis \ref{hyp:rough-intro} and let  $f:\R^d\to\R$ be a $C_b^N$ function such that $f(x_0)=a$, and set $z=f(x)$. Then $z\in\cq_{\ga,a}$,  and it can be decomposed into
\begin{equation}\label{eq:dcp-f-z-N}
\der z=\sum_{k=1}^{N-1} \zeta^{k}(i_1,\ldots,i_k) \, \bx^{\bk}(i_k,\ldots,i_1)
+ r^{0},
\end{equation}
with
\begin{equation}\label{desc}
\zeta^{k}_{s}(i_1,\ldots,i_k)= \partial_{i_k\cdots i_1}^{k} f(x_s),
\qquad
r_{st}^{0}= [\der f(x)]_{st}-
\sum_{k=1}^{N-1}\frac{\partial_{i_1\cdots i_k}^{k} f(x_s)}{k!} \,
\prod_{j=1}^{k} \bx_{st}^{\1}(i_j),
\end{equation}
and
\begin{equation}\label{eq:def-r-k}
r^{k}_{st}(i_1,\ldots,i_k)=
\der \lc\partial_{i_1\cdots i_k}^{k} f(x)\rc_{st}-
\sum_{p=1}^{N-k-1}
\frac{\partial_{i_1\cdots i_k j_1\cdots j_p}^{k+p} f(x_s)}{p!} \,
\prod_{q=1}^{p} \bx_{st}^{\1}(j_q),
\end{equation}
where we recall our convention (\ref{eq:convention-partial-deriv}) for $\partial^{k}_{i_1\cdots i_k}f$.
Furthermore,
\begin{equation}\label{eq:bnd-phi-z-N}
\cn[z;\cq_{\ga,a}]\le c_{f,T}\lp 1+\cn^N[x;\cac_1^{\ga}(\R^d)] \rp.
\end{equation}
\end{proposition}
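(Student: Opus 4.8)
The plan is to establish the algebraic decomposition first and then the quantitative bound, exactly paralleling the order-2 case in Proposition \ref{cp:weak-phi}, but now keeping track of $N-1$ nested cascade relations. The natural starting point is a Taylor expansion of $f$ to order $N$ with integral (or Lagrange) remainder. Writing $(\der z)_{st}=f(x_t)-f(x_s)$ and expanding around $x_s$ in powers of the increment $x_t-x_s=\bx^{\1}_{st}$, I would obtain
\begin{equation*}
(\der z)_{st}=\sum_{k=1}^{N-1}\frac{1}{k!}\,\partial^k_{i_1\cdots i_k}f(x_s)\,\prod_{j=1}^{k}\bx^{\1}_{st}(i_j)+r^0_{st},
\end{equation*}
which is precisely the definition of $r^0$ in (\ref{desc}). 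To match the claimed form (\ref{eq:dcp-f-z-N}), the key algebraic input is the geometricity Hypothesis \ref{hyp:rough-intro}(3): the symmetrized product $\frac{1}{k!}\prod_{j=1}^{k}\bx^{\1}_{st}(i_j)$ must be re-expressed in terms of $\bx^{\bk}_{st}$. Indeed, applying (\ref{eq:geom-rough-path}) repeatedly collapses the $k$-fold product of first-level increments into a shuffle sum of $\bx^{\bk}$ components; after summing a derivative of $f$ (which is symmetric in its indices by Schwarz's rule) against this shuffle sum, all $k!$ shuffle permutations contribute equally, so the factor $1/k!$ cancels and one is left with $\partial^k_{i_k\cdots i_1}f(x_s)\,\bx^{\bk}_{st}(i_k,\ldots,i_1)$. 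This identifies $\zeta^k_s(i_1,\ldots,i_k)=\partial^k_{i_k\cdots i_1}f(x_s)$ as stated.

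Next I would verify the cascade relations (\ref{eq:weak-dcp-N-further}). Since $\zeta^k=\partial^k f(x)$ is itself of the form ``smooth function of $x$,'' the very same Taylor-plus-geometricity computation applied to $\partial^k_{i_1\cdots i_k}f$ in place of $f$ (now expanded only to the order permitted by the remaining regularity budget, namely order $N-k$) produces exactly the decomposition of $\der\zeta^k$ with leading terms $\zeta^{k+l}\,\bx^{\bl}$ and remainder $r^k$ as defined in (\ref{eq:def-r-k}). Thus the cascade structure is not an extra hypothesis to be checked independently but a direct consequence of the self-similar way in which derivatives of $f$ reproduce the same Taylor pattern at one lower order; this is the conceptual heart of the statement.

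The remaining work is analytic: one must show $r^k\in\cac_2^{(N-k)\ga}$ for each $0\le k\le N-1$, and more generally bound every term in the semi-norm $\cn[z;\cq_{\ga,a}]$. For $r^k$ this follows from the Taylor remainder estimate, which is controlled by $|\partial^{N-k}f|_\infty\,|x_t-x_s|^{(N-k)\ga}$ using the $C^N_b$ assumption and the $\ga$-H\"older regularity of $x$; the factor $\cn^{N-k}[x;\cac_1^\ga]$ arises from the $(N-k)$-fold product of increments, and since $N-k\le N$ the global bound is dominated by $1+\cn^N[x;\cac_1^\ga]$, giving (\ref{eq:bnd-phi-z-N}). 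The $\cac_1^\infty$ and $\cac_1^\ga$ norms of the $\zeta^k=\partial^k f(x)$ are handled by the boundedness of the derivatives of $f$ together with the mean value theorem along $x$. I expect the main obstacle to be purely bookkeeping rather than conceptual: correctly organizing the iterated application of geometricity and confirming that the combinatorial shuffle factors cancel the $1/k!$ uniformly across all levels of the cascade, so that the indices in $\zeta^k$ and $r^k$ line up exactly as written in (\ref{desc})--(\ref{eq:def-r-k}). Given that the order-2 case was already treated in full and the paper explicitly invites skipping routine details, I would present the Taylor expansion and the geometricity reduction explicitly and relegate the norm estimates to a remark that they follow ``as in Proposition \ref{cp:weak-phi}.''
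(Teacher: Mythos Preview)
Your proposal is correct and follows essentially the same route as the paper's proof: Taylor expansion of $f(x_t)-f(x_s)$ to order $N-1$, conversion of the symmetric products $\frac{1}{k!}\prod_j\bx^{\1}_{st}(i_j)$ into $\bx^{\bk}_{st}$ via the geometricity relation (\ref{eq:geom-rough-path}) combined with the Schwarz symmetry of the partial derivatives, and then iteration of the same argument on each $\partial^k f(x)$ to obtain the cascade decompositions and the remainders $r^k$. The paper likewise leaves the analytic estimates (\ref{eq:bnd-phi-z-N}) as routine consequences of the Taylor remainder and the $C_b^N$ assumption, so your plan to refer back to the order-2 case for those details matches the paper's presentation.
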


\begin{proof}
The algebraic part of the assertion is obtained by combining a simple Taylor expansion and our geometric assumption (\ref{eq:geom-rough-path}). Indeed, Taylor's expansion directly yields
\begin{equation}\label{eq:taylor-f-x-N}
[\der f(x)]_{st}= D_{st}
+r_{st}^{0},
\quad\mbox{with}\quad
D_{st}=\sum_{k=1}^{N-1}\frac{\partial_{i_1\cdots i_k}^{k} f(x_s)}{k!} \,
\prod_{j=1}^{k} \bx_{st}^{\1}(i_j),
\end{equation}
where $r^{0}\in\cac_2^{N\ga}$. Moreover,
appealing to (\ref{eq:geom-rough-path}), we have
\begin{eqnarray}
D_{st}&=&
\sum_{k=1}^{N-1}\frac{1}{k!}\sum_{i_1,\ldots,i_k=1}^{d} \partial_{i_1\cdots i_k}^{k} f(x_s) \,
\prod_{j=1}^{k} \bx_{st}^{\1}(i_j)\\
&=&\sum_{k=1}^{N-1}\frac{1}{k!}\sum_{i_1,\ldots,i_k=1}^{d}
\partial_{i_1\cdots i_k}^{k} f(x_s) \,
\sum_{\si\in\Sigma_k} \bx^{\bk}(i_{\si(1)},\ldots,i_{\si(k)}),
\end{eqnarray}
and invoking the symmetry properties for the derivatives of $f$ we obtain
\begin{eqnarray}\label{eq:prod-x1-xk}
D_{st}&=&
\sum_{k=1}^{N-1}\frac{1}{k!}\sum_{i_1,\ldots,i_k=1}^{d}
\sum_{\si\in\Sigma_k} \partial_{i_{\si(k)}\cdots i_{\si(1)}}^{k} f(x_s) \,
\bx^{\bk}(i_{\si(1)},\ldots,i_{\si(k)}) \notag\\
&=&\sum_{k=1}^{N-1}\sum_{i_1,\ldots,i_k=1}^{d}
\partial_{i_{k}\cdots i_{1}}^{k} f(x_s) \,
\bx^{\bk}(i_{1},\ldots,i_{k}).
\end{eqnarray}
Going back to our convention on repeated indices, we end up with
\begin{equation*}
[\der f(x)]_{st}
=\sum_{k=1}^{N-1} \partial_{i_{k}\cdots i_{1}}^{k} f(x_s) \,
\bx^{\bk}(i_{1},\ldots,i_{k})
+r^{0}_{st},
\end{equation*}
which is the announced formula (\ref{eq:dcp-f-z-N}). Further expansions of the coefficients $\partial_{i_{k}\cdots i_{1}}^{k} f(x)$, leading to a relation of type (\ref{eq:weak-dcp-N-further}), are performed in the same way and are left to the reader for sake of conciseness.

\vspace{0.3cm}

In order to give an estimate for $\cn[z;\cq_{\ga,a}(\R^n)]$, one has
of course to establish bounds for $\cn[z;\cac_1^{\ga}(\R^n)]$,
$\cn[\zeta^{k};$ $\cac_1^{\ga}(\R^d)]$,
$\cn[\zeta^{k};\cac_1^{\infty}(\R^d)]$ and $\cn[r^{k};\cac_2^{2\ga}]$. These
estimates are readily obtained from the expressions in
decomposition (\ref{desc}), and details are left to the reader. The
analytic bound (\ref{eq:bnd-phi-z-N}) is also obtained in a
straightforward manner.

\end{proof}
\begin{remark}\label{finitud2}
As in the case $n=2$, we point out that the algebraic conclusion of
this proposition is still true for a $f\in\cac^N(\R^d)$, without boundedness restrictions. However, inequality (\ref{eq:bnd-phi-z-N}) would
take a different form, since the multiplicative constants depend on
the derivatives of $f$ composed with $x$.
\end{remark}

\subsection{Integration of weakly controlled paths}
The formula which defines the integral of a controlled process with respect to $x$ is now defined similarly to the one
in Theorem~\ref{intg:mdx}, in spite of the roughness of $x$.
\begin{theorem}\label{prop:intg-mdx-N}
For a given $\ga>0$ with $\lfloor 1/\ga \rfloor=N$ (that is,
$1/(N+1)<\ga\le 1/N$), let $x$ be a process satisfying Hypothesis \ref{hyp:rough-intro}. Furthermore,  let $m\in\cq_{\ga,b}(\R^{d})$ with
decomposition $m_0=b\in\R^d$ and
\begin{equation}\label{eq:dcp-m-N}
\der m(i)=\sum_{k=1}^{N-1} \mu^{k}(i,i_1,\ldots,i_k) \, \bx^{\bk}(i_k,\ldots,i_1)
+ r^{0}(i),
\end{equation}
where the increments $\mu^{k}$ satisfy the further assumptions of Definition \ref{def:weak-ps-N}.
Define $z$ by $z_0=a\in\R$ and
\begin{align}\label{eq:dcp-mdx-N}
&\der z=
m(i) \, \bx^{\1}(i) + \sum_{k=1}^{N-1} \mu^{k}(i,i_1,\ldots,i_k) \bx^{\bk+\1}(i_k,\ldots,i_1,i) \\
&-\laa\lp \sum_{k=0}^{N-2} r^{k}(i,i_1,\ldots,i_k)
\bx^{\bk+\1}(i_k,\ldots,i_1,i) +\der\mu^{N-1}(i,i_1,\ldots,i_{N-1})
\bx^{\bnn}(i_{N-1},\ldots,i_1,i) \rp . \notag
\end{align}
Finally, set
$$
\cj_{st}(m\, dx)=\ist \left\langle m_u, \, dx_u\right\rangle_{\R^d} \triangleq (\der z)_{st}.
$$
Then:

\smallskip

\noindent
\emph{(1)}
$z$ is well-defined as an element of $\cq_{\ka,a}(\R)$, and coincides
with the Riemann integral of $z$ with respect to $x$ whenever these two functions are smooth.

\smallskip

\noindent
\emph{(2)}
The semi-norm of $z$ in $\cq_{\ka,a}(\R)$ can be estimated as
\begin{equation}\label{eq:bnd-norm-imdx-N}
\cn[z;\cq_{\ga,a}(\R)]\le c_{x} \lp 1 +
\cn[m;\cq_{\ga,b}(\R^{d})]\rp,
\end{equation}
for a positive constant $c_{x}$ which can be bounded as $c_x\le c \sum_{k=1}^{N} \cn[\bx^{\bk};\, \cac_{2}^{k\ga}]$, where $c$ stands for a universal constant.

\smallskip

\noindent
\emph{(3)}
It holds
\begin{equation}\label{eq:rsums-imdx-N}
\cj_{st}(m\, dx) =\lim_{|\Pi_{st}|\to 0}\sum_{q=0}^{n-1}\Big[
m_{t_{q}}(i) \, \bx^{\1}_{t_{q}, t_{q+1}}(i) + \sum_{k=1}^{N-1}
\mu^{k}_{t_{q}}(i,i_1,\ldots,i_k)\, \bx^{\bk+\1}_{t_{q}
t_{q+1}}(i_k,\ldots,i_1,i)\Big]
\end{equation}
for any $0\le s<t\le T$,
where the limit is taken over all partitions
$\Pi_{st} = \{s=t_0,\dots,t_n=t\}$
of $[s,t]$, as the mesh of the partition goes to zero.
\end{theorem}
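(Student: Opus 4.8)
The plan is to follow exactly the strategy already deployed in the proof of Theorem~\ref{intg:mdx}, which treats the case $N=2$, and to lift it to general $N$ by replacing the single L\'evy area $\bx^{\2}$ with the full stack $\{\bx^{\bk};\, k\le N\}$ and the single cascade relation with the nested relations~(\ref{eq:weak-dcp-N-further}). The key observation is that~(\ref{eq:dcp-mdx-N}) can again be written in the compact form $\cj(m\,dx)=[\id-\laa\der]\,A$, where
\begin{equation*}
A=m(i)\,\bx^{\1}(i)+\sum_{k=1}^{N-1}\mu^{k}(i,i_1,\ldots,i_k)\,\bx^{\bk+\1}(i_k,\ldots,i_1,i).
\end{equation*}
Once this identity is in hand, parts \emph{(1)} and \emph{(3)} follow almost formally, as they did in the order-2 case.

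First I would verify the algebraic identity underlying the $\laa\der$ representation, namely that $\der A$ equals the argument of $\laa$ in~(\ref{eq:dcp-mdx-N}), up to sign. This is the heart of the matter and the natural place for the main obstacle to appear. The computation proceeds by applying $\der$ termwise: on $m(i)\,\bx^{\1}(i)$ one uses part~(2) of Proposition~\ref{difrul} together with the decomposition~(\ref{eq:dcp-m-N}) of $\der m$; on each $\mu^{k}\,\bx^{\bk+\1}$ one uses the same Leibniz rule, the multiplicativity relation~(\ref{eq:multiplicativity}) governing $\der\bx^{\bk+\1}$, and the cascade~(\ref{eq:weak-dcp-N-further}) for $\der\mu^{k}$. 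The delicate point is that the products $\mu^{k}(\cdots)\,\bx^{\bl}(\cdots)\,\bx^{\bk-\bl+\1}(\cdots)$ arising from multiplicativity must cancel precisely against the terms $\der\mu^{k}\cdot\bx^{\bk+\1}$ re-expanded through~(\ref{eq:weak-dcp-N-further}), leaving behind only the remainder terms $r^{k}\,\bx^{\bk+\1}$ and the top-order term $\der\mu^{N-1}\,\bx^{\bnn}$. Keeping the index bookkeeping straight across this telescoping cancellation is the real work; this is exactly the generalization of the chain~(\ref{eq:di-rdx}), and I expect it to be the main obstacle.

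Next I would check that the argument of $\laa$ actually lies in $\dom(\laa)=\cz\cac_3^{1+}$. Each summand $r^{k}(i,i_1,\ldots,i_k)\,\bx^{\bk+\1}(i_k,\ldots,i_1,i)$ is a product of a $\cac_2^{(N-k)\ga}$ increment with a $\cac_2^{(k+1)\ga}$ increment, hence lives in $\cac_3^{(N+1)\ga}$, and $(N+1)\ga>1$ by the standing assumption $1/(N+1)<\ga$. Likewise $\der\mu^{N-1}\,\bx^{\bnn}\in\cac_3^{(N+1)\ga}$. That the increment is annihilated by $\der$ follows from the algebraic identity of the previous paragraph combined with $\der\der=0$. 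Applying Proposition~\ref{prop:Lambda} then shows $\laa$ of this expression is a well-defined element of $\cac_2^{(N+1)\ga}$, and one reads off that $\der z=m(i)\,\bx^{\1}(i)+\hat r$ with $\hat r\in\cac_2^{N\ga}$ of the form required by Definition~\ref{def:weak-ps-N}, establishing part~\emph{(1)}; the norm estimate~(\ref{eq:bnd-norm-imdx-N}) in part~\emph{(2)} comes from the continuity bound~(\ref{ineqla}) and the multiplicativity of H\"older norms under the product~(\ref{cvpdt}), and may be left to the reader as in the earlier theorem.

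Finally, for part~\emph{(3)}, the representation $\cj(m\,dx)=[\id-\laa\der]\,A$ lets me invoke Corollary~\ref{cor:integration} directly: the Riemann-type sum $\sum_q A_{t_q t_{q+1}}$ converges to $(\der z)_{st}$ because the corrective sum $\sum_q(\laa\der A)_{t_q t_{q+1}}$ vanishes in the limit, owing to $\laa\der A\in\cac_2^{(N+1)\ga}$ with $(N+1)\ga>1$. Since the $(s,t)$-block of $A$ is precisely $m_{t_q}(i)\,\bx^{\1}_{t_q t_{q+1}}(i)+\sum_{k=1}^{N-1}\mu^{k}_{t_q}(\cdots)\,\bx^{\bk+\1}_{t_q t_{q+1}}(\cdots)$, this yields~(\ref{eq:rsums-imdx-N}). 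The identification with the Riemann--Stieltjes integral in the smooth case is the same computation sketched before the proof of Theorem~\ref{intg:mdx}, now carried out with the full iterated-integral hierarchy, and requires no new idea.
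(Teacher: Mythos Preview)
Your proposal is correct and follows essentially the same approach as the paper: both hinge on computing $\der A$ via the Leibniz rule, the multiplicativity relation~(\ref{eq:multiplicativity}), and the cascade~(\ref{eq:weak-dcp-N-further}), observing the telescoping cancellation (the paper's term $M$), checking that the residual lies in $\cac_3^{(N+1)\ga}\subset\dom(\laa)$, and then invoking Corollary~\ref{cor:integration} on $[\id-\laa\der]A$ for the Riemann-sum representation. The only cosmetic difference is that the paper motivates the formula by first writing $\cj(m\,dx)=A+\cj(r^0\,dx)$ in the smooth case and computing $\der(\cj(r^0\,dx))$, whereas you compute $\der A$ directly; since $\der\cj(m\,dx)=0$ these are the same computation up to a sign.
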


\begin{proof}
Relying on what has been done at Section \ref{sec:strato-order-2}, we mainly derive here the expression~(\ref{eq:dcp-mdx-N}) for $\cj(m\, dx)$. Once this expression is obtained, the other estimates follow like
in Theorem \ref{intg:mdx}, except for the higher number of terms which have to be taken care of.

\smallskip

Hence let us assume for the moment that $m$ and $x$ are smooth functions, and try to define $\cj(m\, dx)$ in an appropriate way for generalizations to rougher cases: one can write, using decomposition (\ref{eq:dcp-m-N}),
\begin{eqnarray}\label{eq-expr-I-m-dx-N}
\cj(m\, dx)&=&
m(i) \, \bx^{\1}(i) + \cj(\der m(i) \, dx) \\
&=&m(i) \, \bx^{\1}(i)
+\sum_{k=1}^{N-1} \mu^{k}(i,i_1,\ldots,i_k) \bx^{\bk+\1}(i_k,\ldots,i_1,i)
+\cj(r^{0} \, dx).
\end{eqnarray}
Hence, like for equation (\ref{eq:di-rdx}), one can deduce that
\begin{multline*}
\der(\cj(r^{0} \, dx))
=-\der m(i) \, \bx^{\1}(i)
- \sum_{k=1}^{N-1} \der \mu^{k}(i,i_1,\ldots,i_k) \bx^{\bk+\1}(i_k,\ldots,i_1,i) \\
+\sum_{k=1}^{N-1} \mu^{k}(i,i_1,\ldots,i_k) \der\bx^{\bk+\1}(i_k,\ldots,i_1,i).
\end{multline*}
We now plug relation (\ref{eq:dcp-m-N}) for $\der m$, relation
(\ref{eq:weak-dcp-N-further}) for $\der\mu^k$ and the multiplicative
relation (\ref{eq:multiplicativity}) for $\der\bx^{\bk+\1}$ into the
latter equation. This yields
\begin{align}\label{eq:exp-I-r0-dx-N}
&\der(\cj(r^{0} \, dx))=
- \sum_{k=1}^{N-1} \mu^{k}(i,i_1,\ldots,i_k) \, \bx^{\bk}(i_k,\ldots,i_1) \, \bx^{\1}(i)
-r^{0}(i) \, \bx^{\1}(i)  -M  \notag\\
&-\der\mu^{\bnn-\1}(i,i_1,\ldots,i_{N-1}) \, \bx^{\bnn}(i_{N-1},\ldots,i_1,i)
- \sum_{k=1}^{N-2} r^{k}(i,i_1,\ldots,i_k) \, \bx^{\bk+\1}(i_k,\ldots,i_1,i) \notag\\
&+\sum_{k=1}^{N-1} \mu^{k}(i,i_1,\ldots,i_k) \sum_{l=1}^{k}
\bx^{\bl}(i_k,\ldots,i_{k-l+1}) \,
\bx^{\bk+\1-\bl}(i_{k-l},\ldots,i_1,i),
\end{align}
where
\begin{equation*}
M=\sum_{k=1}^{N-2} \Big(\sum_{l=1}^{N-1-k}
\mu^{k+l}(i,i_1,\ldots,i_{k+l}) \,
\bx^{\bl}(i_{k+l},\ldots,i_{k+1})\Big) \,
\bx^{\bk+\1}(i_k,\ldots,i_1,i).
\end{equation*}
Moreover, a simple change of index allows to write
$$
M=\sum_{q=2}^{N-1} \mu^{q}(i,i_1,\ldots,i_q) \, \sum_{l=1}^{q-1}
\bx^{\bl}(i_{q},\ldots,i_{q-l+1}) \,
\bx^{\bq+\1-\bl}(i_{q-l},\ldots,i_1,i),
$$
and hence (\ref{eq:exp-I-r0-dx-N}) simplifies into
\begin{multline}\label{eq:exp-der-I-r0-dx-N}
\der(\cj(r^{0} \, dx))  \\= -\sum_{k=0}^{N-2}
r^{k}(i,i_1,\ldots,i_k) \bx^{\bk+\1}(i_k,\ldots,i_1,i)
-\der\mu^{N-1}(i,i_1,\ldots,i_{N-1})
\bx^{\bnn}(i_{N-1},\ldots,i_1,i).
\end{multline}
It is now readily checked that the operator $\laa$ can be applied to the latter increment whenever $x\in\cac_1^{\ga}$ and generates a weakly geometric rough path. Putting together relations~(\ref{eq-expr-I-m-dx-N}) and (\ref{eq:exp-der-I-r0-dx-N}) we thus end up with expression (\ref{eq:dcp-mdx-N}) for the integral $\cj(m\, dx)$.

\smallskip

The analytic bounds are now a matter of standard calculations, and are left to the reader for sake of conciseness.

\end{proof}

\subsection{It\^{o}-Stratonovich formula}
\label{sec:stoch-roughsub}

Now that we know how to define integrals of controlled processes with respect to $x$, our change of variable formula for $f(x)$ is obtained quite in the same way as in the second order setting. The formula can then be read as follows:
\begin{theorem}\label{ito-strato-N}
For a given $\ga>0$ with $\lfloor 1/\ga \rfloor=N$, let $x$ be a
process satisfying the regularity, multiplicative and geometric
hypotheses of Section \ref{sec:intro}. Let $f$ be  a
$C^{N+1}(\R^{d};\R)$ function. Then
\begin{equation}\label{itoform1}
\lc \der(f(x)) \rc_{st}= \cj_{st}\lp \nabla f(x) \, dx \rp
=\ist \lla  \nabla f(x_u), \, dx_u \rra_{\R^d},
\end{equation}
where the integral above has to be understood in the sense of
Theorem \ref{prop:intg-mdx-N}.
Moreover, 
\begin{align}\label{eq:def-rsums-strato}
&\cj_{st}\lp \nabla f(x) \, dx \rp=\lim_{|\Pi_{st}|\to
0}\sum_{q=0}^{n-1}\Big[
\partial_i f(x_{t_{q}}) \, \bx^{\1}_{t_{q}, t_{q+1}}(i)
+ \sum_{k=1}^{N-1} \partial_{i_k\ldots i_1
i}^{k+1}f(x_{t_q}) \, \bx^{\bk+\1}_{t_{q}t_{q+1}}(i_k,\ldots
,i_1,i)
\Big] \notag\\
&=\lim_{|\Pi_{st}|\to 0}\sum_{q=0}^{n-1}\Big[
\partial_i f(x_{t_{q}}) \, \bx^{\1}_{t_{q} t_{q+1}}(i)
+ \sum_{k=1}^{N-1} \frac{1}{k!}\partial_{i_k\ldots i_1
i}^{k+1}f(x_{t_q}) \,
\bx^{\1}_{t_{q}t_{q+1}}(i_k)\,\cdots\,\bx^{\1}_{t_{q}t_{q+1}}(i_1)\,\bx^{\1}_{t_{q}t_{q+1}}(i)
\Big]
\end{align}
for any $0\le s<t\le T$,
where the limit is taken over all partitions
$\Pi_{st} = \{s=t_0,\dots,t_n=t\}$
of $[s,t]$, as the mesh of the partition goes to zero.
\end{theorem}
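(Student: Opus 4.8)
The plan is to mirror, at the level of order $N$, exactly the comparison argument used in Proposition \ref{prop:ito-strat-form-2}. The strategy has three ingredients that are already in place: first, Proposition \ref{prop:cp-weak-phi-N} tells us that $\nabla f(x)$ is a weakly controlled path, identifying its coefficients $\mu^k_s(i,i_1,\ldots,i_k)=\partial^{k+1}_{i_k\cdots i_1 i}f(x_s)$; second, Theorem \ref{prop:intg-mdx-N}, and in particular the Riemann-sum representation \eref{eq:rsums-imdx-N}, gives a concrete limiting expression for $\cj_{st}(\nabla f(x)\,dx)$; third, a Taylor expansion of $f$ provides an independent expression for $[\der(f(x))]_{st}$. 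The whole point is then to check that these two expressions coincide.

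First I would apply Proposition \ref{prop:cp-weak-phi-N} to $g=\nabla f$ (componentwise), which places $\nabla f(x)$ in $\cq_{\ga,b}(\R^d)$ with $\mu^k_s(i,i_1,\ldots,i_k)=\partial^{k+1}_{i_k\cdots i_1 i}f(x_s)$ and remainders $r^k$ of the required H\"older regularity. Feeding these coefficients into the representation \eref{eq:rsums-imdx-N} of Theorem \ref{prop:intg-mdx-N} yields
\begin{equation*}
\cj_{st}(\nabla f(x)\,dx)=\lim_{|\Pi_{st}|\to 0}\sumq\Big[\partial_i f(x_{t_q})\,\bx^{\1}_{t_q t_{q+1}}(i)+\sum_{k=1}^{N-1}\partial^{k+1}_{i_k\cdots i_1 i}f(x_{t_q})\,\bx^{\bk+\1}_{t_q t_{q+1}}(i_k,\ldots,i_1,i)\Big],
\end{equation*}
which is precisely the first displayed line of \eref{eq:def-rsums-strato}. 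In parallel, a Taylor expansion to order $N$ of each increment $f(x_{t_{q+1}})-f(x_{t_q})$, exactly as in \eref{eq:taylor-f-x-N}, gives
\begin{equation*}
f(x_t)-f(x_s)=\sumq\sum_{k=1}^{N}\frac{1}{k!}\,\partial^{k}_{i_1\cdots i_k}f(x_{t_q})\prod_{j=1}^{k}\bx^{\1}_{t_q t_{q+1}}(i_j)+\sumq\rho_{t_q t_{q+1}},
\end{equation*}
where the last term collects the $(N+1)$-th order Taylor remainders. Using that $f\in C^{N+1}$ and $x$ is $\ga$-H\"older, each $\rho_{t_q t_{q+1}}$ is bounded by $C|t_{q+1}-t_q|^{(N+1)\ga}$; since $(N+1)\ga>1$, the sum of these remainders vanishes in the limit, just as in the order-2 proof.

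The heart of the matter is reconciling the two expressions, and this is where the geometricity assumption \eref{eq:geom-rough-path} does the work. In the Taylor sum the generic term involves the symmetric product $\prod_{j=1}^{k}\bx^{\1}_{st}(i_j)$, while in the integral expression it involves the single ordered iterated integral $\bx^{\bk}_{st}$. Invoking \eref{eq:geom-rough-path} repeatedly rewrites $\prod_{j=1}^{k}\bx^{\1}_{st}(i_j)=\sum_{\si\in\Sigma_k}\bx^{\bk}_{st}(i_{\si(1)},\ldots,i_{\si(k)})$, and then the symmetry of the partial derivatives $\partial^{k}_{i_1\cdots i_k}f$ under permutation of indices collapses the factor $\frac{1}{k!}\sum_{\si\in\Sigma_k}$ into a single term, exactly as carried out in \eref{eq:prod-x1-xk}. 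This identifies the $k$-th summand of the Taylor expansion with $\partial^{k}_{i_k\cdots i_1}f(x_{t_q})\,\bx^{\bk}_{t_q t_{q+1}}$; a relabeling $k\mapsto k+1$ (the integral carries the extra index $i$ coming from $dx$) matches it term by term with the integral representation. The main obstacle, then, is purely bookkeeping: keeping the index conventions consistent between the Taylor terms (which produce $\bx^{\bk}$ with $k$ running $1$ to $N$) and the integral terms (which produce $\bx^{\bk+\1}$ with $k$ running $0$ to $N-1$), and verifying that the geometric-plus-symmetry reduction holds uniformly in $q$ so that it survives passage to the limit. Since all the analytic estimates needed to justify the limits are already granted by Theorem \ref{prop:intg-mdx-N} and Proposition \ref{prop:cp-weak-phi-N}, no new hard analysis is required; the combinatorial identification via \eref{eq:geom-rough-path} is the only genuinely substantive step, and the second equality in \eref{eq:def-rsums-strato} is then just the same computation read in reverse.
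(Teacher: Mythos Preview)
Your proposal is correct and follows essentially the same approach as the paper: you identify $\nabla f(x)$ as a weakly controlled path via Proposition \ref{prop:cp-weak-phi-N}, feed its coefficients into the Riemann-sum representation \eref{eq:rsums-imdx-N}, and then compare with the Taylor expansion of $f(x_t)-f(x_s)$ using the geometric relation \eref{eq:geom-rough-path} together with Schwarz symmetry, with the remainder vanishing because $(N+1)\ga>1$. The paper's proof is just a terse summary of exactly these three steps.
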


\begin{proof}
The proof goes exactly along the same lines as for Proposition
\ref{prop:ito-strat-form-2}. The first expression for $\cj(\nabla
f(x)dx)_{st}$ given in (\ref{eq:def-rsums-strato}) is a consequence
of the decomposition as a weakly controlled path of $\nabla f(x)$
and Theorem \ref{prop:intg-mdx-N}. The second one follows from the
first one by using  Schwarz rule and the geometric property
(\ref{eq:geom-rough-path}).

\smallskip

On the other hand, applying Taylor's formula up to order $N$ to the
decomposition
$$f(x_t)-f(x_s)=\sumq f(x_{t_{q+1}})-f(x_{t_q}),$$
comparing it with (\ref{eq:def-rsums-strato}) and using that
$(N+1)\ga>1$, one obtain easily the Stratonovich type formula.

%
%
%

\end{proof}

\section{Skorohod type formula via Malliavin calculus}
\label{sec:sko-mall}

We take now a completely different direction in our considerations: the pointwise point of view which had been adopted previously is abandoned in this section, and we try to construct an integral with respect to a (Gaussian) process $x$ by means of stochastic analysis tools. We then prove that for any $0\le s<t<\infty$, the function $\1_{[s,t)}\nabla f(x)$ is in the domain of an extended divergence operator with respect to $x$, and prove an associated Skorohod type formula. As we shall see, this mainly stems from an extension of \cite{CH} to the $d$-dimensional case, which is allowed thanks to the symmetries of $\nabla f(x)$.

\subsection{Preliminaries on Gaussian processes}
\label{Mall}

>From now on, we specialize our setting to a centered Gaussian
process $x=(x(1),\ldots,x(d))$ with i.i.d coordinates, and
covariance function
\begin{equation}\label{eq:def-cov-x}
R_{st}:=\be[x_s(1)x_t(1)],
\quad\mbox{and}\quad
R_t:=\be[|x_t(1)|^2]=R_{tt},
\qquad s,t\in [0,T].
\end{equation}
We will add later some hypotheses on these functions.
We can also assume that $x_0(j)=0$.
\def\cS{{\mathcal S}}

\smallskip

The Gaussian integration theory is based on a completion (in $L^2(\oom)$) of elementary integrals with respect to $x$, which can be summarized as follows (see \cite{Nu-bk} for more details): consider the space of $d$-dimensional elementary functions
\begin{eqnarray*}
\cS=\Big\{ f=(f_1,\ldots,f_d);\,\,f_j=\sum_{i=0}^{n_j-1} a_i^j
\1_{[t_i^j, t_{i+1}^j)}\,, \quad
0=t_0<t_1^j<\cdots<t_{n_j-1}^j<t_{n_j}^j=T,\\
\text{ for }j=1,\ldots,d\Big\}\,.
\end{eqnarray*}
For any element $f$ in $\cS$, we define the integral of first order
of $f$ with respect to $x$ as
\[
I_1(f):=\sum_{j=1}^d\sum_{i=0}^{n_j-1} a_i^j (x_{t_{i+1}^j}(j)
-x_{t_i^j}(j))\,.
\]

\smallskip

For $\theta:\R\rightarrow \R$, and $j\in\{1,\ldots,d\}$, denote by
$\theta^{[j]}$ the function with values in $\R^d$ having all the
coordinates equal to zero, except the $j\textsuperscript{th}$ coordinate which is equal
to $\theta$. It is readily seen that
$$\be[I_1(\1_{[0,s)}^{[j]})I_1(\1_{[0,t)}^{[k]})]=\1_{(j=k)}R_{st}.$$
 So, we can
define for some indicator functions of $\cs$ the following symmetric
and semi-definite  form
$$\langle \1_{[0,s)}^{[j]},\1_{[0,t)}^{[k]}\rangle_{\cs}=\1_{(j=k)}R_{st},$$
and extend it to all elements of $\cs$ by linearity. If we identify
two functions $f$ and $g$ in $\cs$ when $\langle
f-g,f-g\rangle_{\cs}=0$, then $\langle\cdot,\cdot\rangle_{\cs}$
becomes an inner product on $\cs$ (actually, on the quotient space
obtained by this identification). Therefore, for $f$ and $g$ in
$\cs$ we have that
$$\be[I_1(f)I_1(g)]=\langle f,g\rangle_{\cs}$$
and $I_1$ defines an isometric map from $\cs$, endowed with the
inner product $\langle\cdot,\cdot\rangle_{\cs}$ into a subspace of
$L^2(\Omega)$. This map can be extended in the standard way to an
isometric map, denoted also as $I_1$, from a real Hilbert space that
we will denote by $\ch$ into a closed subspace of $L^2(\Omega)$.
>From now on, denote the inner product of this extended isometry by
$\langle \cdot,\cdot\rangle_{\ch}.$ We will assume that $\ch$ is a
separable Hilbert space (which is satisfied whenever $R_{st}$ is
continuous).

\smallskip

Let $\{ e_1,   e_2, \cdots \}$  be an orthonormal basis  of
$\mathcal H$ and let  $\hatotimes$ denote  the symmetric tensor
product.  Then
\begin{equation}\label{e.2.2}
f_n=\sum_{\rm { finite}} f_{i_1, \cdots, i_n}
e_{i_1}\hatotimes\cdots \hatotimes e_{i_n}, \quad  f_{i_1, \cdots,
i_n}\in \R
\end{equation}
is an element of $\mathcal H^{\hatotimes n}$ with the Hilbert norm
\begin{equation}\label{eq:def-norm-Hn}
\|f_n\|_{\mathcal H^{\hatotimes n}}^2 =\sum_{\rm { finite}}
|f_{i_1, \cdots, i_n}|^2\, .
\end{equation}
Moreover, $\mathcal H^{\hatotimes n}$  is the completion  of  all the elements like (\ref{e.2.2}) with respect to the norm~(\ref{eq:def-norm-Hn}).

\smallskip

For an element $f_n\in \mathcal H^{\hatotimes n}$, the multiple
It\^o integral of order $n$ is well-defined. First, any  element of
the form given by (\ref{e.2.2}) can be rewritten as
\begin{equation}\label{htensor}
f_n=\sum_{\rm { finite}} f_{j_1 \cdots j_m} e_{j_1}^{\hatotimes
k_1}\hatotimes\cdots \hatotimes e_{j_m}^{\hatotimes
k_m},\end{equation} where the $j_1,\ldots, j_m$ are different and
$k_1+\cdots+k_m=n$. Then, if $f_n\in \ch^{\hatotimes n}$  is given under the form (\ref{htensor}), define its multiple integral as:
\begin{equation}\label{eq:def-mult-intg}
I_n(f_n)=\sum_{\rm { finite}} f_{j_1, \cdots, j_m} H_{k_1}( I_1(e_{j_1}))\cdots
H_{k_m}(I_1( e_{j_m})),
\end{equation}
where $H_k$ denotes the $k$-th normalized
Hermite polynomial given by
$$H_k(x)=(-1)^k
e^{\frac{x^2}2}\frac{d^k}{dx^k}e^{-\frac{x^2}2}=\sum_{j\le k/2}
\frac{(-1)^j k!}{2^j\,j!\,(k-2j)!} x^{k-2j}.$$ It holds that the
multiple integrals of different order are
orthogonal and that
$$\be|I_n(f_n)|^2=n!\,\|f_n\|_{\ch^{\hatotimes n}}^2.$$
This last isometric property allows to extend the multiple integral
for a  general
$f_n\in\ch^{\hatotimes n}$
by $L^2(\Omega)$ convergence (notice once again that this kind of closure is different in spirit from the pathwise convergences considered at Sections \ref{sec:strato-order-2} and \ref{sec:strato-order-N}). Finally, one can define the
integral of $f_n\in\ch^{\otimes n}$ by putting
$I_n(f_n):=I_n(\tilde f_n),$
where $\tilde f_n\in\ch^{\hatotimes n}$ denotes the symmetrized version of
$f_n$. Moreover, the
chaos expansion theorem states that any square integrable random
variable $F\in L^2(\Omega,\mathcal G,P)$, where $\mathcal G$ is the
$\sigma$- field generated by $x$,
 can be written as
\begin{equation}\label{chaos}
F=\sum_{n=0}^\infty I_n(f_n)
\quad\mbox{with}\quad
\EE[F^2]=\sum_{n=0}^\infty  n! \|f_n||_{\mathcal H^{\hatotimes
n}}^2\,.
\end{equation}

\smallskip

We will introduce now the (iterated) derivative and  divergence
operators of the Malliavin calculus. We denote by $\mathcal
C^{\infty}_p(\R^n)$ the set of infinitely continuously
differentiable functions $f:\R^n\rightarrow \R$ such that $f$ and
all its partial derivatives have polynomial growth. Let $\textbf S$
denote the class of smooth random variables of the form
\begin{equation}\label{smooth}F=f(I_1(h_1),\ldots,  I_1(h_n)),
\end{equation}
where
$f\in\mathcal C^{\infty}_p(\R^n)$, $h_1,\ldots, h_n$ are in $\ch$,
and $n\ge 1$. The derivative of a smooth random variable $F\in\textbf S$ of the form
(\ref{smooth}) is the $\ch$-valued random variable given by
\begin{equation}\label{deriv}
DF=\sum_{i=1}^n \partial_i f(I_1(h_1),\ldots,  I_1(h_n))h_i,
\end{equation}
where $\partial_i$ denotes as usual $\frac{\partial}{\partial x_i}$. One can also define
for $h\in\ch$ and $F\in\mathbf S$ the derivative of $F$ in the
direction of $h$ as
$D_hF=\langle F,h\rangle_{\ch}.$

\smallskip

The iteration of the operator $D$  is defined in such a way that
for a smooth random variable $F\in\textbf S$ the iterated derivative $D^k F$ is
a random variable with values in $\ch^{\otimes k}$. We also consider
for $h^k\in\ch^{\otimes k}$ the $k$-th derivative of $F$ in the
direction of $h^k$ defined as
$$D^k_{h^k}F=\langle D^kF,h^k\rangle_{\ch^{\otimes k}}.$$
Let us fix now a notation for the domain of the iterated derivative $D^k$:
for every $p\ge 1$ and any natural number $k\ge 1$ we introduce the
seminorm on $\textbf S$ given by
$$\|F\|_{k,p}=\left[\be\big(|F|^p\big)+\sum_{j=1}^k \be\big(\|D^j
F\|_{\ch^{\otimes j}}^p\big)\right]^{\frac1{p}}.$$ It is well-known that the operator
$D^k$ is closable from $\textbf S$ into $L^p(\Omega;\ch^{\otimes
k})$.  We will denote by $\mathbb D^{k,p}\,$ the completion of the
family of smooth random variables $\textbf S$ with respect to the
norm $\|\cdot\|_{k,p}\,$. We will also refer the space $\mathbb
D^{k,2}$ as the domain of the operator $D^k$ and denote it by
$\dom\, D^k$. If $F$ has the chaotic representation (\ref{chaos}),
we have that
$$\be\Big(\|D^kF\|_{\ch^{\otimes
k}}^2\Big)=\sum_{n=k}^{\infty}n(n-1)\cdots(n-k+1)\,n!\|f_n\|_{\ch^{\hatotimes
n}}^2$$
and a useful characterization of  $\dom\, D^k$ is the following: $F\in\dom\, D^k$ if and only if
$$\sum_{n=1}^{\infty} n^k\,n!\,\|f_n\|_{\ch^{\hatotimes
n}}^2<\infty.$$

We will denote by $\delta^{\di}$ the adjoint of the operator $D$
(this operator is also referred as the {\it divergence operator}) and more generally,  we denote by $\delta^{\di k}$ the adjoint of $D^k$. The operator $\delta^{\di k}$ is closed
and its domain, denoted by $\dom \,\delta^{\di k}$, is the set of
$\ch^{\otimes k}$-valued square integrable random variables $u\in
L^2(\Omega;\ch^{\otimes k})$ such that
$$|\be(\langle D^k F,u\rangle_{\ch^{\otimes k}}|\le C\,\|F\|_2,$$
for all $F\in\dom\, D^k$, where $C$ is some constant depending on
$u$. Moreover, for $u\in\dom\,\delta^{\di k}$,  $\delta^{\di k}(u)$
is the element of $L^2(\Omega)$ characterized by the duality
relationship:
\begin{equation}\label{duality}
\be(F\delta^{\di k}(u))=\be(\langle D^k F,u\rangle_{\ch^{\otimes
k}}),
\end{equation}
for any $F\in\dom\, D^k$. For $u\in\dom \,\delta^{\di}$, the random variable $\delta^{\di}(u)$ is usually called \emph{Skorohod integral} of $u$, because it coincides with the usual integral of $u$ with respect to $x$ for a large class of elementary processes $u$ (see \cite{Nu-bk} for further details).

\subsection{An operator associated to $\mathbf{x}$}

Along this section we will consider a $d$-dimensional continuous  process satisfying the following set of assumptions:
\begin{hypothesis}\label{hyp:cov-x}
The process $x=(x(1),\ldots,x(d))$ is a centered Gaussian process with i.i.d. coordinates. Letting $R_{st}$ and $R_{t}$ being defined as in (\ref{eq:def-cov-x}), we suppose that those two functions  are continuous and the following two conditions hold:
\begin{enumerate}
\item The variance function
$R_t:=R_{tt}$  is differentiable at any point $t\in (0,T)$ and
satisfies that
$$\int_0^T |R'_t|\,dt<\infty.$$
\item The first partial derivative $\partial_s R_{st}$ of $R_{st}$ is
well-defined a.e. on $[0,T]^2$ and
verifies
\begin{equation}\label{eq-div-2}
\iott\iott\lln \partial_s R_{sy}\rrn ds dy<\infty.
\end{equation}
\end{enumerate}
\end{hypothesis}
\noindent
We will try now to identify a useful operator for our future Gaussian computations.

\smallskip

Let  $\vp=(\vp(1),\ldots,\vp(d))\in\lp\mathcal D_T\rp^d$, where
$\mathcal D_T$ is the space of $\mathcal C^{\infty}$ functions with
compact support contained in $(0,T)$. We have that (see for instance
\cite{jo}, where the 1-dimensional case is considered) that
$\vp\in\ch$ and that
$$I_1(\vp)=-\iott\langle x_s,\,\vp'_s\rangle\,ds,$$
where $\langle\cdot,\cdot\rangle$ denotes the ordinary Euclidean
product in $\R^d$. Moreover,  $\lp\cald\rp^d$ is a dense subset of
$\ch$. From now on, we use also the notation $x(f)$ for $I_1(f)$.

\smallskip

Given a function $h:[0,T]\to\R$, recall that $h\crj$ denotes the
function with values in $\R^d$ in which all the coordinates except
the $j$-th one are equal to $0$ and the $j$-th coordinate equals to
$h$. Therefore, for $\beta\in\cald$ and $0\le a<b\le T$, we have
that
\begin{eqnarray}\label{eq-div-1}
\lla  \1_{[a,b)}\crl\,,\,\beta\crj\rra_{\ch}&=&\be\lc
I_1(\1_{[a,b)}\crl)I_1(\beta\crj)\rc=-\be\lc\lp x_b(l)-x_a(l)\rp\iott\langle x_t\,,\lc\beta\crj \rc'_t\rangle\,dt\rc\nonumber\\
&=&-\,\1_{(j=l)}\,\iott\lp R_{bt}-R_{at}\rp
\beta_t'\,dt=-\,\1_{(j=l)}\,\iott\lp\int_a^b\partial_{s}R_{st}\,ds\rp\beta'_t\,dt\nonumber\\
&=&-\,\1_{(j=l)}\,\int_a^b\lp\iott
\partial_{s}R_{sy}\,\beta'_y\,dy\rp ds.
\end{eqnarray}
We will consider the first iterated integral appearing  on the right
hand side of (\ref{eq-div-1}) as a linear operator defined on
$\mathcal D_T$. That is, we consider for $s\in [0,T]$ and $\beta
\in\mathcal D_T $, the following function:
$$\mathbf A\beta(s)=:-\iott \partial_{s}R_{sy}\,\beta_y'\,dy.
$$
We will suppose from now on that the following hypothesis holds.
\begin{hypothesis}\label{l2-integr}
For any $\beta\in\mathcal D_T$, $\mathbf A\beta\in L^2([0,T])$.
\end{hypothesis}
\begin{remark}
Condition (\ref{eq-div-2}) on $R_{st}$, stated in Hypothesis \ref{hyp:cov-x},
implies that $\mathbf A\beta$ belongs to $L^1([0,T])$ whenever $\beta\in\mathcal{D}_T$. We have
imposed the additional condition  $A\beta\in L^2([0,T])$ in order to guarantee the
integrability of many terms appearing in the sequel.
Although one can  weaken Hypothesis~\ref{l2-integr}, this would complicate some
of the next  statements. We have thus chosen to impose it for the sake of
simplicity.
\end{remark}

\begin{example}\label{ex:fbm}
Hypothesis \ref{l2-integr} is satisfied by the fractional Brownian
motion. In fact,
\begin{eqnarray*}
\mathbf A\beta(s)&=&-\iott \frac{\partial}{\partial s} R_{sy}\,
\beta'(y)\,dy=-\iott H\lp s^{2H-1}-|s-y|^{2H-1}\text{sign}(s-y)\rp
\, \beta'(y)\,dy\\&=& \iott H\lp |s-y|^{2H-1}\text{sign}(s-y)\rp
\beta'(y)\,dy,
\end{eqnarray*}
because
$\beta\in\mathcal D_T$. And from this, it is easily seen that
$\mathbf A \beta\in L^{\infty}([0,T])$ if $\beta\in\mathcal{D}_T$.
\end{example}

\smallskip

Let us now relate our operator $A$ to the inner product in $\ch$:
equation  (\ref{eq-div-1})  tells us that for any elementary function
$g=(g(1),\ldots,g(d))\in\mathcal S$ and $\beta\in\cald$ we have that
$$\lla\beta\crj\,,\,g(l)\crl\rra_\ch=\1_{(j=l)}\iott g_s(l)\mathbf A\beta(s)\,ds.$$
Since for $\vp\in\lp\cald\rp^d,\,$ $g\in\mathcal S$, we have
$\vp=\sum_{j=1}^d \vp(j)\crj$ and $g=\sum_{l=1}^d g(l)\crl$,
we obtain that
\begin{equation}\label{eq-div-3-smooth}
\lla\vp\,,\,g\rra_\ch=\sum_{j=1}^d\iott g_s(j){\mathbf
A}\vp(j)(s)\,ds=\iott \lla g_s\,,\,{\mathbf A}\vp(s)\rra\,ds,
\end{equation}
where we use the notation ${\mathbf A}\vp=({\mathbf
A}\vp(1),\ldots,{\mathbf A}\vp(d))$. Extending this last relation by continuity, the following useful representation for the inner product in $\ch$ is readily obtained:
\begin{lemma}
For any $g\in\ch\cap (L^2([0,T]))^d$ and $\vp\in\cald$, one can write
\begin{equation}\label{eq-div-3}
\lla\vp\,,\,g\rra_\ch=\iott \lla g_s\,,\,{\mathbf A}\vp(s)\rra\,ds,
\end{equation}
where $\langle\cdot,\cdot\rangle$ stands for the inner product in $\R^d$.
\end{lemma}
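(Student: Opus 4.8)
The identity has in fact already been obtained for elementary integrands: relation (\ref{eq-div-3-smooth}) states precisely that $\lla\vp\,,\,g\rra_\ch=\iott \lla g_s\,,\,{\mathbf A}\vp(s)\rra\,ds$ for every $g\in\cs$, so the only task is to propagate it from $\cs$ to the larger class $\ch\cap(L^2([0,T]))^d$. The plan is to read both sides of (\ref{eq-div-3}) as continuous linear functionals of $g$ and to conclude by a density argument. Fix $\vp=(\vp(1),\ldots,\vp(d))\in(\cald)^d$, so that $\vp\in\ch$; by Hypothesis \ref{l2-integr} each component ${\mathbf A}\vp(j)$ belongs to $L^2([0,T])$, hence ${\mathbf A}\vp\in(L^2([0,T]))^d$ and the right hand side $g\mapsto\iott \lla g_s\,,\,{\mathbf A}\vp(s)\rra\,ds=\lla {\mathbf A}\vp\,,\,g\rra_{(L^2([0,T]))^d}$ is continuous for the $(L^2([0,T]))^d$ topology, while the left hand side $g\mapsto\lla\vp\,,\,g\rra_\ch$ is continuous for the $\ch$ topology by Cauchy--Schwarz.

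I would then pick a sequence $g_n\in\cs$ approximating $g$. Continuity of the inner product of $\ch$ gives $\lla\vp\,,\,g_n\rra_\ch\to\lla\vp\,,\,g\rra_\ch$ whenever $g_n\to g$ in $\ch$, while $\lla {\mathbf A}\vp\,,\,g_n\rra_{(L^2([0,T]))^d}\to\lla {\mathbf A}\vp\,,\,g\rra_{(L^2([0,T]))^d}$ whenever $g_n\to g$ in $(L^2([0,T]))^d$. Since (\ref{eq-div-3-smooth}) equates the two sides for each $g_n\in\cs$, letting $n\to\infty$ would yield the announced formula (\ref{eq-div-3}), provided the approximating sequence can be chosen so as to converge to $g$ in both norms simultaneously.

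The main obstacle is exactly this simultaneous convergence. For the irregular processes we have in mind (typically $\ga\le1/4$) the norms $\norm{\cdot}_\ch$ and $\norm{\cdot}_{(L^2([0,T]))^d}$ are not comparable on $\cs$, and neither inclusion $\ch\hookrightarrow(L^2([0,T]))^d$ nor $(L^2([0,T]))^d\hookrightarrow\ch$ holds; this is precisely why the statement is restricted to the intersection. To resolve it, the key point to establish is that $\cs$ is dense in $\ch\cap(L^2([0,T]))^d$ for the graph norm $\norm{\cdot}_\ch+\norm{\cdot}_{(L^2([0,T]))^d}$. The cleanest route is to embed both $\ch$ and $(L^2([0,T]))^d$ continuously into the space of vector valued distributions $(\mathcal D'((0,T)))^d$, so that limits become unique and a sequence which is Cauchy for the graph norm possesses one and the same limit in the two spaces. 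The approximants of a given $g$ can then be produced by mollifying $g$ and discretizing it along a refining family of partitions; one checks in the usual way that such step functions tend to $g$ in $(L^2([0,T]))^d$, and, using the continuity of ${\mathbf A}$ and the regularity of $R$ granted by Hypothesis \ref{hyp:cov-x}, that they are also Cauchy in $\ch$.

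Once this density is in hand, the functional $g\mapsto\lla\vp\,,\,g\rra_\ch-\lla {\mathbf A}\vp\,,\,g\rra_{(L^2([0,T]))^d}$ is continuous on $\ch\cap(L^2([0,T]))^d$ for the graph norm and vanishes on the dense subset $\cs$; it therefore vanishes identically, which is the assertion (\ref{eq-div-3}).
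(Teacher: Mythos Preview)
Your approach is the same one the paper uses: the paper simply says ``Extending this last relation by continuity'' from the elementary case (\ref{eq-div-3-smooth}) and declares the lemma ``readily obtained,'' with no further argument. You are following exactly this density/continuity strategy, but you are considerably more careful than the paper about what it actually requires---in particular, you correctly flag that one needs simultaneous approximation in $\ch$ and in $(L^2([0,T]))^d$, and you sketch how to obtain it via density of $\cs$ for the graph norm after embedding both spaces into $(\mathcal D'((0,T)))^d$. The paper does not address this point at all, so your write-up is in fact more complete than the original.
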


Going back to our example \ref{ex:fbm}, notice that expression (\ref{eq-div-3})  is similar to the  following one
pointed out in \cite{CH} for the one-dimensional fractional Brownian motion with
Hurst parameter $H<1/2$:
$$
\lla \vp\,,\,g\rra_\ch=c_H^2\iott g(s)\mathbf D_{+}^\alpha \mathbf
D_{-}^\alpha\vp(s)\,ds,$$ where $\alpha=\frac12-H$; $\mathbf
D_{+}^\alpha$  and $\mathbf D_{-}^\alpha$ are the Marchaud
fractional derivatives (see \cite{SKM} for more details about these
objects), and $c_H$ is a certain positive  constant.

\subsection{Extended divergence operator}
Let us take up here the notations of Section \ref{Mall}. Having noticed that fBm
 gives rise to an operator $\mathbf D_{+}^\alpha \mathbf D_{-}^\alpha$ which is a particular case of our operator $\mathbf{A}$ (see Example \ref{ex:fBm2}),
  one can naturally try  to define an extension of the operator
$\ddi$ using similar arguments to those of \cite{CH}. The idea is to
consider first $u\in\rm{Dom}\,\ddi\cap \lp
L^2(\oom\times[0,T])\rp^d$ and $F=\hnj$ where $H_n$ is the n-th
normalized Hermite polynomial, and $\vp\in\lp\cald\rp^d$. Since $\lp
L^2(\oom\times[0,T])\rp^d\equiv L^2\lp\oom;L^2([0,T];\R^d)\rp$ and
$\dom\, \ddi\subset L^2(\oom;\ch)$, we have that $u\in \lp
L^2(\oom\times[0,T])\rp^d\cap\ch$ almost surely. Moreover, $D
\hnju=\hnju\vp\in\lp\cald\rp^d$, a.s.. So, using (\ref{eq-div-3}),
the usual duality relationship between $D$ and $\ddi$ can be written
in the following way:
\begin{multline}\label{eq:ibp-extended}
\be\lc\ddi(u)\hnj\rc=\be\lc\lla u\,,\,D\hnj\rra_\ch\rc=\be\lc\hnju\lla u\,,\vp\rra_\ch\rc \\
=\be\lc\hnju\iott\lla u_s \,,\,{\mathbf
A}\vp(s)\rra\,ds\rc=\iott\lla \be\lc \hnju u_s\rc\,,\, {\mathbf
A}\vp(s)\rra\,ds,
\end{multline}
and this motivates the following definition.
\begin{definition}\label{def:ext-divergence}
We say that $u\in \dom^*\ddi$ if $u\in\lp L^2(\oom\times[0,T])\rp^d$
and there exists an element of $L^2(\oom)$, that will be denoted by
$\ddi(u)$, such that for any $\vp\in\lp\cald\rp^d$ and any $n\ge 0$,
the following is satisfied: \beq\label{eq-div-4}
\be\lc\ddi(u)\hnj\rc= \iott\lla \be\lc \hnju u_s\rc\,,\, {\mathbf
A}\vp(s)\rra\,ds.
\eeq
\end{definition}
\begin{remark}\label{rmk:density}
Since the linear span of the set $\{H_n(\vp):\,n\ge 0,\,\vp\in
(\mathcal D_T)^d\}$ is dense in $L^2(\oom)$, the element
$\delta^{\di}(u)$, if it exists, is uniquely defined.
\end{remark}
\begin{remark}\label{rmk:closure-ext-divergence}
One can easily see from our definition of the extended divergence
that it is a closed operator in the following sense: if $\{u^k\}_{k\in\N}\subset \dom^*\ddi$ and satisfies \textit{(1)} $u^k\rightarrow u$ in $\lp L^2(\oom\times[0,T])\rp^d$ and \textit{(2)} $\ddi(u^k)\rightarrow X$ in $L^2(\oom)$, then $u\in\dom^*\ddi$ and $\ddi(u)=X$.
\end{remark}

We show in the following proposition that the extended operator
$\ddi$ defined above is actually an extension of the
divergence operator of the Malliavin calculus.
\begin{proposition}
The domain $\dom^*\ddi$ is an extension of $\dom\,\ddi$ in the following sense:
$$\dom\,\ddi\cap\lp L^2(\oom\times[0,T])\rp^d=\dom^*\ddi\cap
L^2(\oom;\ch).$$
Furthermore, the extended operator $\ddi$ restricted to
$\dom\,\ddi\cap\lp L^2(\oom\times[0,T])\rp^d$ coincides with the
standard divergence operator.
\end{proposition}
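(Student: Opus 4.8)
The plan is to establish the two inclusions separately and then read off the identity of the operators; the bridge between the Malliavin divergence and its extension $\ddi$ is the inner-product representation \eref{eq-div-3}, which is exactly what converts the abstract duality \eref{duality} into the defining relation \eref{eq-div-4}. The role of the intersection with $L^2(\oom;\ch)$ is crucial throughout: it guarantees that $u(\om)\in\ch\cap\lp L^2([0,T])\rp^d$ for almost every $\om$, which is precisely the hypothesis under which \eref{eq-div-3} can be applied pathwise to the random vector $u$. In both directions the argument amounts to running the motivating computation \eref{eq:ibp-extended} forward or backward on the test variables $F=\hnj$, $\vp\in\lp\cald\rp^d$.

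For the inclusion $\dom\,\ddi\cap\lp L^2(\oom\times[0,T])\rp^d\subset\dom^*\ddi\cap L^2(\oom;\ch)$, I would start from $u\in\dom\,\ddi$ with $u\in\lp L^2(\oom\times[0,T])\rp^d$; since $\dom\,\ddi\subset L^2(\oom;\ch)$ the membership $u\in L^2(\oom;\ch)$ is automatic. Taking $F=\hnj$, which is a smooth random variable, the duality \eref{duality} gives $\be[\ddi(u)\hnj]=\be[\lla DF,u\rra_\ch]$, and using $D\hnj=\hnju\,\vp$ together with the pathwise representation \eref{eq-div-3} for $\lla\vp,u\rra_\ch$ one rewrites this as $\iott\lla\be[\hnju\,u_s],\mathbf A\vp(s)\rra\,ds$. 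The interchange of $\be$ and $\iott$ is licit by Fubini: Cauchy--Schwarz in $\om$ and then in $s$ bounds the integrand by $\norm{\hnju}_{2}\,\norm{u}_{(L^2(\oom\times[0,T]))^d}\,\norm{\mathbf A\vp}_{L^2([0,T])}$, finite by Hypothesis \ref{l2-integr}. This is exactly \eref{eq-div-4} with $\ddi(u)$ equal to the Malliavin divergence, so $u\in\dom^*\ddi$ and the two operators agree on this $u$.

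For the reverse inclusion I would take $u\in\dom^*\ddi\cap L^2(\oom;\ch)$, set $X:=\ddi(u)\in L^2(\oom)$, and read the same chain of equalities backwards: for every $F=\hnj$ with $\vp\in\lp\cald\rp^d$ one obtains $\be[XF]=\be[\lla DF,u\rra_\ch]$. The point is then to upgrade this identity from the generating family to all $F\in\mathbb D^{1,2}$. Here I would use that the linear span of $\{\hnj:\,n\ge0,\,\vp\in(\cald)^d\}$ coincides, via the triangular relation between powers and Hermite polynomials and the polarization identity $\prod_{i=1}^k x(\vp_i)=\frac1{2^k k!}\sum_{\ep\in\{\pm1\}^k}\ep_1\cdots\ep_k\,x\big(\sum_i\ep_i\vp_i\big)^k$, with the algebra of all polynomials in $\{x(\vp):\vp\in(\cald)^d\}$; since $(\cald)^d$ is dense in $\ch$, this algebra is dense in $\mathbb D^{1,2}$ for the graph norm $\norm{\cdot}_{1,2}$. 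Both functionals $F\mapsto\be[\lla DF,u\rra_\ch]$ (bounded by $\norm{u}_{L^2(\oom;\ch)}\norm{F}_{1,2}$) and $F\mapsto\be[XF]$ are $\norm{\cdot}_{1,2}$-continuous and agree on this dense set, hence agree on all of $\mathbb D^{1,2}$. Consequently $|\be[\lla DF,u\rra_\ch]|=|\be[XF]|\le\norm{X}_{2}\,\norm{F}_{2}$, which is the criterion for $u\in\dom\,\ddi$ with $\delta^{\di}(u)=X$; combined with $u\in\lp L^2(\oom\times[0,T])\rp^d$ this closes the inclusion and confirms that the extension restricts to the Malliavin divergence.

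The main obstacle is the density/continuity upgrade in the reverse inclusion: the defining relation \eref{eq-div-4} only tests $u$ against the Hermite family, whereas membership in $\dom\,\ddi$ requires control against every smooth $F$, so one must verify that the Hermite family spans a $\norm{\cdot}_{1,2}$-dense subspace (through the polarization reduction to polynomials in $x(\vp)$) and that both sides of the duality are continuous in the graph norm. A secondary technical point, recurring in both directions, is the Fubini interchange, which is exactly where Hypothesis \ref{l2-integr} (ensuring $\mathbf A\vp\in L^2([0,T])$) is consumed.
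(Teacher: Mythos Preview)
Your proof is correct and follows the same approach as the paper's own argument: both directions hinge on applying the representation \eref{eq-div-3} pathwise (which requires $u\in\ch\cap(L^2([0,T]))^d$ a.s.) to convert between the abstract duality \eref{duality} and the defining relation \eref{eq-div-4}, followed by a density argument for the reverse inclusion. Your treatment of the density step via polarization and of the Fubini justification via Hypothesis~\ref{l2-integr} is more explicit than the paper's, but the strategy is identical.
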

\begin{proof}
If $u\in \dom\,\ddi\cap\lp L^2(\oom\times[0,T])\rp^d$ then $u\in \lp L^2([0,T])\rp^d\cap \ch$ almost surely. Thus~(\ref{eq-div-3}) can be applied to $u$ and (\ref{eq:ibp-extended}) holds true for $\ddi(u)$ (the standard divergence operator). This proves that $\dom\,\ddi\cap\lp
L^2(\oom\times[0,T])\rp^d\subset\textrm{Dom}^*\ddi\cap
L^2(\oom;\ch)$ and that $\ddi$ is an extension of the standard
divergence operator on $\dom\,\ddi\cap\lp
L^2(\oom\times[0,T])\rp^d$.

\smallskip

To see the other inclusion, take $u\in \dom^*\ddi\cap
L^2(\oom;\ch)$. By our Definition \ref{def:ext-divergence} of $\dom^*\,\ddi$, $u$ belongs also to
$\lp L^2(\oom\times[0,T])\rp^d$. We will show that $u\in\dom\,\ddi$.
First, we will prove that the element $\ddi(u)$ defined by the
equality (\ref{eq-div-4}) satisfies, for any $\vp\in\lp\cald\rp^d$
and any $n\ge 0$,  that
\beq\label{eq-div-6}
\be\lc\ddi(u)\hnj\rc=\be\lc\lla u\,,\,D\hnj\rra_\ch\rc.
\eeq
Indeed, since $u\in
\lp L^2([0,T])\rp^d\cap \ch$ a.s. by assumption, we can apply again identity
(\ref{eq-div-3}) and so,
$$\lla u\,,\,D\hnj\rra_\ch=\hnju\iott\lla u_s\,,\,{\mathbf A}\vp(s)\rra\,ds.$$
Hence, using Fubini's theorem and (\ref{eq-div-4}) we end up with
\begin{eqnarray*}
\be\lla u\,,\,D\hnj\rra_\ch=\iott \lla\be\lc\hnju\,u_s\rc\,,\,
{\mathbf A}\vp(s)\rra\,ds=\be\lc\ddi(u)\hnj\rc,
\end{eqnarray*}
which is exactly (\ref{eq-div-6}).

\smallskip

By using density arguments (the
linear space generated by the elements of the form $\hnj$, with
$\vp\in\lp\cald\rp^d$, $n\ge 0$, is dense in $\dom \,{\rm D}$) we
obtain that
$$\be\lc \lla u\,,\,DF\rra_\ch\rc=\be\lc\ddi(u)\,F\rc
$$
for any $F\in \dom \,{\rm D}$, and this finishes the proof.

\end{proof}

\begin{example}\label{ex:fBm2}
Go back to our fBm Example \ref{ex:fbm}, and let us  compare  the extended divergence operator introduced
above with the one defined in \cite{CH}. First of
all, we must point out that in \cite{CH}, the (standard)
divergence operator is presented in  a more general setting than
ours: the divergence can belong to any $L^p(\oom)$, for $p>1$. In our paper, we will only consider this divergence over $L^2$ spaces for sake of conciseness.

\smallskip

According to the computations carried out in \cite{Hu} (see identity (5.30)
of that work), for any element $\psi$ in the space of test functions $\cald$ one has
$$c_H^2\mathbf D_{+}^\alpha
\mathbf D_{-}^\alpha\psi(s)=\iott
H\,|s-y|^{2H-1}\text{sign}(s-y)\psi'(y)\,dy.$$ On the other hand, we
have already seen at Example \ref{ex:fbm} that ${\mathbf
A}\psi(s)=\iott H\,|s-y|^{2H-1}\text{sign}(s-y)\psi'(y)\,dy$. That
is, on $\cald$, we have the following identity of operators:
$c_H^2\mathbf D_{+}^\alpha \mathbf D_{-}^\alpha\,=\,{\mathbf A}$.
Moreover, these  operators can be extended (and  coincide) by
density arguments to $I_{-}^{\al}(\mathcal E_H)$ (see \cite{CH} for
the definition of this space). Finally, in this case,
$\ch=I_{-}^{\al}\big(L^2([0,T]\big)$ is a subset of $L^2([0,T])$.
Using these observations,  it is readily checked that the extended
divergence operator defined above coincides with the extended
divergence given in \cite{CH}, restricted to $L^2$ spaces.
\end{example}

\subsection{Change of variable formula for Skorohod integrals}
We can now turn to the main aim of this section, namely the proof of a  change of variable formula for $f(x)$ based on our extended divergence operator $\ddi$.
  Interestingly enough, this will be achieved under some non restrictive exponential growth conditions on $f$.

\begin{definition}\label{growuthcond}
We will say that a function $f:\R^d\to \R$ satisfies the growth
condition \gc if there exist positive constants $C$ and $\lambda$
such that
\begin{equation}\label{eq-gc-1}
\lambda<\frac1{4\,d\,\max_{t\in[0,T]}R_t},
\quad\mbox{and}\quad
|f(x)|\le C\,e^{\lambda\,|x|^2}\,\ \text{for all }x\in\R^d.
\end{equation}
\end{definition}

\smallskip

Notice that $\max_{t\in[0,T]}R_t=\max_{t\in[0,T]}E[|x_t|^2]$. Thus the growth condition above
implies that
$$\be[\max_{t\in[0,T]}|f(x_t)|^r]\le
C^r\be\Big(e^{r\lambda\max_{t\in[0,T]}|x_t|^2}\Big),$$
and this last expectation is finite (see, for instance \cite{MR-bk}, Corollary 5.4.6)
if and only if
$$r\lambda<\frac1{2\max_{t\in[0,T]}E(|x_r|^2)}=\frac1{2\,d\,\max_{t\in[0,T]}R_t}.$$
So, if condition (\ref{eq-gc-1}) is satisfied,
there exists $r> 2$ such that
\begin{equation}\label{cotamoments}
\be \Big[\max_{t\in[0,T]}|f(x_t)|^r\Big]<\infty.
\end{equation}

\smallskip

With these preliminaries in hand, we first state a Skorohod type change of variable formula for a very regular   function $f$.
\begin{proposition}\label{ito1}
Let $f\in\mathcal C^{\infty}(\R^d)$ such that $f$ and all its
derivatives satisfy the growth condition \gc (with possibly
different $\lambda$'s and $C$'s). Then, for any $0\le s<t\le T$,
$$\1_{[s,t)}(\cdot)\,\nabla f(x_{\cdot})\in\dom^*\ddi$$
and
$$\ddi\lc \1_{[s,t)}(\cdot)\,\nabla
f(x_{\cdot})\rc=f(x_t)-f(x_s)-\frac12\int_s^t \Delta f(x_\rho)\,R'_\rho  d\rho.$$
\end{proposition}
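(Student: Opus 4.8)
The plan is to verify directly the defining property of the extended divergence given in Definition~\ref{def:ext-divergence}. Set $u=\1_{[s,t)}(\cdot)\,\nabla f(x_\cdot)$ and introduce the candidate
\[
X=f(x_t)-f(x_s)-\frac12\int_s^t \Delta f(x_\rho)\,R'_\rho\,d\rho .
\]
First I would check the two integrability requirements. The growth condition \gc applied to $\nabla f$ yields $\be[\max_{\rho\in\ott}|\nabla f(x_\rho)|^2]<\infty$, exactly as in~(\ref{cotamoments}), so that $u\in\lp L^2(\oom\times[0,T])\rp^d$; and \gc applied to $f$ and to $\Delta f$, together with $\int_0^T|R'_\rho|\,d\rho<\infty$ from Hypothesis~\ref{hyp:cov-x}, gives $X\in L^2(\oom)$. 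It then remains to prove that for every $\vp\in\lp\cald\rp^d$ and every $n\ge 0$ the identity~(\ref{eq-div-4}) holds with $\ddi(u)$ replaced by $X$; uniqueness of the divergence (Remark~\ref{rmk:density}) will then give $u\in\dom^*\ddi$ and $\ddi(u)=X$.

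The heart of the proof is a Gaussian computation. Fix $\vp$ and $n$, write $G:=x(\vp)$, and consider
\[
\Psi(\rho):=\be\lc f(x_\rho)\,\hnj\rc,\qquad \rho\in[s,t].
\]
The vector $(x_\rho(1),\ldots,x_\rho(d),G)$ is centered Gaussian, and its covariance matrix $\ssi(\rho)$ depends on $\rho$ only through the diagonal entries $\ssi_{jj}(\rho)=R_\rho$ and the entries $\ssi_{j,d+1}(\rho)=c_j(\rho):=\be[x_\rho(j)\,G]$, the remaining entries ($\be[x_\rho(i)x_\rho(j)]=0$ for $i\ne j$ and $\be[G^2]$) being constant in $\rho$. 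The key observation is that applying~(\ref{eq-div-3}) to $g=\1_{[0,\rho)}\crj$ gives
\[
c_j(\rho)=\lla \vp,\,\1_{[0,\rho)}\crj\rra_\ch=\int_0^\rho \mathbf{A}\vp(j)(v)\,dv,
\qquad\mbox{hence}\qquad c_j'(\rho)=\mathbf{A}\vp(j)(\rho),
\]
so that the abstract operator $\mathbf{A}$ is precisely the $\rho$-derivative of the covariance between $x_\rho$ and $G$.

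I would then differentiate $\Psi$ using the elementary heat equation for centered Gaussian densities, namely $\partial_{\ssi_{aa}}p_\ssi=\tfrac12\,\partial^2_{z_a}p_\ssi$ and $\partial_{\ssi_{ab}}p_\ssi=\partial_{z_a}\partial_{z_b}p_\ssi$ for $a<b$, followed by integration by parts to throw the derivatives onto $f(y)\hnj$. Since only the variances $R_\rho$ and the covariances $c_j(\rho)$ move with $\rho$, and recalling $D\hnj=\hnju\,\vp$ as in~(\ref{eq:ibp-extended}), this yields
\[
\Psi'(\rho)=\frac12\,R'_\rho\,\be\lc\Delta f(x_\rho)\,\hnj\rc
+\sum_{j=1}^d \mathbf{A}\vp(j)(\rho)\,\be\lc \partial_j f(x_\rho)\,\hnju\rc .
\]
Integrating this identity over $[s,t]$, applying the fundamental theorem of calculus to the left-hand side $\be[f(x_t)\hnj]-\be[f(x_s)\hnj]$ and Fubini's theorem to the $R'_\rho$ term, and rearranging, produces exactly
\[
\be\lc X\,\hnj\rc=\iott \lla \be\lc \hnju\,u_\rho\rc,\,\mathbf{A}\vp(\rho)\rra\,d\rho,
\]
which is~(\ref{eq-div-4}) with $\ddi(u)=X$.

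The main obstacle is the rigorous justification of the differentiation of $\Psi$ under the expectation and of the attendant Gaussian density manipulations: this is exactly where the growth condition \gc is used, its threshold $\lambda<1/(4d\max_t R_t)$ guaranteeing finiteness of all the Gaussian integrals involved (of $f$, $\nabla f$ and $\Delta f$ tested against $\hnj$) together with the domination needed for dominated convergence. Two secondary technical points must be handled: the possible degeneracy of $\ssi(\rho)$, which one can circumvent by perturbing $G$ with an independent Gaussian of variance $\ep$ and letting $\ep\to 0$ (or by invoking Stein-type integration-by-parts formulas that do not require $\ssi$ to be invertible); and the mere absolute continuity of $\rho\mapsto\Psi(\rho)$, which is all that the fundamental theorem of calculus needs and which follows from the a.e.\ differentiability of $R_\rho$ with $R'\in L^1$ and from $\mathbf{A}\vp\in L^2\subset L^1$.
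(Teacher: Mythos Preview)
Your argument is correct and reaches the same identity~(\ref{eq-div-7}) as the paper, but by a genuinely different route. The paper first \emph{eliminates} the Hermite factor: it iterates the duality $\ddi[\hnju\,\vp]=n\,\hnj$ to derive the closed formula
\[
\be\lc\hnj\,g(x_\rho)\rc=\frac{1}{n!}\sum_{j_1,\ldots,j_n}\be\lc\partial^{n}_{j_1\ldots j_n}g(x_\rho)\rc\,G_\vp^{j_1}(\rho)\cdots G_\vp^{j_n}(\rho),
\]
thereby reducing everything to expectations involving \emph{only} the $d$-dimensional vector $x_\rho$; it then differentiates in $\rho$ using the one-parameter heat equation $\partial_\si p=\tfrac12\Delta p$ for the density of $x_\rho$. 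You instead keep $G=x(\vp)$ in the picture and differentiate $\Psi(\rho)=\be[f(x_\rho)\hnj]$ directly via the $(d{+}1)$-dimensional heat identities $\partial_{\ssi_{aa}}p_\ssi=\tfrac12\partial^2_{z_a}p_\ssi$ and $\partial_{\ssi_{ab}}p_\ssi=\partial_{z_a}\partial_{z_b}p_\ssi$, so that the term $\mathbf A\vp(j)(\rho)=c_j'(\rho)$ arises in a single stroke as the $\rho$-derivative of the mixed covariance $\be[x_\rho(j)G]$. Your route is shorter and more conceptual (no induction on $n$, no intermediate product formula), while the paper's route has the advantage of never needing the full $(d{+}1)$-dimensional covariance $\ssi(\rho)$ to be nondegenerate: it only uses the $d$-dimensional density of $x_\rho$, which exists as soon as $R_\rho>0$. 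You correctly flag this degeneracy issue and your suggested fixes (an $\ep$-perturbation of $G$, or Stein-type integration by parts valid without invertibility) are adequate.
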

\begin{proof}
Since $f$ and all its derivatives satisfy  growth condition \gc, the process
$\1_{[s,t)}\nabla f(x)$ is an element of $\lp L^2(\oom\times[0,T])\rp^d$
and we also have
$$f(x_t)-f(x_s)-\frac12\int_s^t \Delta f(x_\rho )\,R'_\rho
d\rho  \in L^2(\oom).$$ So, we only need to show that for any $n\ge 0$
and any $\vp\in\lp\cald\rp^d$ the following equality is
satisfied:
\begin{multline}\label{eq-div-7}
\be\lc \lp f(x_t)-f(x_s)-\frac12\int_s^t \Delta
f(x_\rho)\,R'_\rho d\rho\rp\hnj\rc  \\
=\int_s^t \lla\be\lc\!
\hnju\,\nabla f(x_\rho)\rc\,, \,{\mathbf A}\vp(\rho)\rra\,d\rho.
\end{multline}
The
proof of this fact is  similar to that of \cite[Lemma 4.3]{CH},
although some technical complications arise from the  fact that here we
deal with the multidimensional case.

\smallskip

Consider thus the Gaussian kernel
\begin{equation}\label{gk}
p(\sigma,y)=\lp2\pi\si\rp^{-\frac{d}2}\exp\lp-\frac12\,\frac{|y|^2}{\si}\rp,\quad\text{for
}\si>0,\,y\in\R^d.
\end{equation}
It is a well-known fact that $\partial_{\si}p=\frac12\Delta p$. Moreover, $\be\lc g(x_t)\rc=\intd p(R_t,y)\,g(y)\,dy$ for any regular function $g:\R^d\to\R$ such that $g$ and all its
derivatives satisfy \gc.
Using these identities, we can perform the
following computations:
\begin{eqnarray}\label{eq-div-8}
\frac{d}{dt}\be\lc g(x_t)\rc&=&\frac{d}{dt}\intd
p(R_t,y)\,g(y)\,dy=\intd\frac{\partial}{\partial\si}
p(R_t,y)\,R'_t\,g(y)\,dy\nonumber\\
&=&\frac12\, R_t'\intd\Delta p(R_t,y)\,g(y)\,dy=\frac12\, R_t'\intd
p(R_t,y)\,\Delta G(y)dy\nonumber\\
&=&\frac12\,R_t'\,\be\lc \Delta g(x_t)\rc.
\end{eqnarray}
This shows that the  function $\frac{d}{dt}\be[g(x_t)]$ is defined
in all $t\in (0,T)$ and is integrable on $[0,T]$ . As a consequence,
  $\be[g(x_t)]$ is  absolutely continuous. Using
this fact and identity (\ref{eq-div-8}), we can now prove
(\ref{eq-div-7}) when $n=0$. Indeed, observe that in this case
$H_0(x)\equiv 1$ and, by definition, $H_{-1}(x)\equiv 0$. Hence, the
right-hand side of (\ref{eq-div-7}) is equal to $0$ while the
left-hand side gives:
\begin{equation*}
\be\lc  f(x_t)-f(x_s)-\frac12\int_s^t \Delta
f(x_\rho)\,R'_\rho d\rho\rc
=\int_s^t \frac{d}{d\rho}\be\lc f(x_\rho)\rc\,d\rho-\frac12 \int_s^t\be\lc \Delta
f(x_\rho)\rc\,R_\rho'\,d\rho,
\end{equation*}
and this last quantity vanishes due to (\ref{eq-div-8}).

\smallskip

Let now  $n\ge 1$. Define for $j\in\{1,\ldots,d\}$ and $t\in
[0,T]$,
\begin{equation}\label{eq:def-G-j}
G_{\vp}^j(\rho)=\int_0^\rho  {\mathbf A}\vp(j)(s)\,ds=\lla \1_{[0,\rho)}\crj,\,\vp(j)\crj\rra_\ch.
\end{equation}
Clearly, $G_{\vp}^j$ is absolutely continuous and $\lp
G_{\vp}^j\rp'={\mathbf A}\vp(j)$ (a.e). Moreover, for a  regular
function $g$ satisfying \gc together with all its  derivatives and
for any multiindex $(j_1,\ldots,j_n)\in\{1,\ldots, d\}^n$ we have
\begin{multline}\label{eq:ibp1}
\frac{d}{d\rho}\lp\be\lc g(x_\rho)\rc\gf{1}(\rho)\cdots\gf{n}(\rho)\rp
=\frac12\;\be\lc \Delta g(x_\rho)\rc R_\rho'\,\gf{1}(\rho)\cdots\gf{n}(\rho)\\
+\be\lc g(x_\rho)\rc\sum_{r=1}^n {\mathbf
A}\vp(j_r)(\rho)\lc\prod_{l:\,l\ne r} \gf{l}(\rho)\rc,
\end{multline}
where we have used (\ref{eq-div-8}). Recall now our convention (\ref{eq:convention-partial-deriv}), allowing to write $\partial_{j_1\ldots j_n}^{n}f$ for $\party{n}f$, and set $M_t^{j_1\ldots j_n}\equiv\be[\partial_{j_1\ldots j_n}^{n} f(x_\rho)] \prod_{l=1}^{n} \gf{l}(\rho)$ for $\rho\in[s,t]$.
By integrating (\ref{eq:ibp1})   from  $s$  to
$t$ and taking $g=\partial_{j_1\ldots j_n}^{n} f$ we
obtain that
\begin{multline*}
M_t^{j_1\ldots j_n}-M_s^{j_1\ldots j_n}
= \frac12 \int_s^t  \lp \be\lc\Delta\partial_{j_1\ldots j_n}^{n}f(x_\rho)\rc\,R_\rho'\,\prod_{l=1}^{n} \gf{l}(\rho) \rp \,d\rho  \\
+\,\int_s^t \lp \be\lc\partial_{j_1\ldots j_n}^{n} f(x_\rho)\rc \Big(\sum_{r=1}^n {\mathbf
A}\vp(j_r)(\rho)\prod_{l:\,l\ne r} \gf{l}(\rho)\Big) \rp\, d\rho.
\end{multline*}
Summing these expressions over all the multiindices
$(j_1,\ldots,j_n)\in\{1,\ldots,d\}^n$ and owing to the fact that
\begin{multline*}
\sum_{j_1,\ldots,j_n}\int_s^t \be\lc\partial_{j_1\ldots j_n}^{n} f(x_\rho)\rc \Big(\sum_{r=1}^n {\mathbf A}\vp(j_r)\prod_{l:\,l\ne r} \gf{l}(\rho)\Big)\, d\rho\\
=\, n\,\sum_{j_1,\ldots,j_n}\int_s^t  \be\lc\partial_{j_1\ldots j_n}^{n} f(x_\rho)\rc
\prod_{l=1}^{n-1} \gf{l}(\rho) \,{\mathbf A}\vp(j_n)(\rho)\,d\rho,
\end{multline*}
we end up with an expression of the form
\begin{multline}\label{eq-div-10}
\sum_{j_1,\ldots,j_n} \lc M_t^{j_1\ldots j_n}-M_s^{j_1\ldots j_n} \rc
= \frac12 \sum_{j_1,\ldots,j_n}\int_s^t  \be\lc\Delta\partial_{j_1\ldots j_n}^{n}f(x_\rho)\rc\,R_\rho'\,
\prod_{l=1}^{n} \gf{l}(\rho)\,d\rho\\
+\, n\,\sum_{j_1,\ldots,j_n}\int_s^t \be\lc\partial_{j_1\ldots j_n}^{n} f(x_\rho)\rc
\prod_{l=1}^{n-1} \gf{l}(\rho) \,{\mathbf A}\vp(j_n)(\rho)\,d\rho.
\end{multline}
It should be observed at this point that, as in identity (\ref{eq:def-rsums-strato}), the symmetries of the partial derivatives of $f$ play a crucial role in the proof of the current proposition. This symmetry property appears precisely in the computations above.

\smallskip

We will see now how to obtain the desired identity (\ref{eq-div-7}) from (\ref{eq-div-10}).
Indeed, it is a well known fact (see \cite{Nu-bk} again) that $\hnju\,\vp\in\dom\ddi$ and
$\ddi\lc\hnju\,\vp\rc=n\,\hnj$.
Using these last two facts, the duality relationship between $D$
and $\ddi$  and the definition (\ref{eq:def-G-j}) of $G_j^{\vp}$ we have that, for $g$ satisfying \gc
as well as its derivatives,
\begin{eqnarray*}
\be\lc \hnj g(x_t)\rc&=&\frac1{n}\be\lla\hnju\vp\,,\,Dg(x_t)\rra_{\ch}\\
&=&\frac1{n}\be\lla\hnju\vp\,,\,\1_{[0,t)}\nabla g(x_t)\rra_\ch\\
&=&\frac1{n}\be\lc\hnju\sum_{j=1}^d\int_0^t\partial_{j} g(x_t)\,{\mathbf A}\vp(j)(\rho )\,d\rho\rc\\
&=&\frac1{n}\sum_{j=1}^d\be\lc\hnju\partial_{j} g(x_t)\rc\,G_\vp^j(t).
\end{eqnarray*}
Iterating this procedure $n$ times, one ends up with the identity
\begin{equation*}
\be\lc \hnj g(x_t)\rc=
\frac1{n!}\sum_{j_1,\ldots j_n}\!\!\be\lc \partial_{j_1\ldots j_n}^{n}
g(x_t)\rc\,\gf{1}(t)\cdots\gf{n}(t).
\end{equation*}
As an application of this general calculation, we can deduce the following equalities:
\begin{eqnarray}
\be\lc \hnj f(x_t)\rc &=&\frac1{n!}\sum_{j_1,\ldots j_n}\!\!\be\lc
\partial_{j_1\ldots j_n}^{n} f(x_t)\rc\,\gf{1}(t)\cdots\gf{n}(t) \label{eq-div-11}\\
\be\lc\hnj f(x_s)\rc&=&\frac1{n!}\sum_{j_1,\ldots j_n}\!\!\be\lc \partial_{j_1\ldots j_n}^{n}
f(x_s)\rc\,\gf{1}(s)\cdots\gf{n}(s) \label{eq-div-12}\\
\be\lc\hnj \Delta
f(x_\rho)\rc&=&\frac1{n!}\sum_{j_1,\ldots j_n}\!\!\be\lc \partial_{j_1\ldots j_n}^{n} \Delta
f(x_\rho)\rc\,\gf{1}(\rho)\cdots\gf{n}(\rho),  \label{eq-div-13}
\end{eqnarray}
and
\begin{equation}\label{eq-div-14}
\be\lc\hnju \partial_{j} f(x_\rho)\rc
=\frac1{(n-1)!}\sum_{j_1,\ldots j_{n-1}}\!\!\be\lc \partial_{j_1\ldots j_{n-1} j}
f(x_\rho)\rc\,\gf{1}(\rho)\cdots G_\vp^{j_{n-1}}(\rho).
\end{equation}
Substituting now (\ref{eq-div-11})--(\ref{eq-div-14}) in
(\ref{eq-div-10}), we obtain
\begin{multline}
n!\be\lc \hnj f(x_t)\rc-n!\be\lc \hnj
f(x_s)\rc=\frac12\,n!\int_s^t\be\lc \hnj\Delta
f(x_\rho)\rc\,R'_\rho d\rho\nonumber\\
\phantom{xxxx}+n\,(n-1)!\int_s^t\sum_{j_n=1}^d\be\lc\hnju
\,\partial_{j_n} f(x_\rho)\rc \,{\mathbf A}\vp(j_n)(\rho)\,d\rho,
\end{multline} and
this is actually equality (\ref{eq-div-7}). The proof is now finished.

\end{proof}

Since the  Skorohod divergence operator is closable, we can now generalize our change of variable formula:
\begin{theorem}\label{ito2}
The conclusions of Proposition \ref{ito1} still hold true whenever $f$ is an element of $\mathcal C^{2}(\R^d)$ such that $f$ and its partial
derivatives up to second order verify the growth condition \gc.
\end{theorem}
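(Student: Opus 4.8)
The plan is to obtain the statement by a regularization procedure, relying on Proposition \ref{ito1} for smooth approximants together with the closedness of $\ddi$ recorded in Remark \ref{rmk:closure-ext-divergence}. First I would fix a standard mollifier $\eta_\ep$, that is a nonnegative $\cac^\infty$ function supported in the ball $\{|y|\le\ep\}$ with $\intd\eta_\ep(y)\,dy=1$, and set $f_\ep=f*\eta_\ep$. Each $f_\ep$ is then $\cac^\infty$, and for a multi-index $\al$ with $|\al|\le 2$ one has $\partial^\al f_\ep=(\partial^\al f)*\eta_\ep$, whereas higher derivatives are obtained by transferring the extra derivatives onto $\eta_\ep$. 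The first task is to check that every derivative of $f_\ep$ satisfies the growth condition \gc: since $\eta_\ep$ is supported in $\{|y|\le\ep\}$ and $f,\nabla f,\Delta f$ obey $|g(x)|\le Ce^{\la|x|^2}$, the elementary bound $(|x|+\ep)^2\le(1+\der)|x|^2+c_\der$ (valid for $\ep\le1$) shows that each $\partial^\al f_\ep$ satisfies \gc with an inflated rate $\la'=\la(1+\der)$ and an ($\ep$-dependent) constant. The point is that, choosing $\der$ small, one keeps $\la'<\frac1{4\,d\,\max_{t\in[0,T]}R_t}$ uniformly in $\ep\in(0,1]$, so that Proposition \ref{ito1} applies to each $f_\ep$ and yields
\[
\ddi\lc\1_{[s,t)}\nabla f_\ep(x)\rc
= f_\ep(x_t)-f_\ep(x_s)-\frac12\int_s^t \Delta f_\ep(x_\rho)\,R'_\rho\,d\rho.
\]

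Next I would pass to the limit $\ep\to0$, controlling every convergence in $L^2$. The key analytic input is that, because $\la'$ stays strictly below the critical rate, the moment estimate leading to (\ref{cotamoments}) holds uniformly in $\ep$: there is $r>2$ with $\sup_{\ep\le1}\be[\max_{\rho}|\partial^\al f_\ep(x_\rho)|^r]<\infty$ for $|\al|\le2$. Since $\partial^\al f$ is continuous, $\partial^\al f_\ep\to\partial^\al f$ pointwise, hence $\partial^\al f_\ep(x_\rho)\to\partial^\al f(x_\rho)$ almost surely for every $\rho$; boundedness in $L^r$ with $r>2$ gives uniform integrability of the squares, so Vitali's theorem upgrades this to convergence in $L^2(\oom)$, with a bound uniform in $\rho\in[0,T]$. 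From here: (i) $f_\ep(x_t)\to f(x_t)$ and $f_\ep(x_s)\to f(x_s)$ in $L^2(\oom)$; (ii) by Minkowski's integral inequality,
\[
\Big\|\int_s^t\big(\Delta f_\ep(x_\rho)-\Delta f(x_\rho)\big)R'_\rho\,d\rho\Big\|_{L^2(\oom)}
\le \int_s^t \big\|\Delta f_\ep(x_\rho)-\Delta f(x_\rho)\big\|_{L^2(\oom)}\,|R'_\rho|\,d\rho,
\]
and the right-hand side tends to $0$ by dominated convergence, the domination $\int_s^t|R'_\rho|\,d\rho<\infty$ being exactly part (1) of Hypothesis \ref{hyp:cov-x}; (iii) by the same dominated-convergence argument in the $\rho$ variable, $\1_{[s,t)}\nabla f_\ep(x)\to\1_{[s,t)}\nabla f(x)$ in $\lp L^2(\oom\times[0,T])\rp^d$.

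Finally, combining (i)--(iii), the right-hand side of the displayed identity for $f_\ep$ converges in $L^2(\oom)$ to $f(x_t)-f(x_s)-\frac12\int_s^t\Delta f(x_\rho)R'_\rho\,d\rho$, while $\1_{[s,t)}\nabla f_\ep(x)\to\1_{[s,t)}\nabla f(x)$ in $\lp L^2(\oom\times[0,T])\rp^d$. The closedness of the extended divergence stated in Remark \ref{rmk:closure-ext-divergence} then guarantees that $\1_{[s,t)}\nabla f(x)\in\dom^*\ddi$ and that $\ddi[\1_{[s,t)}\nabla f(x)]$ equals the announced limit, which is precisely the conclusion of Proposition \ref{ito1} for the present $f$. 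The main obstacle I anticipate is not the limiting scheme itself but the uniform growth control: one must verify that mollification does not push the exponential rate $\la$ across the threshold $\frac1{4d\max_{t}R_t}$, since it is exactly this strict inequality --- with room to spare --- that simultaneously makes Proposition \ref{ito1} applicable to every $f_\ep$ and provides the $L^r$ bounds ($r>2$) furnishing the uniform integrability behind all the $L^2$ convergences.
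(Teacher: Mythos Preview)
Your proposal is correct and follows essentially the same approach as the paper: regularize $f$ by convolution, apply Proposition \ref{ito1} to the smooth approximants after verifying a uniform \gc bound with an inflated but still subcritical exponent, and pass to the limit via the closedness of $\ddi$ (Remark \ref{rmk:closure-ext-divergence}). The only cosmetic difference is that the paper mollifies with the Gaussian kernel $p_k(y)=p(1/k,y)$ rather than a compactly supported mollifier, which makes the uniform growth estimate an explicit Gaussian computation instead of your elementary bound $(|x|+\ep)^2\le(1+\der)|x|^2+c_\der$.
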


\begin{proof}
Let $\la$ be the constant appearing in the growth condition \gc. Given $k>2\lambda$,
denote by $p_k(y)=p(\frac1{k},y)$ the Gaussian kernel defined
in (\ref{gk}) and introduce
$f_k(y)=(f*p_k)(y)$ where, as usual, $*$ denotes the convolution product.

\smallskip

We first claim that there exist $k_0\in\N$, $C'>0$ and $\lambda'$ satisfying
$\lambda<\lambda'<\frac1{4\,d\,\max_{t\in[0,T]}R_t}$,
such that
\begin{equation}\label{eq-div-20}
\sup_{k\ge k_0}|f_k(y)|\le C'\,e^{\lambda'|y|^2}.
\end{equation}
Indeed, condition \gc easily yields
\begin{multline}
|f_k(y)|\le \lp \frac{k}{2\pi}\rp^{d/2}\int_{\R^d}
|f(y-z)|\,e^{-\frac{k|z|^2}{2}}\,dz \nonumber \\
\le \lp \frac{k}{2\pi}\rp^{d/2}\,C\,\int_{\R^d}
e^{\lambda\,|y-z|^2}\,e^{-\frac{k|z|^2}{2}}\,dz =\,C\prod_{i=1}^d
\lp \sqrt{\frac{k}{2\pi}}\int_\R
e^{\lambda\,(y_i-z_i)^2}\,e^{-\frac{k
\,z_i^2}{2}}\,dz_i\rp.\nonumber
\end{multline}
On the other hand,
$$\sqrt{\frac{k}{2\pi}}\int_\R e^{\lambda\,(y_i-z_i)^2}\,e^{-\frac{k
\,z_i^2}{2}}\,dz_i\,=\,\sqrt{\frac{k}{k-2\lambda}}\,\exp\lcl\Big(\frac{\lambda\,k}{k-2\lambda}\Big)y_i^2\rcl,
$$
and $\lim_{k\to\infty}\frac{\lambda\,k}{k-2\lambda}\,=\, \lambda$. Hence, given $\lambda'\in(\lambda,\,\frac1{4\,d\,\max_{t\in[0,T]}R_t})$,
there exists $k_0\in\N$ such that for any $k\ge k_0$ the
following inequalities are satisfied:
$$\lambda<\frac{\lambda\,k}{k-2\lambda}<\lambda'<\frac1{4\,d\,\max_{t\in[0,T]}R_t}.$$
Our claim (\ref{eq-div-20}) is now easily deduced.

\smallskip

Notice that (\ref{eq-div-20}) means that for $k\ge k_0$,  $f_k$ also satisfies the growth condition \gc (with $C'$ and
$\lambda'$ substituting $C$ and $\lambda$, respectively).   Moreover, we have that
$$\be\lc \sup_{\rho\in [0,T]}\sup_{k\ge k_0}|f_k(x_\rho)|^2\rc<\infty.$$
Thanks to this inequality, as well as similar ones involving the derivatives of $f$, one can
easily see that:
\begin{enumerate}
\item $f_k(x_s)\to f(x_s)$ and $f_k(x_t)\to f(x_t)$ in $L^2(\oom)$,
\item $\int_s^t\Delta f_k(x_\rho)\,R'_\rho \,d\rho \to \int_s^t\Delta f(x_\rho)\,R'_\rho\, d\rho$ in $L^2(\oom)$ and
\item $\1_{[s,t)}\nabla
f_k(x_{\cdot}) \to \1_{[s,t)}\nabla f(x_{\cdot})$ in $\lp L^2(\oom\times
[0,T])\rp^d$.
\end{enumerate}
The result is finally obtained by applying Proposition \ref{ito1}
and the closeness of the extended operator $\ddi$ alluded to at Remark \ref{rmk:closure-ext-divergence}.

\end{proof}

\section{Representation of the Skorohod integral}
\label{sec:rep-sko}
Up to now, we have given two unrelated change of variable formulas for $f(x)$: one based on pathwise considerations (Theorem \ref{ito-strato-N}) and the other one by means of Malliavin calculus   (Theorem \ref{ito2}).  We propose now to make a link between the two formulas and integrals by means of Riemann sums.

\smallskip

Namely, let $x$ be a process generating a rough path of order $N$. We have seen at equation~(\ref{eq:def-rsums-strato}) that the Stratonovich integral  $\cj_{st}(\nabla f(x)dx)$ is given by
$\lim_{|\Pi_{st}|\to 0} S^{\Pi_{st}}$, where
$$
S^{\Pi_{st}} :=\sum_{q=0}^{n-1} \sum_{k=1}^{N} \frac{1}{k!}
\partial_{i_k\ldots i_1}^{k}f(x_{t_q}) \,
\bx^{\1}_{t_{q}, t_{q+1}}(i_1)\,\bx^{\1}_{t_{q}, t_{q+1}}(i_2)\cdots
\bx^{\1}_{t_{q}, t_{q+1}}(i_k).
$$
In a Gaussian setting, it is thus natural to think that a natural candidate for the Skorohod integral $\ddi(\nabla f(x))$ is also given by $\lim_{|\Pi_{st}|\to 0} S^{\Pi_{st},\di}$, with
\begin{equation}\label{eq:riemann-wick}
S^{\Pi_{st},\di}:= \sum_{q=0}^n \sum_{k=1}^{N} \frac{1}{k!}
\partial_{i_k\ldots i_1}^{k}f(x_{t_q}) \di \bx^{\1}_{t_{q},
t_{q+1}}(i_1) \di \cdots \di \bx^{\1}_{t_{q}, t_{q+1}}(i_k),
\end{equation}
where $\di$ denotes the Wick product. We shall see that this is indeed the case, with the following strategy:

\smallskip

\noindent \textbf{(i)} One should thus first check
that $\lim_{\Pi_{st}}S^{\Pi_{st},\di}$ exists. In order to check this convergence, we shall
use extensively Wick calculus, in order to write
\begin{equation*}
\partial_{i_k\ldots i_1}^{k}f(x_{t_q}) \di
\bx^{\1}_{t_{q}, t_{q+1}}(i_1) \di \cdots \di \bx^{\1}_{t_{q},
t_{q+1}}(i_k) =\partial_{i_k\ldots i_1}^{k}f(x_{t_q}) \,
\bx^{\1}_{t_{q}, t_{q+1}}(i_1)  \cdots \bx^{\1}_{t_{q},
t_{q+1}}(i_k) +\rho_{t_{q}, t_{q+1}},
\end{equation*}
where $\rho$ is a certain correction increment which can be computed
explicitly. Plugging this relation into (\ref{eq:riemann-wick}), we
obtain
\begin{equation}\label{eq:Spi-Spidi}
S^{\Pi_{st},\di}=S^{\Pi_{st}}+\sum_{q=0}^{n-1} \rho_{t_{q},
t_{q+1}}.
\end{equation}

\smallskip

\noindent \textbf{(ii)} Manipulating the exact expression of the remainder $\rho$, we will be able to prove that $\lim_{|\Pi_{st}|\to 0} \sum_{q=0}^{n-1} \rho^1_{t_{q}, t_{q+1}}=
-\frac12\int_s^t \Delta f(x_v) \, R'_v\,dv$. Hence, going back to (\ref{eq:Spi-Spidi}) and invoking the fact that
$S^{\Pi_{st}}$ converges to $\cj_{st}(\nabla f(x)\, dx)$, we obtain
\begin{equation*}
\lim_{|\Pi_{st}|\to 0}S^{\Pi_{st},\di}= \cj_{st}(\nabla f(x)\, dx)
-\frac12\int_s^t \Delta f(x_v) \, R_v'\,dv =[\der
f(x)]_{st}-\frac12\int_s^t \Delta f(x_v) \, R_v'\,dv.
\end{equation*}
This gives both the convergence of $S^{\Pi_{st},\di}$ and an
It\^{o}-Skorohod formula   of the form:
\begin{equation*}
[\der f(x)]_{st}=\lim_{|\Pi_{st}|\to 0}S^{\Pi_{st},\di} +
\frac12\int_s^t \Delta f(x_v) \, R'_v\,dv.
\end{equation*}

\smallskip

\noindent \textbf{(iii)} Putting together this last equality and
Theorem \ref{ito2}, it can be deduced that under
Hypotheses~\ref{hyp:cov-x} and ~\ref{l2-integr}, the limit of
$S^{\Pi_{st},\di}$ coincides with the Skorohod integral $\ddi(\nabla
f(x))$. This gives our link relating the Stratonovich integral
$\cj(\nabla f(x) \, dx)$ and the Skorohod integral  $\ddi(\nabla
f(x))$.

\smallskip

This relatively straightforward strategy being set, we turn now to the technical details of its realization. To this   end,  the main issue is obviously the computation of the corrections between Wick and ordinary products in sums like $S^{\Pi_{st},\di}$. We thus start by recalling some basic facts of Wick computations.

\subsection{Notions of Wick calculus}
We present here the notions of Wick calculus needed later on, basically following \cite{HY}. We also use extensively the notations introduced
in  Section \ref{Mall}.

\smallskip

One way to introduce Wick products on a Wiener space is to impose the relation
$$
I_n(f_n)\di I_m(g_m)=I_{n+m}(f_n\hatotimes g_m)
$$
for any $f_n\in\ch^{\hatotimes n}$ and $g_m\in\ch^{\hatotimes m}$, where the multiple integrals $I_n(f_n)$ and $I_m(g_m)$ are defined by (\ref{eq:def-mult-intg}).
If $F=\sum_{n=1}^{N_1}I_n(f_n)$ and
$G=\sum_{m=1}^{N_2}I_m(g_m)$, we define $F\di G$ by
$$F\di G=\sum_{n=1}^{N_1}\sum_{m=1}^{N_2}I_{n+m}(f_n\hatotimes g_m).$$
By a limit argument, we can then extend the Wick product to more general
random variables (see \cite{HY} for further details). In this paper, we will take the limits in the
$L^2(\Omega)$ topology.

\def\vare{{\mathcal{E}}}

\smallskip

For $f\in\ch$ we define its exponential vector $\ce(f)$ by
\begin{align*}
\vare(f)
&:=e^{\diamond I_1(f)}= \exp\left(I_1(f)-\frac{\|f\|_{\ch}^2}{2}\right)
=\exp\left(I_1(f)- \frac12 \be\left(I_1(f)
\right)^2\right)\\
&=\sum_{n=0}^{\infty}\frac1{n!} I_n(f^{\otimes
n})=\sum_{n=0}^{\infty}\frac1{n!} I_1(f)^{\di n}.
\end{align*}
In a similar way we can define the complex exponential vector of $f$
by ($\imath$ denotes here the imaginary unity)
\begin{equation}\label{eq:expansion-wick-exp}
e^{\diamond \, \imath\,I_1(f)}=\exp\left(\imath\,I_1(f)+\frac{\|f\|_{\ch}^2}{2}\right)=\sum_{n=0}^{\infty}\frac{\imath ^n}{n!} I_1(f)^{\di
n}.
\end{equation}
With these notations in hand, an important property of Wick product  is the following relation:
for any two  elements $f$ and $g$   in $\mathcal H$, we have
\begin{equation}
\vare(f)\diamond \vare(g)=\vare(f+g)\,,  \label{e.4.5}
\end{equation}
an analogous property for the complex exponential vector being also satisfied.

\smallskip

We now state a result which is a generalization of \cite[Proposition 4.8]{HY}.
\begin{proposition}\label{skoro}
Let $F\in\dom\, D^k$ and $g\in\mathcal H^{\otimes k}$. Then
\begin{enumerate}[(1)]
\item $F\diamond I_k(g)$ is well defined in $L^2(\Omega)$.
\item $Fg\in\dom\,\delta^{\di k}.$
\item $F\diamond I_k(g)=\delta^{\di k}(Fg)$.
\end{enumerate}
\end{proposition}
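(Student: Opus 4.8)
The plan is to exploit the chaos expansion of $F$ together with the defining property $I_n(f_n)\di I_m(g_m)=I_{n+m}(f_n\hatotimes g_m)$ of the Wick product, reduce everything to the single--chaos case, and then invoke the closedness of $\delta^{\di k}$ to pass from finite chaos expansions to a general $F$. Write $F=\sum_{n=0}^\infty I_n(f_n)$ with $f_n\in\ch^{\hatotimes n}$. For item (1), bilinearity of the Wick product gives $F\di I_k(g)=\sum_{n=0}^\infty I_{n+k}(f_n\hatotimes g)$, and since integrals of distinct orders are orthogonal, the isometry property yields $\be|F\di I_k(g)|^2=\sum_{n=0}^\infty (n+k)!\,\|f_n\hatotimes g\|_{\ch^{\hatotimes(n+k)}}^2$. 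Using that symmetrization is a contraction, so $\|f_n\hatotimes g\|\le\|f_n\|_{\ch^{\hatotimes n}}\|g\|_{\ch^{\otimes k}}$, together with the elementary bound $(n+k)!\le(1+k)^k\,n^k\,n!$ for $n\ge1$, I would obtain
$$
\be|F\di I_k(g)|^2\le \|g\|_{\ch^{\otimes k}}^2\Big(k!\,\|f_0\|^2+(1+k)^k\sum_{n\ge1} n^k\,n!\,\|f_n\|_{\ch^{\hatotimes n}}^2\Big)<\infty,
$$
the finiteness being exactly the characterization of $\dom D^k$ recalled in Section \ref{Mall}. This proves (1) and identifies $F\di I_k(g)$ as the candidate for $\delta^{\di k}(Fg)$.

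The heart of the matter is the \emph{single--chaos case} $F=I_n(f_n)$, where I would verify the duality relation (\ref{duality}) by hand. First, $\be\|I_n(f_n)\,g\|_{\ch^{\otimes k}}^2=n!\,\|f_n\|^2\,\|g\|_{\ch^{\otimes k}}^2<\infty$, so $I_n(f_n)g\in L^2(\oom;\ch^{\otimes k})$. Testing against $G=I_m(h_m)$ and using that the iterated derivative is a symmetric element of $\ch^{\otimes k}$ with $D^kI_m(h_m)=\tfrac{m!}{(m-k)!}(I_{m-k}\otimes\id)(h_m)$ (the symmetry of $D^kG$ allowing $g$ to be replaced by its symmetrization at no cost), I would compute
$$
\be\big[\lla D^kG,\,I_n(f_n)\,g\rra_{\ch^{\otimes k}}\big]=\frac{m!}{(m-k)!}\,\be\big[I_n(f_n)\,I_{m-k}(\lla h_m,g\rra_k)\big],
$$
where $\lla h_m,g\rra_k\in\ch^{\otimes(m-k)}$ is the partial contraction over the last $k$ arguments. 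By orthogonality this vanishes unless $m=n+k$, in which case it equals $(n+k)!\,\lla f_n\otimes g,h_m\rra=(n+k)!\,\lla f_n\hatotimes g,h_m\rra$ by symmetry of $h_m$, which is precisely $\be\big[G\cdot I_{n+k}(f_n\hatotimes g)\big]$. As this expectation depends on $G$ only through its projection onto the $(n+k)$-th chaos, the identity extends to every $G\in\dom D^k$, so $I_n(f_n)g\in\dom\,\delta^{\di k}$ with $\delta^{\di k}(I_n(f_n)g)=I_{n+k}(f_n\hatotimes g)=I_n(f_n)\di I_k(g)$. Linearity of the divergence then gives, for each truncation $F_M=\sum_{n=0}^M I_n(f_n)$, that $F_Mg\in\dom\,\delta^{\di k}$ and $\delta^{\di k}(F_Mg)=\sum_{n=0}^M I_{n+k}(f_n\hatotimes g)=F_M\di I_k(g)$.

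Finally I would let $M\to\infty$. Since $F_M\to F$ in $L^2(\oom)$ and $g$ is deterministic, $F_Mg\to Fg$ in $L^2(\oom;\ch^{\otimes k})$; and by the estimate of step (1), the divergences $\delta^{\di k}(F_Mg)=\sum_{n=0}^M I_{n+k}(f_n\hatotimes g)$ converge in $L^2(\oom)$ to $F\di I_k(g)$. The closedness of $\delta^{\di k}$ (recalled in Section \ref{Mall}) then yields $Fg\in\dom\,\delta^{\di k}$ and $\delta^{\di k}(Fg)=F\di I_k(g)$, which is exactly (2) and (3). I expect the main obstacle to be the single--chaos computation: one must handle the partial contraction $\lla h_m,g\rra_k$ and the interplay between full and symmetric tensor products carefully, and in particular justify, \emph{via} the symmetry of $D^kG$, that the possibly non-symmetric $g$ may be replaced by its symmetrization, so that the formula for $D^kI_m(h_m)$ and the identity $\lla f_n\otimes g,h_m\rra=\lla f_n\hatotimes g,h_m\rra$ are applied consistently.
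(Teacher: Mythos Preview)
Your proof is correct but follows a genuinely different route from the paper's. For item~(1) the two proofs coincide. For items~(2)--(3), the paper argues by \emph{induction on $k$}: it first restricts to $F$ with a finite chaos expansion and $g=g_1\otimes\cdots\otimes g_k$ a pure tensor, uses the $k=1$ case $F\di I_1(g_1)=\delta^{\di}(Fg_1)$ as a black box from \cite{HY}, and then in the inductive step writes $F\di I_{K+1}(g)=(F\di I_1(g_1))\di I_K(g_2\otimes\cdots\otimes g_{K+1})$ and applies the induction hypothesis together with the identity $D_{g_1}(D^K_{g_2\otimes\cdots\otimes g_{K+1}}G)=D^{K+1}_gG$. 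Only afterwards does it pass to a general $F\in\dom D^k$ by closedness and to a general $g\in\ch^{\otimes k}$ by density of linear combinations of pure tensors.

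Your approach bypasses both the induction and the reduction to pure tensors: you verify the duality relation directly for a single chaos $F=I_n(f_n)$ and an arbitrary $g\in\ch^{\otimes k}$ by testing against $G=I_m(h_m)$ and computing the partial contraction explicitly. This is more self-contained (no appeal to \cite{HY}) and slightly more economical, at the price of having to manipulate the formula $D^kI_m(h_m)=\tfrac{m!}{(m-k)!}(I_{m-k}\otimes\id)(h_m)$ and the symmetrization identities by hand---precisely the point you flag as the main obstacle. The paper's inductive argument trades those tensor-algebra manipulations for a cleaner bookkeeping that reduces everything to repeated applications of the $k=1$ case.
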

\begin{proof}
Let $F\in \dom\,D^k$. This implies that $F$ admits the
chaos decomposition
$F=\sum_{n=0}^{\infty} I_n(f_n)$, with
\begin{equation}\label{domdk}
\sum_{n=0}^{\infty} n^k\,n!\,\|f_n\|^2_{\mathcal H^{\hatotimes
n}}<\infty.
\end{equation}
Define then $F_N=\sum_{n=0}^{N} I_n(f_n)$. Consider also $g\in\mathcal H^{\otimes k}$. In order to check (1), we shall see that the
limit in $L^2(\Omega)$ of $F_N\di I_k(g)$ exists, as $N\to\infty$. But
$F_N \di I_k(g)=\sum_{n=0}^{N}I_{n+k}(f_n\hatotimes g),$
and the limit in $L^2(\Omega)$ of this expression exists if and
only if
$$\sum_{n=0}^{\infty} (n+k)!\,\|f_n\hatotimes g\|^2_{\mathcal
H^{\hatotimes n+k}}<\infty.$$ This last condition is clearly
satisfied thanks to (\ref{domdk}).

\smallskip

Now we will prove our claims $(2)$ and
$(3)$ for $F$ with a finite chaos decomposition and
$g=g_1\otimes\cdots\otimes g_k$, with $g_i\in\mathcal H$, for
$i=1,\ldots,k$. More precisely, we will see that for any
$G\in\textbf{S}$ the following relationship holds:
\begin{equation}\label{dual}
\be\Big[(F\diamond I_k(g))\,G\Big]=\be\big[\langle Fg,D^k
G\rangle_{\mathcal H^{\otimes k}}\Big].
\end{equation}
This will be done by an induction argument. For $k=1$, this is a
consequence of \cite[Proposition 4.8]{HY} since $F\di
I_1(g)=\delta^{\di}(Fg)$. Suppose now that (\ref{dual}) is satisfied for
$k=K$. Therefore,
\begin{eqnarray*}
\be\Big[[F\diamond I_{K+1}(g)]\,G\Big]&=&\be\Big[\big(F\di
I_1(g_1)\di
I_k(g_2\otimes\cdots\otimes g_{_{K+1}})\big)\,G\Big]\\
&=&\be\Big[\langle \big( F\di I_1(g_1)\big)g_2\otimes\cdots\otimes
g_{_{K+1}}\,,\,D^KG\rangle_{\mathcal H^{\otimes K}}\Big],
\end{eqnarray*}
where in the last equality we have used that $F\di I_1(g_1)$ has a
finite chaos expansion and the induction hypothesis. The last
expression can be rewritten as
$$\be\Big[\big(F\di I_1(g_1)\big)\,D^K_{g_{_2}\otimes\cdots\otimes
g_{_{K+1}}}G\Big].$$ Since  $D^K_{g_2\otimes\cdots\otimes
g_{{K+1}}}G\in\textbf{S}$, we can apply the case $k=1$ to the above
expression and  we obtain
\begin{align*}
&\be\Big[[F\diamond I_{K+1}(g)]\,G\Big]=
\be\Big[\langle
Fg_1\,,\,D^K_{g_{_2}\otimes\cdots\otimes
g_{_{K+1}}}G\rangle_{\mathcal H}\Big]= \be\Big[F
\,D_{_{g_1}}^1\big(D^K_{g_{_2}\otimes\cdots\otimes g_{_{K+1}}}G\big)\Big]\\
=&\be\Big[F \,D^{K+1}_{g_{_1}\otimes g_{_2}\otimes\cdots\otimes
g_{_{K+1}}}G\Big]=\be\Big[\langle F\,g_1\otimes\cdots\otimes
g_{_{K+1}}\,,\,D^{K+1}G\rangle_{\mathcal H^{\otimes K+1}}\Big],
\end{align*}
which finishes our induction procedure.
Thus, (\ref{dual}) is satisfied for $F$ with a finite chaos
expansion and $g$ a tensor product of elements of $\mathcal H$.

\smallskip

To extend the result to a general $F\in{\rm Dom} (D^k)$ and
$g\in\mathcal H^{\otimes k}$, we first consider the case
$F\in{\rm Dom} (D^k)$ and $g=g_1\otimes\cdots\otimes g_k$. In this
situation, identity (\ref{dual}) is a consequence of the fact that
this relationship holds for $F_N=\sum_{n=0}^{N} I_n(f_n)$ defined
above and of the  part (1) of the proposition. Finally, for a
general $g\in\mathcal H^{\otimes k}$, using the fact  that both
sides of (\ref{dual}) are linear in $g$, we can generalize this
identity to $g$ belonging to the linear span of elements of the form
$g_1\otimes\cdots\otimes g_k$, which is a dense subspace of
$\mathcal H^{\otimes k}$. So if $g\in\mathcal H^{\otimes k}$ and
$\{g^{M}\}_{M\in\mathbb N}$ is a sequence of elements of this linear
span of tensor products  such that $g^M\to g$ in $\mathcal
H^{\otimes k}$, one can easily see (by using (\ref{domdk})) that
$$F\di I_k(g^M)\rightarrow F\di I_k(g),$$
as $M\to\infty$ in $L^2(\Omega)$. Since
\begin{equation*}
E\Big[(F\diamond I_k(g^M))\,G\Big]=E\big[\langle Fg^M,D^k
G\rangle_{\mathcal H^{\otimes k}}\Big],
\end{equation*}
the proof  is completed  by a limiting argument.
\end{proof}

\subsection{One-dimensional case}
In order to simplify a little our presentation, we first show the identification of $\ddi(\nabla f(x))$ with $\lim_{|\Pi_{st}|\to 0} S^{\Pi_{st},\di}$ when $d=1$, that is when $x$ is a one-dimensional Gaussian process satisfying Hypothesis \ref{hyp:cov-x}. The first step in this direction is a general formula for Wick products of the form $G(X)\diamond Y^{\diamond p}$, where
$X$ and $Y$ are elements of the first chaos (see Section \ref{Mall} for a definition). Notice that the proof of this proposition is deferred to the Appendix for sake of clarity.
\begin{proposition}\label{basic} Let $g, h\in\mathcal H$ and let  $G:\mathbb R\rightarrow \mathbb R$ be differentiable up to order $p$ such that   all its  derivatives
$G^{(j)}$ are elements of $L^r(\mu_g)$ for any $j=0,\ldots,  p$ and
for some $r>2$, with $\mu_g=\cn(0,\|g\|_{\ch}^{2})$.
Define $X=I_1(g)$ and $Y=I_1(h)$. Then the Wick product $G(X) \diamond Y^{\diamond p}$ can be expressed in terms of ordinary products as
\begin{multline}\label{eq:correc-wick-1}
G(X) \diamond Y^{\diamond p} = G(X)   Y^{p}\\
+\sum_{0<l+2m\le p}
\frac{(-1)^{m+l}   p!}{2^m m! (p-2m-l)!} G^{(l)} (X) \left[\EE(XY)\right]^l
\left[\EE(Y^2)\right]^m Y^{p-2m-l}\,.
\end{multline}
\end{proposition}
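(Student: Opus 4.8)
The plan is to reduce the identity to the case of complex exponentials $G(x)=e^{\imath\xi x}$ and to recover the general case by Fourier superposition. The reason this is natural is that, for exponentials, the Wick product linearises through the exponential vector identity (\ref{e.4.5}) and its complex counterpart (\ref{eq:expansion-wick-exp}): Wick multiplication by an exponential of a first chaos element amounts to a shift. Throughout I set $c=\EE(X^2)=\|g\|_\ch^2$, $a=\EE(XY)$ and $b=\EE(Y^2)$, all finite since $g,h\in\ch$.

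First I would treat $G(x)=e^{\imath\xi x}$. Using (\ref{eq:expansion-wick-exp}) with $f=\xi g$, one writes the ordinary random variable $e^{\imath\xi X}$ as $e^{-\xi^2 c/2}\,e^{\diamond\,\imath\xi X}$. To produce the factor $Y^{\diamond p}$, I would introduce the two-parameter generating element $e^{\diamond(\imath\xi X+\eta Y)}$, which by the (complex) exponential vector property equals $e^{\diamond\,\imath\xi X}\diamond e^{\diamond\,\eta Y}$, so that $Y^{\diamond p}$ is recovered as $\partial_\eta^p\big|_{\eta=0}$. Since an exponential vector is, as an ordinary variable, $e^{\diamond(\imath\xi X+\eta Y)}=\exp\big(\imath\xi X+\eta Y-\tfrac12\EE[(\imath\xi X+\eta Y)^2]\big)$, one obtains after cancelling the $\xi$-dependent Gaussian prefactor
\begin{equation*}
e^{\imath\xi X}\diamond Y^{\diamond p}=e^{\imath\xi X}\,\partial_\eta^p\big|_{\eta=0}\exp\!\Big(\eta\,(Y-\imath\xi a)-\tfrac{\eta^2 b}{2}\Big).
\end{equation*}
Expanding the Gaussian generating function on the right gives a finite Hermite-type sum in which each power $(\imath\xi)^l$ multiplies $e^{\imath\xi X}$, the remaining factors being powers of $a$, $b$ and $Y$.

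I would then recover the general $G$ by writing $G(x)=\frac1{2\pi}\int_\R\hat G(\xi)e^{\imath\xi x}\,d\xi$ and integrating the previous identity against $\hat G$. The only $\xi$-dependence sits in the factors $(\imath\xi)^l e^{\imath\xi X}$, and the elementary relation $\frac1{2\pi}\int_\R\hat G(\xi)(\imath\xi)^l e^{\imath\xi X}\,d\xi=G^{(l)}(X)$ turns these into $G^{(l)}(X)$. Collecting the powers of $a$, $b$ and $Y$ and reindexing by $l$ (the number of derivatives falling on $G$) and $m$ (the number of factors $b$) then yields the announced double sum, the term $l=m=0$ producing the leading ordinary product $G(X)Y^p$.

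The main obstacle is not the algebra but the analytic justification of the two interchanges above: replacing $G$ by its Fourier representation and commuting the ($L^2(\Omega)$-valued) integral in $\xi$ with the Wick product. I would handle this by first establishing the identity for a dense, well-behaved class of $G$ (for instance functions with integrable Fourier transform), for which the Bochner integral and the interchange are unproblematic, and then extending to every $G$ satisfying the stated integrability by an approximation argument. Here the hypothesis $G^{(j)}\in L^r(\mu_g)$ with $r>2$ is precisely what is needed: since $X\sim\mu_g$ and $Y$ has all Gaussian moments, H\"older's inequality keeps every product $G^{(l)}(X)\,Y^{p-2m-l}$ in $L^2(\Omega)$, so both sides are continuous for the relevant convergence and the formula passes to the limit; the same bound, together with Proposition \ref{skoro}, guarantees that the left-hand side $G(X)\diamond Y^{\diamond p}$ is well defined. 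An alternative route, avoiding Fourier analysis altogether, is an induction on $p$: the base case $G(X)\diamond Y=G(X)\,Y-G'(X)\,a$ follows from Proposition \ref{skoro} together with the identity $\ddi(Fh)=F\,I_1(h)-\langle DF,h\rangle_\ch$, and the step $Y^{\diamond(p+1)}=Y^{\diamond p}\diamond Y$ reduces, via the same relation applied to $F=G^{(l)}(X)\,Y^{q}$, to a recursion on the coefficients whose solution is the claimed constant.
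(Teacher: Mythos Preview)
Your approach is essentially the paper's: both start from $G(x)=e^{\imath\xi x}$, extract $Y^{\diamond p}$ as the $\eta^p$-coefficient (equivalently, the $\partial_\eta^p|_{\eta=0}$) of a two-parameter Wick exponential, and then pass to general $G$ by approximation using Proposition~\ref{skoro} and the $L^r(\mu_g)$ hypothesis. The only difference is that the paper bypasses your Fourier-integral step entirely: from the exponential case it gets trigonometric polynomials by linearity, approximates $G$ and its derivatives in $L^r(\mu_g)$ by trigonometric polynomials (Lemma~\ref{analyt}), and closes the argument via the closedness of $\delta^{\diamond p}$ after identifying $G(X)\diamond Y^{\diamond p}=\delta^{\diamond p}(G(X)h^{\otimes p})$ --- so no Bochner integral or interchange of $\int d\xi$ with $\diamond$ is ever needed.
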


\begin{example}\label{ex:wick-corrections}
In order to illustrate the kind of correction terms we obtain, let us write formula (\ref{eq:correc-wick-1}) for $p=1, 2, 3$:
\begin{eqnarray*}
G(X)\diamond Y
&=&G(X)Y -G'(X) \EE(XY)\\
G(X)\diamond Y^{\diamond 2}
&=& G(X)Y^2-G(X)\EE(Y^2)-2G'(X)\EE(XY) Y+ G''(X)\left[\EE(XY)\right]^2 \\
G(X)\diamond Y^{\diamond 3}
&=& G(X)Y^3 -3G(X) \EE(Y^2) Y+3G'(X)   \EE(XY)\EE(Y^2)\\
&&\quad - 3G'(X) \EE(XY)Y^2 +3G''(X)\left[\EE(XY)\right]^2 Y
-G'''(X)\left[\EE(XY)\right]^3 \,.
\end{eqnarray*}
\end{example}

We are now ready to state our representation of the Skorohod integral by Riemann-Wick sums:
\begin{theorem}\label{itosko1}
Let $x$ be a $1$-dimensional centered Gaussian process with
continuous covariance function fulfilling Hypotheses \ref{hyp:cov-x}
and \ref{l2-integr}, and assume that $x$ also satisfies
Hypotheses~\ref{hyp:rough-intro}. Let $f$ be a function in
$C^{2N}(\R)$  such that $f^{(k)}$ verifies the growth condition \gc
for $k=1,\ldots, 2N$. Then, the Skorohod integral $\ddi(\nabla
f(x))$ (whose existence is ensured by  Theorem \ref{ito2}) can be
represented as ${\rm a.s.}-\lim_{\Pi_{st}\to 0} S^{\Pi_{st},\di}$,
where $S^{\Pi_{st},\di}$ is defined by
$$
S^{\Pi_{st},\di}=\sum_{i=0}^{n-1} \sum_{k=1}^{N} \frac{1}{k!}
f^{(k)}(x_{t_i}) \di  {\big(\bx^{\1}_{t_{i} t_{i+1}}\big)}^{\di k}.
$$
Moreover, we have
\begin{equation}
\ddi( f'(x))= \int_s^t  f'(x_\rho) dx_\rho-\int_s^t  f'(x_\rho)  R'_\rho  d\rho
\end{equation}
\end{theorem}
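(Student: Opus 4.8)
The plan is to realise, in the one-dimensional situation $N=\lfloor 1/\ga\rfloor$ (so that $\nabla f=f'$ and $\Delta f=f''$), the three-step strategy sketched at the beginning of Section~\ref{sec:rep-sko}. For \emph{Step 1 (from Wick to ordinary products)}, on each subinterval of a partition $\Pi_{st}=\{s=t_0<\cdots<t_n=t\}$ I would set $X=x_{t_i}$ and $Y=\bx^{\1}_{t_it_{i+1}}=x_{t_{i+1}}-x_{t_i}=I_1(\1_{[t_i,t_{i+1})})$ (suppressing the dependence of $X,Y$ on $i$), both elements of the first chaos, and apply Proposition~\ref{basic} with $G=f^{(k)}$ and $p=k$; this is licit since $f^{(k)},\dots,f^{(2k)}$ satisfy \gc and hence lie in every $L^r(\mu_g)$. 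Multiplying by $1/k!$, summing over $1\le k\le N$ and over $i$, and recognising $\sum_{i,k}\frac1{k!}f^{(k)}(x_{t_i})\,Y^k=S^{\Pi_{st}}$, I obtain the decomposition $S^{\Pi_{st},\di}=S^{\Pi_{st}}+\sum_{i=0}^{n-1}\rho_{t_it_{i+1}}$ of~\eqref{eq:Spi-Spidi}, with
\begin{equation*}
\rho_{t_it_{i+1}}=\sum_{k=1}^N\frac1{k!}\sum_{0<l+2m\le k}
\frac{(-1)^{m+l}k!}{2^m\,m!\,(k-2m-l)!}\,f^{(k+l)}(x_{t_i})\,
\big[\be(XY)\big]^l\big[\be(Y^2)\big]^m\,Y^{\,k-2m-l}.
\end{equation*}

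For \emph{Step 2 (limit of the correction)} the main lever is the deterministic identity $\be(XY)=\tfrac12(R_{t_{i+1}}-R_{t_i})-\tfrac12\be(Y^2)$, obtained from $R_{t_{i+1}}-R_{t_i}=\be[(x_{t_{i+1}}-x_{t_i})(x_{t_{i+1}}+x_{t_i})]$, together with $\be(Y^2)=\big\lVert\1_{[t_i,t_{i+1})}\big\rVert_{\ch}^2$. Substituting this in $\rho$ and expanding $[\be(XY)]^l$, I would isolate the single term $T_i:=-\tfrac12 f''(x_{t_i})(R_{t_{i+1}}-R_{t_i})$ (arising from $k=1$, $(l,m)=(1,0)$ and the factor $\tfrac12(R_{t_{i+1}}-R_{t_i})$). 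Since $R\in C^1$ with $R'$ integrable and $v\mapsto f''(x_v)$ is continuous, $\sum_i T_i\to-\tfrac12\int_s^t f''(x_v)R'_v\,dv$ almost surely. The remaining terms I would control by two mechanisms. The contributions in which every $\be(XY)$ has been replaced by $-\tfrac12\be(Y^2)$ reduce to powers of $\be(Y^2)$ and $Y$ and carry mesh-exponents $2\ga,3\ga,\dots$, which need not exceed $1$ when $\ga\le1/2$; hence they cannot be discarded individually, but the combinatorial Wick/Hermite coefficients force them to cancel order by order — the first instance being the exact cancellation of $+\tfrac12 f''(x_{t_i})\be(Y^2)$ (from $k=1$) against $-\tfrac12 f''(x_{t_i})\be(Y^2)$ (from $k=2$). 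For every genuinely remaining term I would use $\be(Y^2)\le c\,\Delta_i^{2\ga}$ (from the $\ga$-H\"older regularity of Hypothesis~\ref{hyp:rough-intro} and Fernique's theorem), the pathwise bound $|Y|\le\|x\|_{\ga}\Delta_i^{\ga}$, the integrability $\int_s^t|R'_v|\,dv<\infty$ and the uniform continuity of $R$, to check that its total mesh-exponent is strictly larger than $1$, so that $\sum_i(\cdot)\to0$ almost surely. Altogether $\sum_i\rho_{t_it_{i+1}}\to-\tfrac12\int_s^t f''(x_v)R'_v\,dv$.

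For \emph{Step 3 (identification)}, Theorem~\ref{ito-strato-N} (see~\eqref{eq:def-rsums-strato}) ensures $S^{\Pi_{st}}\to\cj_{st}(f'(x)\,dx)=f(x_t)-f(x_s)$ almost surely, whence $S^{\Pi_{st},\di}$ converges and
\[
\lim_{|\Pi_{st}|\to0}S^{\Pi_{st},\di}=f(x_t)-f(x_s)-\tfrac12\int_s^t f''(x_v)R'_v\,dv .
\]
Theorem~\ref{ito2} (applicable because $f'$ and $f''$ satisfy \gc) identifies the right-hand side with $\ddi(\1_{[s,t)}f'(x))$. This both establishes the representation of $\ddi(f'(x))$ as the almost sure limit of the Riemann--Wick sums $S^{\Pi_{st},\di}$ and, by combining the last two facts, yields the announced It\^o--Skorohod formula.

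The hard part is Step~2. Because the individual correction terms diverge for $\ga<1/2$ (for instance $\sum_i\be(Y^2)$ blows up), no term-by-term estimate can succeed: one must first extract, from the Hermite structure of the Wick powers, the exact algebraic cancellation of \emph{every} contribution of mesh-exponent $\le1$, and only afterwards apply the scaling estimates to the higher-order remainder. Organising this cancellation uniformly in $1\le k\le N$, in tandem with the splitting $\be(XY)=\tfrac12(R_{t_{i+1}}-R_{t_i})-\tfrac12\be(Y^2)$ that isolates the surviving first-order part $T_i$ from the vanishing rough part, is the delicate bookkeeping at the core of the argument.
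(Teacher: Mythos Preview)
Your overall strategy matches the paper's, and Steps~1 and~3 are fine. The difference is entirely in how Step~2 is executed, and here the paper is considerably cleaner than what you sketch.

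You substitute $\be(XY)=\tfrac12\,\der R_{t_it_{i+1}}-\tfrac12\,\be(Y^2)$, expand, and then have to argue that all ``pure-$\be(Y^2)$'' contributions cancel exactly while all mixed terms vanish. You verify only the first cancellation (the $f''$ terms) and assert the rest. The paper avoids this bookkeeping completely: after applying Proposition~\ref{basic} it performs the change of indices $q=k+l$, $u=l+m$, notices that for fixed $q$ and $u$ the inner sum is
\[
\sum_{l+m=u}\frac{1}{m!\,l!}\,[\be(XY)]^{l}\Big[\frac{\be(Y^2)}{2}\Big]^{m}
=\frac{1}{u!}\Big[\be(XY)+\frac{\be(Y^2)}{2}\Big]^{u}
=\frac{1}{u!}\Big[\frac{\der R_{t_it_{i+1}}}{2}\Big]^{u},
\]
and thereby eliminates $\be(Y^2)$ from the picture in one stroke. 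This binomial collapse \emph{is} the ``cancellation order by order'' you are looking for; once it is done, every surviving term already carries at least one factor $\der R_{t_it_{i+1}}$, and the three cases $u=0$, $(u,q)=(1,2)$, and ``$u\ge 2$ or $u=1,q-2u\ge1$'' are dispatched exactly as you intend. In particular no separate estimate $\be(Y^2)\le c\,\Delta_i^{2\ga}$ is needed.

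A minor point: for the mixed/remaining terms your phrasing ``total mesh-exponent strictly larger than $1$'' is not the right mechanism. What actually works (and what the paper does) is to spend one factor $\der R_{t_it_{i+1}}=\int_{t_i}^{t_{i+1}}R'_v\,dv$ to keep $\sum_i|\cdot|$ bounded by $C\int_s^t|R'_v|\,dv$, and to use the remaining factors $|\der R|^{u-1}$ or $|Y|^{q-2u}$ (at least one of which is present) to get a uniform $\max_i(\cdot)\to 0$. No H\"older-exponent counting is required beyond $|Y|\to 0$ and $|\der R|\to 0$ uniformly.
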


\begin{proof}
As mentioned at the beginning of the section, our main task is to compute $S^{\Pi_{st},\di}$ in terms of ordinary products. This will be achieved by applying Proposition \ref{basic} to each term in the above sum, with $G=f$, $X=x_{t_i}$ and
$Y=\bx^{\1}_{t_{i} t_{i+1}}=x_{t_{i+1}}-x_{t_i}$.

\smallskip

To this end, notice first that the integrability conditions on $f$ required at Proposition~\ref{basic} are fulfilled as soon as condition \gc (see Definition \ref{growuthcond}) is met. Fix then $i\in\{0,\ldots,n-1\}$, recall that we set  $X=x_{t_i}$ and
$Y=\bx^{\1}_{t_{i} t_{i+1}}$, and consider the quantity $\cs_k^i:=\frac{1}{k!} f^{(k)}
(x_{t_i})\diamond(\bx^{\1}_{t_i t_{i+1}})^{\diamond,k}$. A direct application of Proposition \ref{basic} yields
\begin{equation*}
\sum_{k=1}^N\cs_k^i=
\sum_{k=1}^N\sum_{l+2m\le k}  \frac{(-1)^{l+m}}{2^m
m!l!(k-2m-l)!} f^{(k+l)}(X) \left[\EE(XY)\right]^l
\left[\EE(Y^2)\right]^m Y^{k-2m-l}\,.
\end{equation*}
Making a substitution $q=k+l$ and $l+m=u$,  this expression can be simplified into
\begin{eqnarray}\label{eq:simpl-S-k-i}
\sum_{k=1}^N\cs_k^i&=&  \sum_{q=1}^{2N} f^{(q)}(X)\sum_{l+m\le q/2}
\frac{(-1)^{l+m}}{2^m m!l!(q-2l-2m)!}
\left[\EE(XY)\right]^l \left[\EE(Y^2)\right]^m Y^{q-2l-2m} \notag\\
&=&  \sum_{q=1}^{2N} f^{(q)}(X) \sum_{u=0}^{\left[q/2\right]}
\frac{(-1)^{u}}{ (q-2u)!} \sum_{l+m=u} \frac{1}{    m!l! }
\left[\EE(XY)\right]^l \left[\frac{\EE(Y^2)}{2} \right]^m Y^{q-2u} \notag\\
&=&  \sum_{q=1}^{2N}  f^{(q)}(X) \sum_{u=0}^{\left[q/2\right]}
\frac{(-1)^{u}}{ (q-2u)!\,u!}
\left[\EE(XY)+\frac{\EE(Y^2)}{2} \right]^u Y^{q-2u}.
\end{eqnarray}
Moreover, recalling again that
$X=x_{t_i}$ and $Y=x_{t_{i+1}}-x_{t_i}$, it is easily seen that
\begin{equation}
\EE(XY)+\frac{\EE(Y^2)}{2} =\frac12\left[
\EE(x_{t_{i+1}}^2)-\EE(x_{t_{i}}^2)\right]=\frac12\, \der R_{t_{i}t_{i+1}}\, ,
\end{equation}
where we recall that $\der R_{t_{i}t_{i+1}}$ stands for $R_{t_{i+1}}-R_{t_{i}}$. Therefore, summing now over $i\in\{i,\ldots,n-1\}$ we get
\begin{equation}\label{descom}
S^{\Pi_{st},\di}
=\sum_{i=0}^{n-1} \sum_{k=1}^N\cs_k^i=
\sum_{q=1}^{2N} \sum_{u=0}^{\left[q/2\right]} \frac{(-1)^{u}}{
(q-2u)!\,u!\,2^u} \sum_{i=0}^{n-1}  \ct_{i}^{q,u},
\end{equation}
where the quantity $\ct_{i}^{q,u}$ is defined by
\begin{equation}\label{eq:def-T-i}
\ct_{i}^{q,u}=
f^{(q)}(x_{t_i})
\left(\der R_{t_{i}t_{i+1}} \right)^u \big(\bx^{\1}_{t_i,
t_{i+1}}\big)^{q-2u}.
\end{equation}
We now separate the study into different cases.

\smallskip

\noindent {\it Case 1:}  \   If  $u=1$ and $q-2u=0$ (namely
$q=2$), then
$$
\sum_{i=0}^{n-1}  \ct_{i}^{q,u}= - \frac12\sum_{i=0}^{n-1}  f''(x_{t_i})  \left[R_{t_{i+1}}-R_{t_i}
   \right]=-\frac12\int_s^t\Big(\sum_{i=0}^{n-1}f''(x_{t_{i}})\,\1_{[t_i,t_{i+1})}(\rho)\Big)R'_\rho d\rho,
$$
where in the last equality we resort to the fact that $R_\rho$ is absolutely
continuous (see Hypothesis \ref{hyp:cov-x}). From this expression, by a dominated convergence
argument one easily gets $\lim_{n\to\infty}\sum_{i=0}^{n-1}  \ct_{i}^{q,u}
= -\frac12\int_s^t  f''(x_\rho) R_\rho' \, d\rho$.

\smallskip

\noindent{\it Case 2:} If $u\ge 2$ or $u=1$, $q-2u\ge 1$, then $\lim_{n\to\infty}\sum_{i=0}^{n-1}  \ct_{i}^{q,u}=0$. Indeed, recalling definition (\ref{eq:def-T-i}) of $\ct_{i}^{q,u}$, we observe that
\begin{equation*}
\sum_{i=0}^{n-1}  \ct_{i}^{q,u} \le
\max_{0\le i\le n-1} \lcl |\bx^{\1}_{t_it_{i+1}}|^{q-2u} , \, |\der R_{t_{i}t_{i+1}}|^{u-1} \rcl \,
\int_s^t \Big(\sum_{i=0}^{n-1}|f^{(q)}(x_{t_{i}})|\,\1_{[t_i,t_{i+1})}(\rho)\Big)|R'(\rho)|d\rho.
\end{equation*}
In the right hand side of the above inequality, it is now easily seen that
$$
\lim_{n\to\infty}\max_{0\le i\le n-1} \lcl |\bx^{\1}_{t_it_{i+1}}|^{q-2u} , \, |\der R_{t_{i}t_{i+1}}|^{u-1} \rcl=0,
$$
while the integral term remains bounded by $ C\int_s^t |R'_\rho
|d\rho $, which is bounded by assumption. This completes the proof
of our claim.

\smallskip

\noindent {\it Case 3:}    If  $u=0$,  then
\begin{equation*}
\sum_{i=0}^{n-1}  \ct_{i}^{q,u}
=\sum_{i=0}^{n-1} \sum_{q=1}^{N}  \frac{1}{ q!}  f^{(q)}(x_{t_i})
\big(\bx^{\1}_{t_i t_{i+1}}\big)^{q}+
\sum_{i=0}^{n-1} \sum_{q=N+1}^{2N}  \frac{1}{ q!}  f^{q}(x_{t_i})   \big(\bx^{\1}_{t_i t_{i+1}}\big)^{q}\,.
\end{equation*}
Thus Theorem \ref{ito-strato-N} asserts that the first sum above converges to $\int_s^t f'(x_u) dx_u$, while it is easy to see that the second sum converges to $0$, thanks to the regularity properties of $x$.

\smallskip

Plugging now the study of our 3 cases into equation (\ref{descom}), the proof of our theorem is easily completed.

\end{proof}

\subsection{Relationship with existing results}\label{sec:comparison}
Several results exist on the convergence of Riemann-Wick sums, among which emerges \cite{NTa}, dealing with a situation which is similar to ours in the case of a one-dimensional process.

\smallskip

In order to compare our results with those of \cite{NTa}, let us specialize our situation to the case of a dyadic partition of an interval $[s,t]$ with $s<t$ (the case of a general partition is handled in \cite{NTa}, but this restriction will be more convenient for our purposes). Namely, for $n\ge 1$, we consider the partition $\Pi_{st}^{n}=\{t_k^n;0\le k\le 2^n\}$, where $t_{k}^{n}=s+k(t-s)/2^n$. For notational sake, we often write $t_{k}$ instead of $t_{k}^{n}$. We shall also restrict our study to the case of a fBm $B$, though \cite{NTa} deals with a rather general Gaussian process.

\smallskip

Let us first quote some results about weighted sums taken from
\cite{GRV,GN}:
\begin{proposition}\label{prop:weighted-sums}
Let $B$ be a one-dimensional fractional Brownian motion, whose covariance function is defined by (\ref{eq:cov-fBm}). Let $g$ be a $\cac^4$ function satisfying Hypothesis \gc together with all its derivatives.

\smallskip

\noindent
\textit{(i)} For $n\ge 1$, set
\begin{equation*}
V_n^{(2)}(g)=\sum_{k=0}^{2^n-1} g(B_{t_k}) \, \lc  \lp \bb^{\1}_{t_{k}t_{k+1}}\rp^2 - 2^{-2nH}\rc.
\end{equation*}
Then if $1/4<H<3/4$, we have
\begin{equation}\label{eq:lim-V-n-2}
\cl-\lim_{n\to\infty}n^{2H-1/2} V_n^{(2)}(g) = \si_H \int_0^{t-s} g(B_s) \, dW_s,
\end{equation}
where $\cl-\lim$ stands for a convergence in law, $\si_H$ is a positive constant depending only on $H$, and $W$ is a Brownian motion independent of $B$.

\smallskip

\noindent
\textit{(ii)} For $n\ge 1$, set
\begin{equation*}
\tilde V_n^{(3)}(g)=\sum_{k=0}^{2^n-1} g(B_{t_k}) \,  \lp \bb^{\1}_{t_{k}t_{k+1}}\rp^3.
\end{equation*}
Then if $H<1/2$ we have
$$
L^2(\oom)-\lim_{n\to\infty}n^{4H-1} \tilde V_n^{(3)}(g) =
-\frac{3}{2} \int_0^{t-s} g'(B_s) \, ds.
$$

\end{proposition}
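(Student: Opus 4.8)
Both assertions are established in \cite{GRV,GN}, so I would reproduce their arguments; let me only indicate the strategy, which rests on the chaos decomposition of the increments of $B$ combined with the Malliavin calculus tools of Section \ref{Mall}. Throughout, abbreviate the increment by $\eta_k:=\bb^{\1}_{t_k t_{k+1}}=B_{t_{k+1}}-B_{t_k}$, write $\si_n^2:=\be[\eta_k^2]=(t_{k+1}-t_k)^{2H}$ for its variance, and set $\hat\eta_k:=\eta_k/\si_n$. The family $\{\hat\eta_k\}$ is a stationary standard Gaussian sequence whose correlations $\rho(m)=\be[\hat\eta_k\hat\eta_{k+m}]$ decay like $|m|^{2H-2}$ as $|m|\to\infty$. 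The elementary algebraic input is the Hermite expansion of the monomials, $x^2=H_2(x)+1$ and $x^3=H_3(x)+3x$, which yields
\begin{equation*}
\eta_k^2-\si_n^2=\si_n^2\,H_2(\hat\eta_k)=I_2\big(\1_{[t_k,t_{k+1})}^{\hatotimes 2}\big),
\qquad
\eta_k^3=\si_n^3\,H_3(\hat\eta_k)+3\si_n^2\,\eta_k.
\end{equation*}

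For assertion (i), the plan is to recognize that, up to the deterministic centering, $V_n^{(2)}(g)$ equals $\si_n^2\sum_k g(B_{t_k})H_2(\hat\eta_k)$, hence is an object dominated by the second Wiener chaos, and to apply a stable central limit theorem of Breuer--Major type with weights (in the spirit of the weighted power variation results of the Malliavin--Stein school). The heart of the matter is then to prove the joint convergence of $\big(B,\,n^{2H-1/2}V_n^{(2)}(g)\big)$ towards $\big(B,\,\si_H\int_0^{t-s}g(B_u)\,dW_u\big)$ with $W$ a Brownian motion independent of $B$; the independence reflects the asymptotic decorrelation of the second-chaos fluctuations from the first chaos, so that the limit is conditionally Gaussian given $B$. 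The constant is identified as $\si_H^2=2\sum_{m\in\Z}\rho(m)^2$, a series which converges exactly when $2(2H-2)<-1$, that is $H<3/4$, while the lower constraint $H>1/4$ ensures that the genuine order of fluctuation is the one displayed. The technical backbone is the fourth-moment/contraction criterion forcing the Gaussian limit.

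For assertion (ii) I would work directly in $L^2(\oom)$, splitting $\eta_k^3$ as above. The first-chaos part contributes $3\si_n^2\sum_k g(B_{t_k})\eta_k$: although its fluctuations are negligible after the scaling $n^{4H-1}$, its mean is not, and a Gaussian (Stein) integration by parts gives $\be[g(B_{t_k})\eta_k]=\be[g'(B_{t_k})]\,\langle\1_{[0,t_k)},\1_{[t_k,t_{k+1})}\rangle_{\ch}$. The covariance $\langle\1_{[0,t_k)},\1_{[t_k,t_{k+1})}\rangle_{\ch}=\tfrac12(t_{k+1}^{2H}-t_k^{2H}-\si_n^2)$ behaves like $-\tfrac12\si_n^2$ precisely because $2H<1$, which is where the hypothesis $H<1/2$ enters and where the factor $-\tfrac32$ originates. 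The remaining work is to treat the third-chaos part $\si_n^3\sum_k g(B_{t_k})H_3(\hat\eta_k)$, which I would rewrite by means of Proposition \ref{skoro} as a triple Skorohod integral plus Malliavin contraction terms, and to show that, after normalization, the whole expression converges in $L^2$ to the \emph{random} variable $-\tfrac32\int_0^{t-s}g'(B_u)\,du$, the surviving piece being a Riemann sum $\sum_k g'(B_{t_k})(\cdots)$ taken against the increments of the (absolutely continuous) variance function.

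The main obstacle, common to both parts, is the control of the off-diagonal covariance sums of the increments of $B$, which are only conditionally (and, for $H<1/2$, negatively) correlated: one must show that the remainder chaoses are negligible at the announced rate and that only the dominant contribution survives. For (i) this is exactly the contraction estimate pinning down the range $1/4<H<3/4$, while for (ii) it is the proof that the triple Skorohod integral is asymptotically negligible and that the single Malliavin contraction produces the precise constant $-\tfrac32$, which is where the threshold $H<1/2$ is used. Since these are precisely the computations carried out in \cite{GRV,GN}, I would defer the detailed estimates to those references.
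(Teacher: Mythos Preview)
The paper does not prove this proposition at all: it is introduced with the sentence ``Let us first quote some results about weighted sums taken from \cite{GRV,GN}'' and is stated without proof, as a citation from the literature. Your proposal therefore goes well beyond what the paper does, since you sketch the chaos-decomposition strategy underlying those references; but since you ultimately also defer the detailed estimates to \cite{GRV,GN}, your approach is consistent with the paper's treatment.
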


We can now recall the main result of \cite{NTa}, to which we would like to compare our own computations, is the following:
\begin{proposition}\label{prop:cvge-wick-riemann-1}
Let $B$ be a one-dimensional fBm with Hurst parameter $1/4<H\le1/2$ and $f$ be a $\cac^4$ function satisfying Hypothesis \gc together with all its derivatives. For $0\le s< t\le T$, consider the set of dyadic partitions $\{\Pi_{st}^n; \, n\ge 1\}$ and set
\begin{equation}\label{eq:wick-riemann-sums-order-1}
\tilde S^{n,\di}=\sum_{k=0}^{2^n-1}
f^{\prime}(B_{t_k}) \di  \bb^{\1}_{t_{k} t_{k+1}}.
\end{equation}
Then $\tilde S^{n,\di}$ converges in $L^2(\oom)$ to $\ddi(f'(B))$
(which is the Skorohod integral introduced at Theorem \ref{ito2}).
\end{proposition}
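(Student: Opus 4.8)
The plan is to obtain the \emph{value} of the limit for free through the closedness of $\ddi$, and to reduce the whole statement to a single $L^2$ convergence that can be analysed by Wick calculus together with the weighted sum asymptotics of Proposition~\ref{prop:weighted-sums}. First I would rewrite $\tilde S^{n,\di}$ as an extended divergence. Applying Proposition~\ref{skoro} with $k=1$ to each summand gives $f'(B_{t_k})\di\bb^{\1}_{t_kt_{k+1}}=\delta^{\di}\big(f'(B_{t_k})\1_{[t_k,t_{k+1})}\big)$, hence $\tilde S^{n,\di}=\ddi(u^n)$ with the step process $u^n=\sum_{k=0}^{2^n-1}f'(B_{t_k})\1_{[t_k,t_{k+1})}$, where I use that on $\dom\,\ddi$ the extended operator coincides with the Skorohod integral. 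Since $f'$ and $f''$ satisfy \gc, one checks at once that $u^n\to\1_{[s,t)}\,f'(B)$ in $L^2(\oom\times[0,T])$, while $\1_{[s,t)}f'(B)\in\dom^*\ddi$ by Theorem~\ref{ito2}. By the closedness of $\ddi$ recalled in Remark~\ref{rmk:closure-ext-divergence}, it then suffices to prove that $\tilde S^{n,\di}$ converges in $L^2(\oom)$: the limit is automatically equal to $\ddi(f'(B))$.

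To establish this $L^2$ convergence I would compare $\tilde S^{n,\di}$ with the full Wick--Riemann sum $S^{\Pi^n_{st},\di}$ of Theorem~\ref{itosko1}, from which it differs only by the Wick terms of order $m\ge2$:
\begin{equation*}
\Delta_n:=S^{\Pi^n_{st},\di}-\tilde S^{n,\di}
=\sum_{k=0}^{2^n-1}\sum_{m=2}^{N}\frac1{m!}\,f^{(m)}(B_{t_k})\di\big(\bb^{\1}_{t_kt_{k+1}}\big)^{\di m},
\qquad N=\lfloor1/\ga\rfloor\in\{2,3\}.
\end{equation*}
Writing $Y_k=\bb^{\1}_{t_kt_{k+1}}$ and $\si_n^2=\be[Y_k^2]=\big((t-s)2^{-n}\big)^{2H}$, I would convert each Wick product into ordinary products through Proposition~\ref{basic} and Example~\ref{ex:wick-corrections}. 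Since $Y_k^{\di2}=Y_k^2-\si_n^2$ and $Y_k^{\di3}=Y_k^3-3\si_n^2Y_k$, the leading part of the $m=2$ contribution is $\tfrac12 V_n^{(2)}(f'')$, that of the $m=3$ contribution (present only when $N=3$) is $\tfrac1{6}\tilde V_n^{(3)}(f''')$, and every remaining summand carries at least one extra factor among $\be[B_{t_k}Y_k]=\tfrac12\der R_{t_kt_{k+1}}-\tfrac12\si_n^2$ (of size $O(2^{-n})$) and $\si_n^2=O(2^{-2nH})$; a direct increment count then forces these correction sums to $0$ in $L^2$ for $H>1/4$. Proposition~\ref{prop:weighted-sums}(ii) disposes of the cubic part, $\tilde V_n^{(3)}(f''')\to0$ in $L^2$ because $4H-1>0$, the residual $\si_n^2\sum_kf'''(B_{t_k})Y_k$ vanishing since $\si_n^2\to0$ and the plain sum is $L^2$-bounded.

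The main obstacle is the quadratic sum $V_n^{(2)}(f'')$: Proposition~\ref{prop:weighted-sums}(i) provides only convergence \emph{in law} of $n^{2H-1/2}V_n^{(2)}(f'')$, hence convergence to $0$ in probability but not in $L^2$. I would therefore estimate its second moment directly. Expanding the second-chaos variables $Y_k^{\di2}=I_2(\1_{[t_k,t_{k+1})}^{\otimes2})$ by the product formula and using the increment covariances $r_{kj}=\be[Y_kY_j]$ of fBm, one gets a dominant term of order $\sum_{k,j}\be[f''(B_{t_k})f''(B_{t_j})]\,r_{kj}^2$ plus lower order contraction terms; for the dyadic partition $\sum_{k,j}r_{kj}^2=O\big(2^{n(1-4H)}\big)$ exactly when $1/4<H<3/4$, so that $\be\big[V_n^{(2)}(f'')^2\big]\to0$. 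This is precisely where the threshold $H>1/4$ enters. Consequently $\Delta_n\to0$ in $L^2$; since the same moment bounds show the a.s.\ convergent family $S^{\Pi^n_{st},\di}$ to be uniformly square integrable, it converges in $L^2$ as well, and $\tilde S^{n,\di}=S^{\Pi^n_{st},\di}-\Delta_n$ converges in $L^2$, which by the first paragraph identifies the limit as $\ddi(f'(B))$. Finally, in the roughest subcase $1/4<H<1/3$ (where $N=3$ and Theorem~\ref{itosko1} is formally stated for $f\in\cac^{6}$), one observes that beyond $\tilde V_n^{(3)}(f''')$ every term produced by the Wick expansion of the $m=3$ contribution carries a vanishing increment factor, so that expanding only up to order four and bounding the Wick remainder directly makes the hypothesis $f\in\cac^{4}$ sufficient.
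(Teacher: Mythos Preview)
Your approach is correct but follows a different route from the paper. The paper does not go through Theorem~\ref{itosko1} or the closedness of $\ddi$; instead it expands the single first-order Wick product directly via Example~\ref{ex:wick-corrections},
\[
f'(B_u)\di\bb^{\1}_{uv}=f'(B_u)\,\bb^{\1}_{uv}-\tfrac12 f''(B_u)\,(v^{2H}-u^{2H})+\tfrac12 f''(B_u)\,|v-u|^{2H},
\]
and then artificially adds and subtracts the second and third Taylor terms so as to produce the \emph{Stratonovich} Riemann sum $S^{\Pi^n_{st}}$ (not the Wick one) plus exactly $V_n^{(2)}(f'')$ and $\tilde V_n^{(3)}(f^{(3)})$. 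The limit of $S^{\Pi^n_{st}}$ is $\cj_{st}(f'(B)\,dB)=f(B_t)-f(B_s)$ by Theorem~\ref{ito-strato-N}, and the identification with $\ddi(f'(B))$ then follows from the Skorohod formula of Theorem~\ref{ito2}. Your route through $S^{\Pi^n_{st},\di}$ and the closedness argument is more structural and makes the identification of the limit automatic, but it forces you to expand the higher-order Wick powers as well and to upgrade the a.s.\ convergence of $S^{\Pi^n_{st},\di}$ in Theorem~\ref{itosko1} to $L^2$; the paper's route needs only one Wick expansion and obtains the comparison terms in one line, at the price of the slightly artificial add/subtract step. Incidentally, your remark that Proposition~\ref{prop:weighted-sums}(i) gives only convergence in law, so that a direct second-moment bound on $V_n^{(2)}(f'')$ is needed, is more careful than the paper, which simply declares this convergence ``readily checked''.
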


\begin{proof}
Our aim here is not to reproduce the proof contained in \cite{NTa}, but to give a version compatible with our formalism. We shall focus on the case $1/4<H\le 1/3$, the other one being easier.

Consider first $0\le u< v\le T$. According to Example \ref{ex:wick-corrections}, we have
\begin{eqnarray*}
f'(B_s) \di \bb^{\1}_{uv}&=& f'(B_u) \, \bb^{\1}_{uv} - \frac12 f''(B_u) \, \be\lc B_u\, \bb^{\1}_{uv}\rc  \\
&=& f'(B_u) \, \bb^{\1}_{uv} - \frac12 f''(B_u) \, \lc  v^{2H}- u^{2H}\rc
+\frac12 f''(B_u) \, |t-s|^{2H}.
\end{eqnarray*}
In a rather artificial way, we shall recast this identity into
\begin{equation}\label{eq:correc-wick-2}
f'(B_s) \di \bb^{\1}_{uv}=
\sum_{j=1}^3 \frac{1}{j!} f^{(j)}(B_u) \, \lp \bb^{\1}_{uv}\rp^{j}- R^1_{uv}- R^2_{uv},
\end{equation}
with
\begin{equation*}
R^1_{uv}=\frac12 f''(B_u) \lc \lp \bb^{\1}_{uv}\rp^{2}- |v-u|^{2H}\rc,
\quad\mbox{and}\quad
R^2_{uv}=\frac16 f^{(3)}(B_u) \, \lp \bb^{\1}_{uv}\rp^{3}.
\end{equation*}
Plugging (\ref{eq:correc-wick-2}) into the definition of $\tilde S^{n,\di}$, we thus obtain
\begin{equation}\label{eq:tilde-Sn-Sn}
\tilde S^{n,\di}= S^{\Pi_{st}^n}- \frac12 \sum_{k=0}^{2^n-1} f''(B_{t_k}) \, \lc  t_{k+1}^{2H}- t_{k}^{2H}\rc
+\frac12 V_n^{(2)}(f")+\frac16 \tilde V_n^{(3)}(f^{(3)}).
\end{equation}
Now, invoking Proposition \ref{prop:weighted-sums}, it is readily
checked that both $V_n^{(2)}(f'')$ and $\tilde V_n^{(3)}(f^{(3)})$
converge to 0 in $L^2(\Omega)$ as $n\to\infty$. Hence
\begin{equation*}
L^2(\Omega)-\lim_{n\to\infty} \tilde S^{n,\di} =
\cj_{st}(f'(B) \, dB)- H \int_s^t f''(B_{u}) \, u^{2H-1} \, du,
\end{equation*}
which ends the proof.

\end{proof}

\smallskip

The aim of the computations above was to prove that the results of \cite{NTa} do not contradict ours for $H>1/4$. Note however the following points:

\smallskip

\noindent \emph{(i)} Having a look at Proposition
\ref{prop:cvge-wick-riemann-1}, one might think that the first order
Riemann-Wick sums $\tilde S^{n,\di}$ are always convergent in
$L^2(\oom)$. However, when $H< 1/4$, relation (\ref{eq:lim-V-n-2})
still holds true. This means that the term $V_n^{(2)}(f'')$
appearing in equation (\ref{eq:tilde-Sn-Sn}) is now divergent, due
to the fact that $2H-1/2<0$. The same kind of arguments also yield
the divergence of $\tilde V_n^{(3)}(f^{(3)})$ in
(\ref{eq:tilde-Sn-Sn}). It is thus reasonable to think that first
order Riemann-Wick sums will be divergent for $H<1/4$, justifying
our higher order expansions.

\smallskip

\noindent
\emph{(ii)} In light of Proposition \ref{prop:cvge-wick-riemann-1}, it is however possible that expansions of lower order than ours are sufficient to guarantee the convergence of sums like $S^{\Pi_{st},\di}$ in Theorem \ref{itosko1}. We haven't followed this line of investigation for sake of conciseness, but let us stress the fact that almost sure convergences of our Wick-Riemann sums are obtained in Theorem \ref{itosko1} and Theorem \ref{itosko-d} (for any sequence of partitions whose mesh tends to 0), while only $L^2(\oom)$ convergences are considered in Proposition \ref{prop:cvge-wick-riemann-1}.

\smallskip

\noindent
\emph{(iii)} It is also worth reminding that we aim at considering a general $d$-dimensional Gaussian process, while \cite{GRV,NTa} focus on 1-dimensional situations. It is an open question for us to know if the methods of the aforementioned papers could be easily adapted to a multidimensional process.

\subsection{Multidimensional case}
We shall now give the representation theorem for Skorohod's integral in the multidimensional case. Technically speaking, this will be an elaboration of the one-dimensional case, relying on tensorization and cumbersome notations.

\smallskip

We first need an analogous of Proposition \ref{basic} in the
multidimensional case, whose proof is also postponed to the Appendix. To this aim, let us introduce some additional notation: given $g_1,\,\ldots ,\,g_d\in\mathcal H$
define $\bar g=(g_1,\,\ldots,\,g_d)$ and denote by $\mu_{\bar g}$
the law in $\mathbb R^d$ of the random vector
$(I_1(g_1),\,\ldots,\,I_1(g_d))$.

\begin{proposition}\label{basic2} Using the notations  introduced  above,   let $g_1,\ldots,g_d,\,h_1\ldots,h_d\in\mathcal H$. Consider the random variables $X_1=I_1(g_1), \ldots, X_d=I_1(g_d)$, and $Y_1=I_1(h_1),
\ldots, Y_d=I_1(h_d)$. Suppose that $Y_1, \cdots, Y_d$ are independent and also that $X_j$ and $Y_k$ are independent for
$k\ne j$. Let $p=(p_1,\ldots,p_d)$ be a multiindex and set $|p|=\sum_{j=1}^{d}p_j$. Assume that $G\in\mathcal C^{|p|}(\mathbb R^d)$ is
such that $\partial^{\al}G\in L^r(\mu_{\bar g})$ for any multiindex $\alpha=(\alpha_1\ldots,\alpha_d)$  and for some $r>2$, with
$\alpha_k\le p_k$, $\,k=1,\ldots,d$. Then
$G(X_1, \cdots, X_d) \diamond Y_1^{\diamond p_1}\diamond \cdots\diamond Y_d^{\diamond
p_d}$ is well defined in $L^2(\Omega)$ and the following
formula holds:
\begin{multline}\label{eq:correc-wick-d}
G(X_1, \cdots, X_d) \diamond Y_1^{\diamond p_1}\diamond \cdots\diamond Y_d^{\diamond p_d}
=\sum_{ l_{1}+2m_1\le p_1}\!\!\cdots\!\!\sum_{ l_{d}+2m_d\le p_d}
\partial ^{l_1 ,\, \ldots,\, l_d }  G(X_1, \cdots, X_d) \\
\times\prod_{ k=1}^d\Big[\frac{(-1)^{ (m_k+ l_{k}) }  p_k! }
{  2^{m_k}m_k! l_{k}!
(p_k-2m_k-l_{k})!}\left(\EE(X_kY_k)\right)^{l_{k}}
\left(\EE(Y_k^2)\right)^{m_k}
Y_k^{p_k-2m_k-l_{k}}\Big]
 \,,
\end{multline}
where $\partial ^{j_1, \cdots, j_d}$  denotes
$\frac{\partial^{j_1+\cdots+j_d}}{\partial x_1^{j_1}\cdots\partial
x_d^{j_d}}$.
\end{proposition}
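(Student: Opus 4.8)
The plan is to reduce (\ref{eq:correc-wick-d}) to the one-dimensional Proposition \ref{basic} by an algebraic recursion, the whole point being that the independence hypotheses make the computation decouple coordinate by coordinate. Before anything else I would settle well-definedness of the left-hand side: the integrability assumption $\partial^\al G\in L^r(\mu_{\bar g})$ with $r>2$, for every multiindex $\al$ with $\al_k\le p_k$, ensures that $G(X_1,\ldots,X_d)\in\dom D^{|p|}$, and since $Y_1^{\di p_1}\di\cdots\di Y_d^{\di p_d}=I_{|p|}(h_1^{\hatotimes p_1}\hatotimes\cdots\hatotimes h_d^{\hatotimes p_d})$, part (1) of Proposition \ref{skoro} guarantees that $G(X)\di Y_1^{\di p_1}\di\cdots\di Y_d^{\di p_d}$ is a genuine element of $L^2(\Omega)$.

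The building block is the elementary relation, valid for any $F\in\dom D$ and obtained by combining Proposition \ref{skoro} (case $k=1$) with the integration-by-parts rule for the divergence,
$$F\di Y_k=F\,Y_k-D_{h_k}F,\qquad D_{h_k}F=\langle DF,h_k\rangle_\ch.$$
Since $Y_k^{\di p_k}=Y_k\di\cdots\di Y_k$ and the Wick product is commutative and associative, the left-hand side of (\ref{eq:correc-wick-d}) is produced by applying the single operation $\cdot\di Y_k$ exactly $p_k$ times, for each $k$, in an arbitrary order. Tracking a generic monomial $T=\partial^\al G(X)\prod_{j=1}^d Y_j^{a_j}$ (ordinary products) and using Leibniz' rule for $D_{h_k}$ together with the covariance evaluations
$$\langle g_j,h_k\rangle_\ch=\EE(X_jY_k)=\1_{(j=k)}\,\EE(X_kY_k),\qquad\langle h_j,h_k\rangle_\ch=\EE(Y_jY_k)=\1_{(j=k)}\,\EE(Y_k^2),$$
in which the independence hypotheses annihilate every cross term, I obtain the recursion
$$T\di Y_k=\partial^\al G(X)\,Y_k\prod_jY_j^{a_j}-\EE(X_kY_k)\,\partial^{\al+e_k}G(X)\prod_jY_j^{a_j}-a_k\,\EE(Y_k^2)\,\partial^\al G(X)\,Y_k^{a_k-1}\!\!\prod_{j\ne k}Y_j^{a_j},$$
where $e_k$ denotes the $k$-th unit multiindex.

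The structural observation I would then exploit is that $\cdot\di Y_k$ alters only the $k$-th derivative order $\al_k$ and the $k$-th power $a_k$, leaving the data carried by the other coordinates inert; as partial derivatives commute, the $d$ procedures $\{\cdot\di Y_k\}_{k=1}^d$ act on these monomials in mutually commuting ways. Hence applying $\cdot\di Y_k$ exactly $p_k$ times to the initial term $G(X)$ factorizes as a product over $k$, and within each coordinate the recursion is verbatim the one already solved in Proposition \ref{basic}: it generates the sum over $l_k+2m_k\le p_k$ with coefficient $\tfrac{(-1)^{m_k+l_k}p_k!}{2^{m_k}m_k!\,l_k!\,(p_k-2m_k-l_k)!}$, the scalar $[\EE(X_kY_k)]^{l_k}[\EE(Y_k^2)]^{m_k}$, a derivative bump $\partial^{l_k}$ in the $k$-th variable, and the ordinary power $Y_k^{p_k-2m_k-l_k}$. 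Multiplying the $d$ coordinate-wise expansions reproduces exactly (\ref{eq:correc-wick-d}).

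I expect the main difficulty to be the integrability bookkeeping rather than the combinatorics: one must verify that every intermediate monomial $\partial^\al G(X)\prod_jY_j^{a_j}$ remains in $\dom D$, so that the relation $F\di Y_k=FY_k-D_{h_k}F$ may legitimately be applied at each of the $|p|$ steps. This is precisely where the hypothesis $r>2$ enters, since the derivatives $\partial^\al G$ only lie in $L^r(\mu_{\bar g})$ whereas the polynomial factors $\prod_jY_j^{a_j}$ lie in every $L^s(\Omega)$, so that H\"older's inequality keeps all products in $L^2$ (and in $\dom D$) along the recursion. A secondary point demanding care is that the iterated single Wick multiplications really rebuild the full product $G(X)\di Y_1^{\di p_1}\di\cdots\di Y_d^{\di p_d}$; this rests on associativity together with the same $L^2$-closure argument already used in the proof of Proposition \ref{skoro}, and must be invoked whenever a chaos-expansion limit is taken.
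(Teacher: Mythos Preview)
Your recursive, coordinate-by-coordinate reduction is a genuinely different route from the paper's. The paper does not reduce to Proposition \ref{basic} but rather reruns its generating-function argument directly in $\R^d$: it starts with $G(x)=e^{\imath\langle\xi,x\rangle}$, uses the exponential-vector identity (\ref{e.4.5}) together with the independence hypotheses to expand a two-parameter generating function $M(\xi,\eta)$, reads off the coefficient of $\eta_1^{p_1}\cdots\eta_d^{p_d}$, and then passes to general $G$ through the trigonometric approximation of Lemma \ref{analyt} and closedness of $\delta^{\di|p|}$. Your approach is more transparent in that it explains structurally why the right-hand side of (\ref{eq:correc-wick-d}) factorizes over $k$: independence makes the recursions $\cdot\di Y_k$ in different coordinates commute, and each one reproduces exactly the one-dimensional pattern.

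There is, however, a real gap at the integrability step you yourself flag as delicate. To invoke $T\di Y_k=TY_k-D_{h_k}T$ you need $T=\partial^\al G(X)\prod_jY_j^{a_j}\in\dom D$, and membership in $\dom D$ requires the \emph{full} Malliavin derivative $DT\in L^2(\Omega;\ch)$, hence $\partial^{\al+e_j}G(X)\in L^2(\Omega)$ for \emph{every} $j$, not only $j=k$. But once all $p_j$ copies of $\cdot\di Y_j$ have already been applied for some $j\ne k$, intermediate monomials with $\al_j=p_j$ appear, and then $(\al+e_j)_j=p_j+1$ lies outside the range covered by the hypothesis on $G$. Independence does force the coefficient $\langle g_j,h_k\rangle_\ch$ of this uncontrolled derivative in $D_{h_k}T$ to vanish, but that is a statement about the value of $D_{h_k}T$ \emph{after} $T\in\dom D$ has been established, not a substitute for establishing it; your H\"older argument only yields $T\in L^2(\Omega)$, which is strictly weaker. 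The cleanest repair is the paper's own device: run your recursion first for $G$ a trigonometric polynomial, where every derivative is bounded and all intermediate monomials are trivially in $\dom D$, obtain (\ref{eq:correc-wick-d}) in that class, and then approximate via Lemma \ref{analyt}, using closedness of $\delta^{\di|p|}$ from Proposition \ref{skoro} to pass to the limit.
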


\smallskip

As in the one-dimensional case, the proposition above is the key ingredient in order to establish the following representation formula for Skorohod's integral:
\begin{theorem}\label{itosko-d}
Let $x$ be a $d$-dimensional centered Gaussian process with
continuous covariance function fulfilling Hypotheses \ref{hyp:cov-x}
and \ref{l2-integr}, and assume that $x$ also satisfies
Hypothesis~\ref{hyp:rough-intro}. Let $f$ be a function in
$C^{2N}(\R^d)$  such that $\partial_{\al} f$ verifies the growth
condition \gc for any multiindex $\al$ such that $|\al|\le 2N$. Then
the Skorohod integral $\ddi(\nabla f(x))$ (whose existence is
ensured by  Theorem \ref{ito2}) can be represented as ${\rm
a.s.}-\lim_{\Pi_{st}\to 0} S^{\Pi_{st},\di}$, where
$S^{\Pi_{st},\di}$ is defined by
$$
S^{\Pi_{st},\di}=\sum_{i=0}^{n-1} \sum_{k=1}^N\frac1{k!}\partial^{k}_{i_k,\ldots,i_1}
 f(x_{t_i}) \diamond \bx^{\1}_{t_{i} t_{i+1}}(i_1)\diamond\cdots\diamond
 \bx^{\1}_{t_{i} t_{i+1}}(i_k).
 $$
\end{theorem}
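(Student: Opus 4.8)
The plan is to follow the proof of Theorem \ref{itosko1} line by line, replacing the scalar correction formula of Proposition \ref{basic} by its multidimensional counterpart, Proposition \ref{basic2}. Fix a partition $\Pi_{st}=\{s=t_0,\dots,t_n=t\}$ and an index $i$, and set $X_j=x_{t_i}(j)$ and $Y_j=\bx^{\1}_{t_i t_{i+1}}(j)$ for $j=1,\dots,d$. First I would verify that Proposition \ref{basic2} applies to each summand $\frac1{k!}\partial^{k}_{i_k\ldots i_1}f(x_{t_i})\di \bx^{\1}_{t_i t_{i+1}}(i_1)\di\cdots\di\bx^{\1}_{t_i t_{i+1}}(i_k)$: the independence hypotheses hold because the coordinates of $x$ are i.i.d., so $Y_1,\dots,Y_d$ are independent and $X_j$ is independent of $Y_k$ whenever $j\neq k$; the integrability requirement $\partial^\al f\in L^r(\mu_{\bar g})$ for some $r>2$ follows from the growth condition \gc, exactly as in the one-dimensional case.

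Next I would reorganize the sum over tuples $(i_1,\dots,i_k)$ by coordinate direction: since the partial derivatives of $f$ are symmetric, if the direction $j$ occurs $p_j$ times (so $\sum_j p_j=k$), the inner sum collapses, via a multinomial coefficient that cancels the $\frac1{k!}$, into a sum over multiindices $p=(p_1,\dots,p_d)$ of terms $\big(\prod_j\tfrac1{p_j!}\big)\,\partial^{p_1}_{x_1}\cdots\partial^{p_d}_{x_d}f(X_1,\dots,X_d)\di Y_1^{\di p_1}\di\cdots\di Y_d^{\di p_d}$, to which \eqref{eq:correc-wick-d} applies. Two facts then make the resulting expression tractable. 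Since $\EE(X_jY_k)=0$ for $j\neq k$, only the diagonal pairings survive in \eqref{eq:correc-wick-d}. Performing, in each coordinate separately, the substitutions $q_j=p_j+l_j$ and $u_j=l_j+m_j$ analogous to those leading to \eqref{eq:simpl-S-k-i}, and using the coordinatewise identity
\begin{equation*}
\EE(X_jY_j)+\tfrac12\EE(Y_j^2)=\tfrac12\big[\EE(x_{t_{i+1}}(j)^2)-\EE(x_{t_i}(j)^2)\big]=\tfrac12\,\der R_{t_i t_{i+1}}
\end{equation*}
(valid because all coordinates share the same covariance $R$), I would rewrite $S^{\Pi_{st},\di}$ as a sum over multiindices of terms of the type $\partial^{q_1}_{x_1}\cdots\partial^{q_d}_{x_d}f(x_{t_i})\,\prod_j(\der R_{t_i t_{i+1}})^{u_j}\,\prod_j\big(\bx^{\1}_{t_i t_{i+1}}(j)\big)^{q_j-2u_j}$, weighted by a combinatorial constant that factorizes over the coordinates.

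The analysis of these terms splits into three cases, in complete parallel with the proof of Theorem \ref{itosko1}. When all $u_j=0$ the term reproduces the ordinary-product Riemann sum $S^{\Pi_{st}}$ (up to contributions of derivative order between $N+1$ and $2N$, which vanish by the H\"older regularity of $x$), so Theorem \ref{ito-strato-N} identifies its limit with $\cj_{st}(\nabla f(x)\,dx)=[\der f(x)]_{st}$. When $\sum_j u_j\geq 2$, or when $\sum_j u_j=1$ but some factor $\big(\bx^{\1}(j)\big)^{q_j-2u_j}$ with $q_j-2u_j\geq 1$ survives, the corresponding sums vanish in the limit, being dominated by $\max_{i}\{|\bx^{\1}_{t_i t_{i+1}}|^{q_j-2u_j},\,|\der R_{t_i t_{i+1}}|^{\,\sum_j u_j-1}\}\to 0$ times the bounded integral $\int_s^t|R'_\rho|\,d\rho$. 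The only surviving correction corresponds to $\sum_j u_j=1$ with all increments consumed, i.e. a single distinguished coordinate $j^\ast$ with $u_{j^\ast}=1$, $q_{j^\ast}=2$ and $q_j=u_j=0$ otherwise; the associated derivative is then the pure second derivative $\partial^2_{j^\ast j^\ast}f$ (no mixed partial can arise, since every $q_j=2u_j$ is forced to be even), and the factorized constant equals $-\tfrac12$. Summing over $j^\ast\in\{1,\dots,d\}$ assembles the Laplacian $\Delta f=\sum_j\partial^2_{jj}f$, and a dominated-convergence argument using the absolute continuity of $R$ shows this contribution converges to $-\tfrac12\int_s^t\Delta f(x_\rho)\,R'_\rho\,d\rho$.

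Combining the three cases yields ${\rm a.s.}$-$\lim_{|\Pi_{st}|\to0}S^{\Pi_{st},\di}=[\der f(x)]_{st}-\tfrac12\int_s^t\Delta f(x_\rho)\,R'_\rho\,d\rho$, and comparison with the Skorohod formula of Theorem \ref{ito2} identifies this limit with $\ddi(\nabla f(x))$. The main obstacle is purely combinatorial: one must organize the multiindexed expansion coming from \eqref{eq:correc-wick-d} so as to confirm that all cross-coordinate pairings drop out and that the unique surviving second-order correction reassembles precisely into the Laplacian. The bookkeeping is markedly heavier than in the scalar case, whereas the analytic estimates guaranteeing the vanishing of the remaining terms are inherited almost verbatim from the proof of Theorem \ref{itosko1}.
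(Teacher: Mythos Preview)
Your proposal is correct and follows essentially the same approach as the paper's own proof: reorganize the Wick-Riemann sum by multiindex $(p_1,\dots,p_d)$, apply Proposition~\ref{basic2}, perform the coordinatewise substitutions $q_j=p_j+l_j$ and $u_j=l_j+m_j$ together with the identity $\EE(X_jY_j)+\tfrac12\EE(Y_j^2)=\tfrac12\,\der R_{t_it_{i+1}}$, and then split into the three cases (all $u_j=0$; a single $u_{j^\ast}=1$ with $q_{j^\ast}=2$; the rest). One minor remark: your comment that ``only the diagonal pairings survive'' because $\EE(X_jY_k)=0$ for $j\neq k$ is already built into the statement of Proposition~\ref{basic2}, whose formula~\eqref{eq:correc-wick-d} contains only the diagonal covariances $\EE(X_kY_k)$; there is no extra simplification to perform at that step.
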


\begin{proof}
We mimic here the proof of Theorem \ref{itosko1}: decompose first $S^{\Pi_{st},\diamond}$ into $\sum_{i=0}^{n-1} \sum_{u=0}^N$ $\sum_{j_1+\cdots+j_d=u}\cs_{j_1,\ldots,j_d}^{i}$, where
\[
\cs_{j_1,\ldots,j_d}^{i}=
\frac{1}{j_1!\cdots j_d!}
\partial_{j_1, \ldots, j_d}^{u} f(x_{t_i}(1), \cdots, x_{t_i}(d) )\diamond
\big(\bx^{\1}_{t_{i} t_{i+1}}(1)\big)^{\diamond j_1}\diamond
\cdots\diamond \big(\bx^{\1}_{t_{i} t_{i+1}}(d)\big)^{\diamond j_d}\, .
\]
For a fixed $i\in\{0,\ldots,n-1\}$ and $k\in\{0,\ldots,d\}$, set now
\begin{equation*}
X_k=x_{t_i}(k),
\quad\mbox{and}\quad
Y_k=\bx^{\1}_{t_{i} t_{i+1}}(k).
\end{equation*}
As in the one dimensional case, it is readily checked that if $\partial_\al f$ satisfies the  growth condition \gc for any $|\al|\le 2N$, then the integrability conditions of Proposition \ref{basic2} are also fulfilled for $x_{t_i}=(x_{t_i}(1),\ldots,x_{t_i}(d))=(I_1(\1_{[0,t_i)}^{[1]}),\ldots,I_1(\1_{[0,t_i)}^{[d]}))$. This allows to write
\begin{multline*}
\sum_{u=1}^N \sum_{j_1+\cdots+j_d=u} \cs_{j_1,\ldots,j_d}^{i}=
\sum_{u=1}^N \sum_{j_1+\cdots+j_d=u} \,\sum_{ l_{1}+2m_1\le
j_1}\cdots\sum_{ l_{d}+2m_d\le j_d}
\partial_{l_1+j_1 , \cdots, l_d+j_d } f(X_1, \cdots, X_d) \\
\times\left( \prod_{k=1}^d\frac{(-1)^{(m_k+l_{k}) } } {
2^{m_k}m_k! l_{k}! (j_k-2m_k-l_{k})!}
\left.\EE(X_kY_k)\right)^{l_{k}} \left(\EE(Y_k^2)\right)^{m_k}
Y_k^{j_k-2m_k-l_{k}}\right).
\end{multline*}

Making substitution $l_k +j_k =q_k$ for $k=1,2, \cdots, d$, or
$j_k=q_k-l_k $,  the condition $l_k +2m_k\le j_k$ can be written as
$l_k+m_k=u_k$ with $0\le u_k\le q_k/2$ and therefore the same kind of manipulations as in (\ref{eq:simpl-S-k-i}) yield
\begin{multline*}
\sum_{u=1}^N \sum_{j_1+\cdots+j_d=u} \cs_{j_1,\ldots,j_d}^{i}
= \sum_{1\le q_1+\cdots+q_d\le 2N} \partial_{q_1 , \cdots, q_d} f(X_1, \cdots, X_d) \\
\times \prod_{k=1}^ d \left( \sum_{u_k=0}^{[q_k/2]}
   \frac{ (-1)^{ u_k  } }
{    u_k!   (q_k-2u_k )!}
\left(\EE(X_kY_k)+\frac{\EE(Y_k^2)}{2} \right)^{u_k} Y_k^{q_k-2u_k
}\right)
 \,.
\end{multline*}
Furthermore, like in the proof of Theorem \ref{itosko1}, we have $\EE(X_kY_k)+\frac{\EE(Y_k^2)}{2}=\der R_{t_{i}t_{i+1}}/2$. Summing over $i\in\{i,\ldots,n-1\}$ we thus end up with
\begin{eqnarray*}
S^{\Pi_{st},\di}&=& \sum_{i=0}^{n-1} \sum_{1\le q_1+\cdots+q_d\le N}
\partial_{q_1 , \cdots, q_d} f(x_{t_i}(1) , \cdots, x_{t_i}(d))
\prod_{k=1}^d    \frac{ 1 }
{      (q_k  )!}\big( \bx^{\1}_{t_i, t_{i+1}}(k)\big) ^{q_k  }\\
&& + \sum_{i=0}^{n-1} \,\,\sum_{N+1\le q_1+\cdots+q_d\le 2N}
\partial_{q_1 , \cdots, q_d} f(x_{t_i}(1) , \cdots, x_{t_i}(d))
  \prod_{k=1}^d  \frac{ 1 }
{      (q_k  )!} \big(\bx^{\1}_{t_i, t_{i+1}}(k)\big) ^{q_k  }\\
& &  -\frac12 \sum_{i=0}^{n-1} \sum_{k=1}^d
\partial_{kk}  ^2   f(x_{t_i}(1) , \cdots, x_{t_i}(d))
\left(R_{t_{i+1}}-R_{t_{i}}   \right)
+\tilde \Theta^\Pi_{st}\\
&:=& \Theta^{\Pi_{st}} _1+\Theta_2^{{\Pi_{st}}}+ \sum_{k=1}^d
\Theta_{3, k}^{\Pi_{st}}+\tilde \Theta^{{\Pi_{st}}}\,.
\end{eqnarray*}
In the last sum, the variables $\Theta$ correspond to the 3 cases we have distinguished in the proof of Theorem \ref{itosko1}: $\Theta_1^{{\Pi_{st}}}$ denotes the sums in which the $u_k$
are equal to $0$ and $1\le q_1+\cdots+q_d\le N$;
$\Theta_2^{{\Pi_{st}}}$ are the terms with $u_k=0$ and $N+1\le q_1+\cdots+q_d\le
2N$; $\Theta_{3, k}^{{\Pi_{st}}}$ corresponds to the terms with  $q_k=2$,
$q_j=0$ for all $j\ne k$ and $u_k=1$ (so that $u_j=0$ if $j\ne k$).
Finally,
$\tilde \Theta^{\Pi_{st}}$ denotes the sums with either
$u_1+\cdots+u_d\ge 2$ or some $u_k=1$ (and $u_j=0$ for
$j\not=k$) but $q_k\ge 3$. Referring again to the proof of Theorem \ref{itosko1}, it is then easy to argue that $\tilde
\Theta^{\Pi_{st}}$ and $\Theta^{\Pi_{st}}_2$ converge  to $0$,
$\Theta_1^{\Pi_{st}}$ converges to $\int_s^t \langle\nabla f(x_\rho), dx_\rho\rangle_{\R^d}$,
and $\sum_{k=1}^d \Theta_{3, k}^{\Pi_{st}}$ converges to $
 -\frac12 \int_s^t \Delta f(x_\rho) R_\rho'\, d\rho $.

\end{proof}

\section{Appendix}
In this Appendix, we prove Propositions \ref{basic} and
\ref{basic2}.   For this, we will need  the  following analytical lemma.
\begin{lemma}\label{analyt}
Let $\mu$ be a  finite measure on $(\mathbb R^d ,\,\mathcal
B(\mathbb R^d))$ and let $G\in\mathcal C^p(\mathbb R^d )$ be such
that $\partial^{\al}G\in L^r(\mu)$ for some $r\ge 1$ and any
multiindex $\al$ such that $|\al|:=\sum_{j=1}^d\alpha_j\le p$. Then,
there exists a sequence $(G_n)_{n\in\mathbb N}$ such that

\smallskip

\noindent
\emph{(1)} Each $G_n$ is a trigonometric
polynomial of several variables, that is
$G_n(x_1,\ldots,x_d)={\sum_{\rm{
finite}}}a_{l_1,\ldots,l_d}^ne^{\imath\xi^n_{l_1}x_1+\cdots+\imath\xi^n_{l_d}
x_d}$, and where $a^n_{l_1,\ldots,l_d}$ and $\xi_{l_j}^n$ are real
numbers.

\smallskip

\noindent
\emph{(2)} We have $\lim_{n\to\infty}\partial^{\al}G_n=\partial^{\al}G$ in $L^r(\mu)$ for any $\al$ such that $|\al|\le p$.
\end{lemma}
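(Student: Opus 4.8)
The plan is to reduce the general $C^p$ function $G$ to a compactly supported one, then periodize it and approximate by Fej\'er means, using the finiteness of $\mu$ throughout to upgrade uniform estimates into $L^r(\mu)$ estimates. First I would carry out a truncation. Fix a smooth cut-off $\chi_1$ equal to $1$ on $[-1,1]^d$ and supported in $[-2,2]^d$, and set $\chi_M(x)=\chi_1(x/M)$, so that $\norm{\partial^\beta \chi_M}_\infty \le C$ uniformly in $M\ge 1$ and $\chi_M\equiv 1$ on $[-M,M]^d$. Writing $G_M=\chi_M G$ and expanding $\partial^\al(\chi_M G)$ by the Leibniz rule, every term of $\partial^\al G_M-\partial^\al G$ is supported in $\R^d\setminus[-M,M]^d$ and dominated, uniformly in $M$, by $C\sum_{\gamma\le\al}|\partial^\gamma G|$, which lies in $L^r(\mu)$ by hypothesis. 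Since the integrand tends to $0$ pointwise as $M\to\infty$, dominated convergence gives $\norm{\partial^\al G_M-\partial^\al G}_{L^r(\mu)}\to 0$ for each of the finitely many $\al$ with $|\al|\le p$. Hence it suffices to approximate a compactly supported $G_M\in\cac^p(\R^d)$, say with support in $[-2M,2M]^d$.

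Next I would periodize and apply Fej\'er summation. Choose $L>2M$ and let $\tilde G_M$ be the $2L$-periodic extension of $G_M|_{[-L,L]^d}$; since $G_M$ and all its derivatives up to order $p$ vanish near $\partial[-L,L]^d$, the extension $\tilde G_M$ belongs to $\cac^p$ on the torus $(\R/2L\Z)^d$. Let $K_n$ denote the $d$-dimensional product Fej\'er kernel with frequencies in $\frac{\pi}{L}\Z^d$ and set $G_n=\tilde G_M * K_n$, a trigonometric polynomial of the required form with real frequencies $\xi^n_{l_j}\in\frac{\pi}{L}\Z$. Differentiating under the convolution gives $\partial^\al G_n=(\partial^\al \tilde G_M)*K_n$; the positivity and unit mass of the Fej\'er kernel then yield both the uniform bound $\norm{\partial^\al G_n}_\infty\le\norm{\partial^\al \tilde G_M}_\infty=:C_\al$, uniformly in $n$, and the uniform convergence $\partial^\al G_n\to\partial^\al\tilde G_M$ on the torus for all $|\al|\le p$.

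Finally I would combine the estimates through a diagonal argument. Given $\ep>0$, fix $M$ from the truncation step so that $\norm{\partial^\al G_M-\partial^\al G}_{L^r(\mu)}<\ep$ for all $|\al|\le p$, then choose $L$ so large that $C_\al\,\mu(\R^d\setminus[-L,L]^d)^{1/r}<\ep$. Splitting $\norm{\partial^\al G_n-\partial^\al G_M}_{L^r(\mu)}$ over $[-L,L]^d$ and its complement, the inner part is bounded by the uniform error of $G_n$ on $[-L,L]^d$ times $\mu(\R^d)^{1/r}$, which tends to $0$ as $n\to\infty$, while on the complement $\partial^\al G_M=0$ and the contribution is at most $C_\al\,\mu(\R^d\setminus[-L,L]^d)^{1/r}<\ep$; together with the triangle inequality this makes $\norm{\partial^\al G_n-\partial^\al G}_{L^r(\mu)}<3\ep$ for $n$ large. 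Running this for $\ep=1/j$ and relabelling produces the desired sequence. The main obstacle is exactly the clash between the compulsory periodicity of trigonometric polynomials and the decay of $G_M$: the periodic copies of $G_n$ on $\R^d\setminus[-L,L]^d$ cannot be made to vanish, and the only way to neutralize them is the uniform-in-$n$ sup bound coming from the positivity of the Fej\'er kernel (a Dirichlet partial sum would fail here), combined with the smallness of $\mu$ far from the origin.
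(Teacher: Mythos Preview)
Your argument is correct and follows the same overall strategy as the paper: reduce to a compactly supported function, approximate it by trigonometric polynomials on a torus containing its support, and absorb the unavoidable periodic copies outside that torus using the finiteness of $\mu$. The paper's sketch differs from yours only in two technical choices. First, the paper inserts an extra mollification step to pass from $G\in\cac^p$ to a $C^\infty$ compactly supported function, whereas you stay at the $\cac^p$ level via a cutoff and Leibniz. Second, having gained $C^\infty$ regularity, the paper can use ordinary Fourier partial sums (Dirichlet) and still get uniform convergence of all needed derivatives, while you use Fej\'er means and exploit the positivity of the kernel to obtain the uniform-in-$n$ sup bound directly from $\cac^p$ data. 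Your remark that Dirichlet sums ``would fail here'' is correct for \emph{your} setup (no extra smoothness), but not for the paper's, where the preliminary $C^\infty$ reduction makes Dirichlet sums work just as well. Either route is fine; yours is slightly more self-contained because it avoids the mollification and makes the role of the tail condition $\mu(\R^d\setminus[-L,L]^d)\to 0$ completely explicit.
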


\begin{proof} This lemma is folklore, but we haven't been able to find it in any standard text
book. For this reason and for the sake of completeness, we  give
here the main ideas of its proof. First, given $G\in \mathcal
C^p(\mathbb R^d )$, there exists a sequence of $\mathcal
C^{\infty}(\mathbb R^d )$ functions with  compact support that
converge, jointly with their derivatives, to $G$ in $L^r(\mu)$. So,
one only needs to approximate a function $G\in \mathcal
C^{\infty}(\mathbb R^d )$ with  support contained in a rectangle of
$\mathbb R^d$, say  $K$. Moreover, given $\ep>0$,  we can suppose
that $\mu(K^c)<\ep$. For a such function, consider its Fourier
partial sums on the rectangle $K$ that converge uniformly, jointly
with their derivatives to $G$ and its derivatives. Since these
partial sums are periodic functions with the same period, their
$\sup$-norm on all $\mathbb R^d$ is the same that the $\sup$-norm on
the compact $K$. With these ingredients, the result is easily
obtained.

\end{proof}

\begin{proof}[Proof of  Proposition \ref{basic}]

We start with
$G(x)=e^{\imath\xi x}$, for an arbitrary $\xi\in\mathbb R$, which means that we wish to evaluate the Wick product $e^{\imath\xi X}\diamond Y^{\diamond p}$.

\smallskip

Recall that $X=I_1(g)$ and $Y=I_1(h)$. For $\xi,\eta\in\R$, consider the random variable
\begin{eqnarray*}
M(\xi,\eta)
&=&\exp\lp{-\frac{\xi^2}{2}\EE(X^2) }\rp \ce\lp  \imath \xi g +\imath \eta h\rp
=\exp\lp{-\frac{\xi^2}{2}\EE(X^2) }\rp \ce\lp  \imath \xi g \rp \di \ce\lp  \imath \eta h\rp  \\
&=&\exp\lp  \imath \xi X\rp \di \ce\lp  \imath \eta h\rp
=\sum_{p=0}^{\infty} \frac{\imath ^p \eta^p}{p!} \exp\lp  \imath \xi X\rp \di Y^{\di p},
\end{eqnarray*}
where we have invoked relation (\ref{e.4.5}) for the second equality and relation (\ref{eq:expansion-wick-exp}) for the last one. It is thus obvious that $e^{\imath\xi X}\diamond Y^{\diamond p}$ can be expressed as
\begin{equation*}
\frac{p!}{i^p} \times \ \hbox{the coefficient of $\eta^p$ in the expansion of } M(\xi,\eta).
\end{equation*}
We now proceed to this expansion: we have
\begin{eqnarray*}
M(\xi,\eta)&=&\exp\left\{ \imath\xi X +\imath\eta Y +\frac{\eta^2}{2}\EE(Y^2)+\xi \eta \EE(XY)\right\}\\
&=& e^{\imath\xi X}\sum_{k=0}^\infty \frac{(\imath\eta Y)^k}{k!}
\sum_{m=0}^\infty \frac{\eta^{2m} }{2^m m!} \left[\EE(Y^2) \right]^m
\sum_{l=0}^\infty \frac{\xi^l\eta^l}{l!}\left[\EE(XY)\right]^l\,.
\end{eqnarray*}
Hence, by computing the coefficient of $\eta^p$ in the above expression, it is easily checked that
\begin{align*}
&e^{\imath\xi X}\diamond Y^{\diamond p} = e^{\imath\xi X} Y^{
p}+e^{\imath\xi X} \sum_{0<l+2m\le p} \frac{\imath^{-2m-l}\xi^l
p!}{2^m m!\, l!\,(p-2m-l)!}\left[\EE(XY)\right]^l
\left[\EE(Y^2)\right]^m Y^{p-2m-l}\\
&=  e^{\imath\xi X} Y^{ p}+\sum_{0<l+2m\le p} \left(\frac{d^l}{dx^l}
e^{\imath\xi X}\right)
\frac{(-1)^{m+l}   p!}{2^m m!\, l!\,(p-2m-l)!}\left[\EE(XY)\right]^l
\left[\EE(Y^2)\right]^m Y^{p-2m-l},
\end{align*}
which is the desired formula (\ref{eq:correc-wick-1}) for $G(x)=e^{\imath\xi x}$.

\smallskip

Let us now see how to extend this relation to a more general function $G$.
By linearity, we first
obtain the result for any trigonometric polynomial $G$. Now, let $G$
be such that $G^{(j)}\in L^r(\mu_g)$ for any $j=0,\ldots,p$ and some
$r>2$. By Lemma \ref{analyt}, there exists a sequence
$(G_n)_{n\in\mathbb N}$  of trigonometric polynomials such that
$$G_n^{(j)}\rightarrow G^{(j)}\quad\text{in } L^r(\mu_g)\quad\text{for
any }j=0,\ldots, p.$$ This implies that $G(X)\in {\rm Dom}(D^p)$ and
that
$$D^j G(X)=G^{(j)}(X)g^{\otimes j}\quad\text{for any }
\,j=0,\ldots, p.$$
Indeed, $G_n(X)\in\mathbf{S}$ and
$D^j G_n(X)=G_n^{(j)}(X)g^{\otimes j}$ for any $j=0,\ldots, p$. Moreover, since
$$E[|G_n^{(j)}(X)-G^{(j)}(X)|^r]=\|G^{(j)}_n-G^{(j)}\|^r _{L^r(\mu_g)}$$
we have that
$$D^j G_n(X)\rightarrow G^{(j)}(X)g^{\otimes j}\quad \text{in } L^r (\Omega;\mathcal H^{\otimes j}).$$
Using that the $D^{(j)}$ are closed operators we obtain that
$G(X)\in {\rm Dom}(D^p)$ and that $D^j G(X)=G^{(j)}(X)g^{\otimes
j}\quad\text{for any } \,j=0,\ldots, p.$ In particular, owing to Proposition \ref{skoro} we have that
\begin{equation}\label{eq:wick-G-X-Y-p}
G(X)\diamond Y^{\diamond p}=G(X)\diamond I_p(h^{\otimes
p})=\delta^{\di p} (G(X)h^{\otimes p}).
\end{equation}

\smallskip

Let us go back now to our approximating sequence $(G_n)_{n\in\mathbb N}$. It is readily checked that  relation (\ref{eq:wick-G-X-Y-p}) also holds for any $G_n$. Thus, putting together the relation  $G_n(X)\diamond I_p(h^{\otimes
p})=\delta^{\di p} (G_n(X)h^{\otimes p})$ with equation (\ref{eq:correc-wick-1}) for a trigonometric polynomial, we get that
\begin{eqnarray}\label{www}
&&\!\!\!\!\!\!\!\!\!\!\!\!\!\!\!\!\!\!\!\!\!\delta^{\di p} (G_n(X)g^{\otimes p})\notag\\
&=&  G_n(X)   Y^{ p}\!\!+\!\!\!\!\sum_{0<l+2m\le p}\!\!\!
\frac{(-1)^{m+l}   p!}{2^m m!\,l! (p\!-2m\!-l)!} G_n^{(l)} (X) \left[\EE(XY)\right]^l
\left[\EE(Y^2)\right]^m Y^{p-2m-l}.
\end{eqnarray}
Since $G_n^{(l)}(X)\rightarrow G^{(l)}(X)$ in $L^r(\Omega)$ with
$r>2$ and the $Y^{k-2m-l}$ belong to all the $L^q(\Omega)$, the
right-hand side of (\ref{www}) converges in $L^2(\Omega)$, as
$n\to\infty$, to
$$G(X)   Y^{ p}+\sum_{0<l+2m\le
p}
\frac{(-1)^{m+l}   p!}{2^m m!\,l!\, (p-2m-l)!} G^{(l)} (X) \left[\EE(XY)\right]^l
\left[\EE(Y^2)\right]^m Y^{p-2m-l}\,.$$
Finally we obtain the general case of equation (\ref{eq:correc-wick-1}) by taking limits in both sides of equation (\ref{www}) and by resorting to the closeness of the operator $\delta^{\di p}$.

\end{proof}

\smallskip

As in the previous section, the extension of Proposition \ref{basic} to the multidimensional case is now an elaboration of the previous computations relying on some notational technicalities.

\begin{proof}[Proof of Proposition \ref{basic2}]
As for Proposition \ref{basic}, we first
consider $G(x)=e^{\imath\langle \xi,\, x\rangle}$, where $x=(x_1,\ldots,x_d)$ and
$\xi=(\xi_1,\ldots,\xi_d)$ are arbitrary vectors in $\R^d$. The extension
of the formula to a $G$ satisfying  the general integrability conditions of our hypotheses is then obtained
following the same approximation scheme as in the one-dimensional case, and is left to the reader for sake of conciseness.

\smallskip

In order to treat the case of $G(x)=e^{\imath\langle \xi,\, x\rangle}$, set
\begin{equation*}
M(\xi,\eta)=
\exp\lp  \imath\langle \xi, \, X\rangle\rp
\di \exp\lp  \imath\langle \eta, \, Y\rangle+ \frac12 \sum_{k=1}^{d}\eta_k Y_k^2\rp.
\end{equation*}
Along the same lines as for Proposition \ref{basic}, one can then identify $e^{\imath\langle \xi, X\rangle}\diamond Y_1^{\diamond p_1}\diamond\cdots \diamond Y_d^{\diamond p_d}$ with  $\frac{p_1!\cdots p_d!}{i^{p_1+\cdots+p_d}} \times
$ the coefficient of $\eta_1^{p_1} \cdots \eta_d^{p_d}$ in the expansion of $M(\xi,\eta)$. Moreover, thanks to relation (\ref{e.4.5}) and invoking the fact that $X_j$ and $Y_k$ are independent for $k\ne j$, we get that
\begin{equation*}
M(\xi,\eta)=
\exp\left( \imath\sum_{k=1}^d \xi_k X_k +\imath\sum_{k=1}^d \eta_k Y_k
+\frac12 \sum_{k=1}^d \eta_k^2\EE(Y_k^2) +\sum_{k=1}^ d \xi_k\eta_k
\EE(X_kY_k)
\right ).
\end{equation*}
Expanding now the exponential according to formula (\ref{eq:expansion-wick-exp}), we end up with
\begin{multline*}
M(\xi,\eta)= \sum_{p_1, \cdots, p_d=0}^\infty  \Big[ \sum_{
l_{1}+2m_1\le p_1}\! \cdots\!\sum_{ l_{d}+2m_d\le p_d}
\,\,\prod_{k=1}^d \Big\{ (\imath\xi_k) ^{l_{k}} e^{\imath\xi_k X_k}
\frac{\imath^{ (p_k-2m_k-2l_{k})} }
{  2^{m_k}m_k! l_{k}! (p_k-2m_k-l_{k})!}\\
\times\left(\EE(X_kY_k)\right)^{l_{k}}
\left(\EE(Y_k^2)\right)^{m_k}
 Y_k^{p_k-2m_k-l_{k}}\Big\}\Big]
\eta_1^{p_1} \cdots \eta_d^{p_d}\,.
\end{multline*}
Taking into account the fact that $\prod_{k=1}^d (\imath\xi_k)
^{l_{k}} e^{\imath\sum_{k=1}^d \xi_k X_k}=\partial ^{
l_{1},\,\ldots\,,\, l_{d}}e^{\imath\sum_{k=1}^d \xi_k X_k}$, our
formula~(\ref{eq:correc-wick-d}) is now easily deduced, which ends
the proof.

\end{proof}

\end{document}